\newtheorem{theorem}{Theorem}[section]
\newtheorem{corollary}[theorem]{Corollary}
\newtheorem{example}[theorem]{Example}
\newtheorem{lemma}[theorem]{Lemma}
\numberwithin{equation}{section}
\begin{document}
\title[Bottom spectrum of three-dimensional manifolds]{Bottom spectrum of
three-dimensional manifolds with scalar curvature lower bound}
\author{Ovidiu Munteanu and Jiaping Wang}

\begin{abstract}
A classical result of Cheng states that the bottom spectrum of complete
manifolds of fixed dimension and Ricci curvature lower bound achieves its
maximal value on the corresponding hyperbolic space. The paper establishes
an analogous result for three-dimensional complete manifolds with scalar
curvature lower bound subject to some necessary topological assumptions. The
rigidity issue is also addressed and a splitting theorem is obtained for
such manifolds with the maximal bottom spectrum.
\end{abstract}

\address{Department of Mathematics, University of Connecticut, Storrs, CT
06268, USA}
\email{ovidiu.munteanu@uconn.edu}
\address{School of Mathematics, University of Minnesota, Minneapolis, MN
55455, USA}
\email{jiaping@math.umn.edu}
\maketitle

\section{Introduction}

The spectrum of the Laplacian $\Delta $ of a complete manifold $M$ is a
closed subset of the half line $[0,\infty ).$ Its smallest value is called
the bottom spectrum of $M$ and denoted by $\lambda _{1}(M).$ Alternatively,
it is characterized as the optimal constant of the Poincar\'{e} inequality

\begin{equation*}
\lambda _{1}\left( M\right) \,\int_{M}\varphi ^{2}\leq \int_{M}\left\vert
\nabla \varphi \right\vert ^{2},
\end{equation*}%
for all smooth functions $\varphi $ with compact support.

A classical comparison result due to Cheng \cite{C} provides a sharp upper
bound for the bottom spectrum $\lambda_1(M)$ of an $n$-dimensional complete
manifold $M$ with its Ricci curvature $\mathrm{Ric}\geq -\left( n-1\right) K$
for some nonnegative constant $K.$ Namely,

\begin{equation*}
\lambda _{1}\left( M\right) \leq \frac{\left( n-1\right) ^{2}}{4}K.
\end{equation*}

Naturally, one may wonder if the result remains true under a scalar
curvature lower bound. The answer is obviously negative in the case $n\geq
4. $ Indeed, the product manifold $\mathbb{H}^{n-2}\times \mathbb{S}^2(r)$
of the hyperbolic space $\mathbb{H}^{n-2}$ with the sphere $\mathbb{S}^2(r)$
of radius $r$ has positive bottom spectrum, yet its scalar curvature can be
made into an arbitrary positive constant by choosing $r$ small. We are thus
led to consider the case of dimension three.

\begin{theorem}
\label{A1} Let $\left( M,g\right) $ be a three-dimensional complete
Riemannian manifold with scalar curvature $S\geq -6K$ on $M$ for some
constant $K\geq 0.$ Assume that the Ricci curvature of $M$ is bounded from
below by a constant. Then the bottom spectrum of $M$ satisfies $\lambda
_{1}\left( M\right) \leq K$ provided that either of the topological
assumptions (A) or (B) is satisfied.\newline
(A) The second homology group $H_{2}\left( M,\mathbb{Z}\right) $ of $M$
contains no spherical classes;\newline
(B) $M$ has finitely many ends and its first Betti number $b_{1}(M)<\infty .$
\end{theorem}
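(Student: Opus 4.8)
The plan is to argue by contradiction: suppose $\lambda_{1}(M)>K$ and fix $\mu$ with $K<\mu<\lambda_{1}(M)$. Since $\mu<\lambda_{1}(M)$, exhausting $M$ by smooth compact domains and solving the Dirichlet problem for $\Delta+\mu$ on each of them (solvable since $\mu$ is below the first Dirichlet eigenvalue), then normalizing and passing to a limit, produces a positive function $f$ on $M$ with $\Delta f=-\mu f$. Because $\mathrm{Ric}$ is bounded below, the Cheng--Yau gradient estimate applied to $\log f$ gives a uniform bound $|\nabla\log f|\le C$, so the conformal metric $\bar g:=f^{4}g$ is uniformly equivalent to $g$ on geodesic balls of bounded radius (it may, however, fail to be complete near infinity). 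The exponent $4$ is chosen to exploit the three-dimensional conformal Laplacian $L_{g}=-8\Delta_{g}+S_{g}$: since $\bar g=f^{4}g$,
\[
S_{\bar g}=f^{-5}L_{g}f=f^{-5}\bigl(8\mu f+S_{g}f\bigr)=f^{-4}\bigl(8\mu+S_{g}\bigr)\ \ge\ f^{-4}\bigl(8\mu-6K\bigr)\ >\ 2K\,f^{-4}\ \ge\ 0,
\]
using $S_{g}\ge-6K$ and $\mu>K\ge 0$. Hence $(M,\bar g)$ has strictly positive scalar curvature.

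The second ingredient is the classical observation of Schoen--Yau and Fischer-Colbrie--Schoen: a closed two-sided stable minimal surface $\Sigma$ in a three-manifold of positive scalar curvature satisfies $\chi(\Sigma)>0$, so it is a sphere (or, in the non-orientable setting, $\mathbb{RP}^{2}$). Indeed, inserting the Gauss equation into the stability inequality $\int_{\Sigma}|\nabla\phi|^{2}\ge\int_{\Sigma}\bigl(|A|^{2}+\mathrm{Ric}_{\bar g}(\nu,\nu)\bigr)\phi^{2}$ and taking $\phi\equiv 1$ gives
\[
2\pi\chi(\Sigma)=\int_{\Sigma}K_{\Sigma}\,dA_{\bar g}\ \ge\ \tfrac12\int_{\Sigma}S_{\bar g}\,dA_{\bar g}\ >\ 0.
\]
The heart of the argument is therefore to produce, from the topological hypothesis, a closed two-sided stable minimal surface in $(M,\bar g)$ that is \emph{not} a sphere; that is the contradiction. (Pulled back to the metric $g$, the estimate reads $2\pi\chi(\Sigma)\ge\tfrac12\int_{\Sigma}(8\mu+S_{g})\,dA_{g}\ge(4\mu-3K)\,\mathrm{Area}_{g}(\Sigma)>0$, so one may equivalently work on $(M,g)$ with the weighted area $\Sigma\mapsto\int_{\Sigma}f^{4}\,dA_{g}$.)

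Under hypothesis (A), if $H_{2}(M,\mathbb{Z})\ne 0$, I would fix a nonzero class $\sigma$ and minimize $\bar g$-area among integral cycles representing $\sigma$; the local equivalence of $\bar g$ and $g$ and the Ricci lower bound (to prevent collapse and to apply the compactness theory of area-minimizing currents) should yield a smooth embedded minimizer, each component of which is two-sided and stable, hence a sphere by the previous step, forcing $\sigma$ (or a summand of it) to be spherical, against (A). Under hypothesis (B), one uses that $\lambda_{1}(M)>0$ makes every end non-parabolic: if $M$ has at least two ends, the Li--Tam construction provides a bounded nonconstant harmonic function of finite Dirichlet energy separating the ends, and minimizing $\bar g$-area in the resulting end-separating locally finite homology class again produces a stable minimal surface which must be a sphere, while finiteness of the number of ends and of $b_{1}(M)$ is what rules this out topologically and keeps the minimization from degenerating. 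The remaining configurations --- $H_{2}(M,\mathbb{Z})=0$ under (A), or $M$ one-ended with $b_{1}(M)=0$ under (B) --- are not governed by a homologically pinned surface and need a separate treatment: here one can run a Gromov-type $\mu$-bubble construction inside a band exhausting the end of $M$, the band being ``wide enough'' precisely because $\mu>K$, so that the $\mu$-bubble stability inequality again forces a spherical cross-section in conflict with the topology of the end; alternatively one first applies the Li--Wang structure theory for manifolds with $\lambda_{1}>0$ and $\mathrm{Ric}$ bounded below to reduce these cases either to a splitting (then disposed of by Cheng's two-dimensional estimate) or to the presence of the needed topology.

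The main obstacle is exactly this construction of the stable minimal surface. Since $\bar g$ may be incomplete at infinity (the eigenfunction $f$ can decay there), one cannot naively minimize area in a homology class and expect the minimizer to stay in a compact region; the Ricci lower bound on $g$ together with the gradient bound on $f$ must be used to obtain interior a priori estimates, and the topological hypotheses guarantee both that the relevant class is non-trivial at infinity and that the limiting object is a genuine surface of the expected topological type. Two minor technical points are the reduction to the oriented two-sided setting (passing to the oriented double cover preserves $\lambda_{1}$ and the curvature bounds) and, if a non-compact minimal surface of finite total curvature should arise, the use of the Cohn--Vossen inequality in place of Gauss--Bonnet. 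Once any non-spherical closed two-sided stable minimal surface is produced, positivity of $S_{\bar g}$ gives the contradiction, and therefore $\lambda_{1}(M)\le K$.
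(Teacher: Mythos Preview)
Your approach is genuinely different from the paper's, and the conformal trick $\bar g=f^{4}g$ with $\Delta f=-\mu f$ really does make $S_{\bar g}>0$; the difficulty is that you never actually close the argument. The paper does not produce a minimal surface at all. It works with a barrier harmonic function $f$ on $M\setminus D$, sets $u=f\psi$ with a Schoen--Yau cutoff $\psi$ so that the positive level sets $l(t)$ are compact, and integrates the Bochner formula using the Schoen--Yau/Stern identity
\[
\mathrm{Ric}(\nabla u,\nabla u)|\nabla u|^{-2}=\tfrac12 S-\tfrac12 S_{l(t)}+\text{(Hessian terms)}.
\]
Hypotheses (A) and (B) enter only through Gauss--Bonnet, to bound $\int_{l(t)\cap\overline{M_t}}S_t\le C$ uniformly in $t$: under (A), at most one component of $l(t)\cap\overline{M_t}$ can be a sphere; under (B), a Mayer--Vietoris count shows $l(t)\cap\overline{M_t}$ has exactly $m$ components. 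Combining this with the Poincar\'e inequality and the Kato inequality yields $\int_{M\setminus D}|\nabla u|^{3}u^{-2}\phi^{2}\le 8K^{3/2}\int_{M\setminus D}u\phi^{2}+C$ and then $\int f\le C$ on arbitrarily large balls, contradicting $\lambda_1>K$.

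Your proposal has real gaps. First, under (B) the contradiction you assert does not exist: if the end-separating $\bar g$-area minimizer is a sphere, nothing in ``finitely many ends and $b_1<\infty$'' forbids that (think of $\mathbb{S}^{2}\times\mathbb{R}$). Hypothesis (B) is a counting device in the paper, not an obstruction to spherical classes. Second, under (A) your argument only bites when $H_{2}(M,\mathbb{Z})\neq 0$; the case $H_{2}=0$ (vacuously satisfying (A)) is precisely the generic one, and your ``$\mu$-bubble in a wide band'' or ``Li--Wang reduction'' are placeholders, not arguments---in particular no band-width estimate follows from $S_{\bar g}>0$ without a uniform positive lower bound, and $S_{\bar g}=f^{-4}(8\mu+S_g)$ can be arbitrarily small where $f$ is large. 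Third, even in the favorable case $H_{2}\neq 0$, you acknowledge but do not resolve the incompleteness of $\bar g$: an area-minimizing sequence in a class can escape to infinity in an incomplete metric, and ``interior a priori estimates from the Ricci bound and $|\nabla\log f|\le C$'' do not by themselves trap the minimizer. As written, the proof is a sketch of a plausible strategy in some sub-cases, but it does not establish the theorem.
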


Recall that an end of $M$ with respect to a bounded (smooth) domain $\Omega$
is simply an unbounded component of $M\setminus \Omega,$ and $M$ is said to
have finitely many ends if the number of ends is always bounded by a fixed
constant regardless of $\Omega.$ We point out that the theorem has been
proved by the authors in \cite{MW} for the case (B) under an additional
assumption that the volume of unit balls does not decay too fast at
infinity. The following example indicates the necessity of the topological
assumptions.

\begin{example}
\label{E1} Consider the connected sum $N=\left( \mathbb{S}^{2}\times \mathbb{%
S}^{1}\right) \#\left( \mathbb{S}^{2}\times \mathbb{S}^{1}\right) $.
According to \cite{GL, SY1}, $N$ carries a metric with positive scalar
curvature. So the scalar curvature of the pull-back metric on its universal
cover $M$ is positive as well. Note that the first fundamental group of $N$
is a free group on two generators. In particular, it is non-amenable. By a
result of Brooks \cite{B}, the bottom spectrum of $M$ equipped with the
pull-back metric satisfies $\lambda _{1}(M)>0.$
\end{example}

Clearly, the estimate of Theorem \ref{A1} fails for $M.$ This failure is due
to the fact that both topological conditions (A) and (B) are violated.
Indeed, for condition (A), while the first Betti number of $M$ is $0,$ the
number of ends of $M$ must be infinite as this is the case for the free
group on two generators. For condition (B), obviously, $H_2(M, \mathbb{Z})$
contains spherical classes.

Note that the estimate in Theorem \ref{A1} is sharp for the hyperbolic space 
$\mathbb{H}^{3}.$ As such, we consider the corresponding rigidity issue and
prove the following result.

\begin{theorem}
\label{A2}Let $\left( M,g\right) $ be a complete three-dimensional manifold
with scalar curvature bounded by $S\geq -6K$ and bottom of spectrum $\lambda
_{1}\left( M\right) =K$ for some constant $K>0.$ Then either $M$ has no
finite volume end or $M=\mathbb{R}\times \mathbb{T}^{2}$, the warped product
of a line and the flat torus with 
\begin{equation*}
ds_{M}^{2}=dt^{2}+e^{2\sqrt{K}\,t}ds_{\mathbb{T}^{2}}^{2}
\end{equation*}%
provided that $H_{2}\left( M,\mathbb{Z}\right) $ contains no spherical
classes and that the Ricci curvature of $M$ is bounded from below by a
constant.
\end{theorem}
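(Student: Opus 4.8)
The plan is to prove the dichotomy by assuming that $M$ carries an end $E$ of finite volume and deducing that $M$ is the asserted warped product; this is, in effect, the equality case of Theorem~\ref{A1} under hypothesis (A). Since $\lambda _{1}(M)=K$, an exhaustion of $M$ by compact domains combined with the Harnack inequality furnishes a positive smooth function $f$ on $M$ with $\Delta f=-Kf$. Writing $u=\log f$, so that
\begin{equation*}
\Delta u+\left\vert \nabla u\right\vert ^{2}+K=0,
\end{equation*}
the goal is to force $\left\vert \nabla u\right\vert ^{2}\equiv K$ and $\nabla ^{2}u=-K\,g+\nabla u\otimes \nabla u$, which are exactly the identities satisfied by $u=-\sqrt{K}\,t$ on the model $dt^{2}+e^{2\sqrt{K}\,t}ds_{\mathbb{T}^{2}}^{2}$. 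Two preliminary reductions: (i) Yau's gradient estimate, available because the Ricci curvature is bounded below, gives a global bound on $\left\vert \nabla u\right\vert $; and (ii) on the finite volume end $E$ one checks that $f^{-1}\geq 0$ is a subsolution, $\Delta (f^{-1})\geq K\,f^{-1}$, and a cutoff integration by parts then yields $\int _{E}f^{-2}<\infty$ together with the companion weighted integrals. These facts supply the a priori control needed to justify the integrations by parts below and otherwise play no role in the rigidity.

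The core of the proof is an integrated Bochner identity for $u$. From
\begin{equation*}
\tfrac{1}{2}\Delta \left\vert \nabla u\right\vert ^{2}=\left\vert \nabla ^{2}u\right\vert ^{2}-2\,\nabla ^{2}u\left( \nabla u,\nabla u\right) +\mathrm{Ric}\left( \nabla u,\nabla u\right) ,
\end{equation*}
the key use of dimension three is that, at a point where $\nabla u\neq 0$, choosing $e_{1}=\nabla u/\left\vert \nabla u\right\vert $ and $e_{2},e_{3}$ tangent to the level surface $\Sigma _{c}=\{u=c\}$, one has $\mathrm{Ric}(\nabla u,\nabla u)=\left\vert \nabla u\right\vert ^{2}\bigl(\tfrac{1}{2}S_{M}-K_{23}\bigr)$ where $K_{23}$ is the sectional curvature of the plane tangent to $\Sigma _{c}$, and by the Gauss equation $K_{23}=K_{\Sigma _{c}}-\det \mathrm{II}$ with $\mathrm{II}=\left\vert \nabla u\right\vert ^{-1}\nabla ^{2}u|_{T\Sigma _{c}}$ and $K_{\Sigma _{c}}$ the Gauss curvature of $\Sigma _{c}$. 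Feeding in $S_{M}\geq -6K$ and integrating the resulting pointwise inequality over a region $\{a\leq u\leq b\}\cap E$ (with suitable cutoffs near infinity and near $\partial E$), the coarea formula turns the term involving $K_{\Sigma _{c}}$ into an integral of $\int _{\Sigma _{c}}K_{\Sigma _{c}}=2\pi \chi (\Sigma _{c})$ by Gauss--Bonnet. Because $\lambda _{1}(M)=K$ is sharp, I expect the resulting inequality to be forced to equality; unravelling equality then gives $\left\vert \nabla u\right\vert ^{2}\equiv K$, $\nabla ^{2}u=-K\,g+\nabla u\otimes \nabla u$ (so the level surfaces are totally umbilic, $\mathrm{II}=-\sqrt{K}\,g_{\Sigma _{c}}$), and, invoking the hypothesis that $H_{2}(M,\mathbb{Z})$ has no spherical classes to handle the sign of $\chi (\Sigma _{c})$, that $\chi (\Sigma _{c})\leq 0$. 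Setting $t=-u/\sqrt{K}$, which satisfies $\left\vert \nabla t\right\vert \equiv 1$ and has the Hessian of a warped distance function, one concludes that $M=\mathbb{R}\times _{e^{\sqrt{K}\,t}}\Sigma $ over a level surface $\Sigma $, which is compact because $E$ has finite volume.

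It remains only to identify $\Sigma $, which is now a closed surface. For the warped product $dt^{2}+e^{2\sqrt{K}\,t}g_{\Sigma }$ a direct computation gives $S_{M}=e^{-2\sqrt{K}\,t}S_{\Sigma }-6K$, so $S_{M}\geq -6K$ forces $S_{\Sigma }\geq 0$ pointwise, hence $4\pi \chi (\Sigma )=\int _{\Sigma }S_{\Sigma }\geq 0$. Combined with $\chi (\Sigma )\leq 0$ (equivalently: since $\pi _{2}(\mathbb{R}\times _{e^{\sqrt{K}\,t}}\Sigma )=\pi _{2}(\Sigma )$, a two-sphere $\Sigma $ would give a nonzero spherical class in $H_{2}(M,\mathbb{Z})$), this gives $\chi (\Sigma )=0$ and $S_{\Sigma }\equiv 0$, so $\Sigma $ is a flat torus and $M=\mathbb{R}\times \mathbb{T}^{2}$ with the stated metric.

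The main obstacle is the middle step. The integrations by parts occur on a noncompact end along which $f=e^{u}$ blows up, so making them rigorous — in particular showing the error terms at infinity and the boundary terms on $\partial E$ either vanish or have the right sign — requires carefully chosen cutoff and weight functions, fed by the Yau gradient estimate, the volume comparison, and the integrability $\int _{E}f^{-2}<\infty$ derived above. One must also arrange the coarea/Gauss--Bonnet step to use only regular level sets, passing to the limit via Sard's theorem. The most delicate point is to close the Bochner inequality in the correct direction despite $\chi (\Sigma _{c})$ being a priori unsigned; this is precisely where the absence of spherical classes must be exploited, and pinning down how it interacts with the scalar curvature bound is the crux of the argument. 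Finally, one should check that the rigidity identities, once established on a single slab $\{a\leq u\leq b\}$, propagate to all of $M$; this should follow from unique continuation for the structure equations.
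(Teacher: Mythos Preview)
Your approach and the paper's diverge at the choice of auxiliary function. You take a positive eigenfunction $\Delta f=-Kf$ and set $u=\log f$; the paper instead works with a positive \emph{harmonic} function $f$, produced by Li--Tam theory precisely from the assumption that $M$ carries a finite volume end $F$: this $f$ is proper with $f\to\infty$ on $F$ and has $\inf f=0$ on $E=M\setminus F$, so its positive level sets are compact and adapted to the end structure from the outset. The hypothesis $\lambda_1(M)=K$ then enters as a \emph{separate} ingredient, via the Poincar\'e inequality applied to the test function $u^{1/2}\chi(u)$ (Lemma~\ref{Int_u}), yielding an upper bound that is played against the Bochner--Gauss--Bonnet lower bound.

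The step you yourself flag as an expectation---that sharpness of $\lambda_1$ forces equality in the integrated Bochner inequality---is the genuine gap, and it is conceptual rather than technical. In your scheme the condition $\lambda_1=K$ has already been consumed in producing the eigenfunction, and that eigenfunction carries no information about the end structure of $M$; nothing you have written links the two. Example~\ref{E2} shows why such a link is indispensable: the warped products $dt^2+\cosh^{2/a}(at)\,ds_{\mathbb{T}^2}^2$ for $1<a\le\tfrac{3}{2}$ satisfy $S\ge -6$, $\lambda_1=1$, have no spherical $H_2$-classes, and carry the explicit positive $1$-eigenfunctions $\cosh^{-1/a}(at)$ for which your target identities $|\nabla u|^2\equiv K$ and $\nabla^2 u=-Kg+\nabla u\otimes\nabla u$ \emph{fail}. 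These manifolds are excluded from the rigidity conclusion of Theorem~\ref{A2} only because both of their ends have infinite volume; hence whatever forces equality must exploit the finite volume end inside the analytic estimate, not merely to secure compactness of a level surface at the last step. The paper accomplishes this through the opposite asymptotics of its harmonic $f$ on the two ends: with logarithmic cutoffs, the boundary contributions from $L(\varepsilon^2,\varepsilon)$ near the nonparabolic end and from $L(\varepsilon^{-1},\varepsilon^{-2})$ near the finite volume end are shown to match to leading order $\tfrac{2}{3}\ln(\varepsilon^{-1})$ (estimates (\ref{I1}) and (\ref{I2})), and it is this cancellation---unavailable without a finite volume end---that drives the nonnegative defect $\Gamma$, collecting the Kato excess $|f_{ij}|^2-\tfrac{3}{2}|\nabla|\nabla f||^2$ together with $S+6$, to zero as $\varepsilon\to 0$.
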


Theorem \ref{A2} is very much motivated by the work of Li and the second
author \cite{LW1, LW2}, where they have addressed the similar rigidity
question for the aforementioned result of Cheng.

\begin{theorem}[Li-Wang]
\label{LW} Suppose $(M^{n},g)$ is complete, $n\geq 3,$ with $\lambda
_{1}\geq \frac{(n-1)^{2}}{4}\,K$ and $\mathrm{Ric}\geq -(n-1)K.$ Then either 
$M$ is connected at infinity or $M^{n}=\mathbb{R}\times N^{n-1}$, for some
compact $N$ with $ds_{M}^{2}=dt^{2}+e^{2\sqrt{K}\,t}\,ds_{N}^{2}$ for $n\geq
3$ or $ds_{M}^{2}=dt^{2}+\cosh ^{2}(\sqrt{K}\,t)\,ds_{N}^{2}$ when $n=3.$
\end{theorem}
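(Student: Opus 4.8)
The plan is to run the ``harmonic function'' strategy for a manifold with more than one end, and then extract rigidity from a Bochner inequality that is saturated precisely under the stated hypotheses. We may assume $K>0$, the case $K=0$ being the Cheeger--Gromoll splitting theorem. Suppose $M$ is not connected at infinity, so it has at least two ends; the goal is to produce the splitting. Since $\lambda_1(M)\ge\frac{(n-1)^2}{4}K>0$, $M$ is nonparabolic, and by the Li--Tam theory of harmonic functions on manifolds with several ends there is a nonconstant harmonic function $f$ adapted to two of the ends: if at least two ends are nonparabolic one takes $f$ bounded with finite Dirichlet integral $\int_M|\nabla f|^2<\infty$ (a limit of Dirichlet solutions on an exhaustion whose boundary values separate two nonparabolic ends); if exactly one end is nonparabolic one takes $f$ positive, bounded on that end, and tending to infinity along a parabolic (hence finite-volume) end. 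Fix such an $f$ and set $u:=|\nabla f|$. As a preliminary estimate, testing the Poincar\'e inequality against an exponential cutoff times $f-c$ (with $c$ the limiting value of $f$ on the end in question) and integrating by parts using $\Delta f=0$ yields $\int|\nabla f|^2\le Ce^{-2\sqrt{\lambda_1}\,R}$ on the part outside $B_p(R)$ of each end along which $f$ is bounded.

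Next I would bring in Bochner. Since $f$ is harmonic,
\[
\Delta|\nabla f|^2=2|\nabla^2f|^2+2\,\mathrm{Ric}(\nabla f,\nabla f)\ge-2(n-1)K|\nabla f|^2 ,
\]
so $u$ is a nonnegative subsolution of $\Delta+(n-1)K$; together with $\mathrm{Ric}\ge-(n-1)K$ this lets the Li--Schoen mean value inequality promote the $L^2$ decay above to the pointwise bound $u(x)\le Ce^{-\sqrt{\lambda_1}\,r(p,x)}$ on every end along which $f$ is bounded. More importantly, the refined Kato inequality $|\nabla^2f|^2\ge\frac{n}{n-1}|\nabla|\nabla f||^2$ for the harmonic $1$-form $df$ sharpens Bochner, on the open set $\{\nabla f\ne0\}$, to
\[
u\,\Delta u\ \ge\ \frac{1}{n-1}\,|\nabla u|^2-(n-1)K\,u^2 .
\]

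The heart of the matter is to show that this inequality, and the two inequalities producing it, are in fact equalities. I would multiply the last display by cutoff functions built from $u$ (or suitable powers of $u$), integrate by parts, and play the result against the Poincar\'e inequality $\lambda_1\int\varphi^2\le\int|\nabla\varphi|^2$ tested on the same functions; the hypothesis $\lambda_1\ge\frac{(n-1)^2}{4}K$ is exactly what makes the resulting quadratic expression have the correct sign, and the exponential decay of $u$ on the nonparabolic side together with the finite volume on the parabolic side forces all error terms to vanish in the limit. Equality then gives three facts: equality in refined Kato, so $\nabla^2f=v_1\big(\theta\otimes\theta-\frac{1}{n-1}(g-\theta\otimes\theta)\big)$ with $\theta=\nabla f/|\nabla f|$ and $v_1=\theta(u)$; equality in the Ricci bound, $\mathrm{Ric}(\theta,\theta)=-(n-1)K$; and $\nabla u$ parallel to $\nabla f$. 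In particular $f$ has no critical points, the level sets $\Sigma_c=f^{-1}(c)$ are smooth totally umbilic hypersurfaces of mean curvature $H=-v_1/u$, and the normalized gradient flow of $f$ gives a diffeomorphism $M\cong\mathbb{R}\times N$, $N=\Sigma_{c_0}$, under which $g=dt^2+\phi(t)^2\,ds_N^2$; finiteness of the Dirichlet integral (resp. of the volume of the parabolic end) forces $N$ to be compact. I expect this step to be the main obstacle: choosing the test functions so the integration by parts closes, controlling the error terms with exactly the decay available, ruling out critical points of $f$ cleanly, and checking that the warped structure propagates over all of $M$.

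It remains to identify $\phi$. Along the flow, umbilicity together with $\mathrm{Ric}(\theta,\theta)=-(n-1)K$ reduces the Riccati equation for the mean curvature to $H'=-\frac{H^2}{n-1}+(n-1)K$, whose solutions are: the constants $H\equiv\pm(n-1)\sqrt{K}$, giving $\phi(t)=e^{\pm\sqrt{K}\,t}$; $H=(n-1)\sqrt{K}\,\tanh(\sqrt{K}(t-t_0))$, giving $\phi(t)=\cosh(\sqrt{K}(t-t_0))$; and $H=(n-1)\sqrt{K}\,\coth(\sqrt{K}(t-t_0))$, giving $\phi(t)=\sinh(\sqrt{K}(t-t_0))$. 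The $\sinh$ case forces a smooth center for the flow, hence $M$ connected at infinity, contrary to assumption, and is discarded. Finally the $\cosh$ case cannot occur when $n\ge4$: for $dt^2+\cosh^2(\sqrt{K}\,t)\,ds_N^2$ the positive function $\cosh^{-(n-2)}(\sqrt{K}\,t)$ is a generalized eigenfunction with eigenvalue $(n-2)K$, so $\lambda_1\le(n-2)K$, and $(n-2)K<\frac{(n-1)^2}{4}K$ exactly for $n\ge4$, contradicting the hypothesis; when $n=3$ one has $(n-2)K=\frac{(n-1)^2}{4}K=K$ and no contradiction arises, which is precisely why the extra $\cosh^2$ model survives only in dimension three. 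After reparametrising $t$ one obtains the model metrics in the statement, completing the proof.
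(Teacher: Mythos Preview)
This theorem is not proved in the present paper. It is stated as background (Theorem~1.3, attributed to Li--Wang) and simply cited from \cite{LW1,LW2}; the paper's own contributions are Theorems~\ref{A1}, \ref{A2}, and \ref{A3}, which concern three-manifolds under a \emph{scalar} curvature lower bound. So there is no ``paper's own proof'' of this statement to compare your attempt against.

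That said, your sketch is a faithful outline of the actual Li--Wang argument in the cited references: construct a nonconstant harmonic function separating ends via Li--Tam theory, use the Bochner formula sharpened by the refined Kato inequality $|\nabla^2 f|^2\ge\tfrac{n}{n-1}|\nabla|\nabla f||^2$, play the resulting differential inequality against the Poincar\'e inequality with $\lambda_1=\tfrac{(n-1)^2}{4}K$, and read off from equality that $M$ is a warped product with warping function solving the Riccati equation. Your identification of the warping factors and the dimensional dichotomy (the $\cosh$ model has $\lambda_1=(n-2)K$, which matches $\tfrac{(n-1)^2}{4}K$ only when $n=3$) is also correct. The honest caveat you flag --- that closing the integration by parts with precisely the available decay, and propagating the warped structure across possible critical points of $f$, is where the real work lies --- is exactly where the cited papers spend their effort; your write-up is a plan, not yet a proof, but the plan is the right one.
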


The result says that $M$ must split off a line in an explicit manner if it
is not connected at infinity. It may also be viewed as a suitable
generalization of the famous Cheeger-Gromoll splitting theorem \cite{CG} for
manifolds with non-negative Ricci curvature. In comparison, Theorem \ref{A2}
is considerably weaker as it fails to say anything when all ends are of
infinite volume. Surprisingly, this happens for good reasons. In fact, the
following example shows that even within the class of warped product
manifolds the metrics with $S\geq -6$ and $\lambda_{1}\left( M\right) =1$
are not unique. Consequently, a strict analogue of Theorem \ref{LW} is not
to be expected.

\begin{example}
\label{E2} Let $M=\mathbb{R}\times \mathbb{T}^{2}$ with $ds_{M}^{2}=dt^{2}+%
\cosh ^{\frac{2}{a}}\left( at\right) ds_{\mathbb{T}^{2}}^{2}$, where $1\leq
a\leq \frac{3}{2}.$ Then the scalar curvature of $M$ satisfies 
\begin{equation*}
S=-6+\left( 6-4a\right) \cosh ^{-2}\left( at\right) \geq -6
\end{equation*}%
and $\lambda _{1}\left( M\right) =1$ as the positive function $u\left(
t\right) =\cosh ^{-\frac{1}{a}}\left( at\right) $ satisfies 
\begin{equation*}
\Delta u=\left( a-1\right) \sinh ^{2}\left( at\right) \cosh ^{-\left( 2+%
\frac{1}{a}\right) }\left( at\right) -a\cosh ^{-\frac{1}{a}}\left( at\right)
\leq -u.
\end{equation*}
\end{example}

Among this family of manifolds, only the one corresponding to $a=1$ appears
in rigidity result Theorem \ref{LW} as all others violate the Ricci lower
bound. Note also that $M$ has constant scalar curvature $S=-6$ for $a=\frac{3%
}{2}.$

We now indicate some of the ideas involved in the proof of Theorem \ref{A1}.
To do so, let us first quickly recall the argument in \cite{MW}. Assume by
contradiction that $\lambda _{1}(M)>K.$ Then $M$ is necessarily nonparabolic
and admits positive Green's functions. The assumption on the volume of unit
balls is used to show that the minimal positive Green's function $G(p,x)$ of 
$M$ with fixed $p$ goes to zero at infinity. Moreover, the topological
assumption (B) ensures the regular level sets $l(t)$ of $G$ are necessarily
connected compact surfaces on each end. The proof proceeds as in \cite{M} by
taking $\varphi =\psi \,\left\vert \nabla G\right\vert ^{1/2}$ as a test
function in the Poincar\'{e} inequality with the cut-off function $\psi $
chosen to depend on the function $G$ as well. One is then led to integrate
the following Bochner formula over the level sets $l(t).$

\begin{equation*}
\Delta \left\vert \nabla G\right\vert =\left( \left\vert G_{ij}\right\vert
^{2}-\left\vert \nabla \left\vert \nabla G\right\vert \right\vert
^{2}\right) \left\vert \nabla G\right\vert ^{-1}+\mathrm{Ric}\left( \nabla
G,\nabla G\right) \left\vert \nabla G\right\vert ^{-1}.
\end{equation*}%
The Ricci curvature term is handled by first rewriting it into

\begin{equation}
\mathrm{Ric}\left( \nabla G,\nabla G\right) \left\vert \nabla G\right\vert
^{-2}=\frac{1}{2}S-\frac{1}{2}S_{l\left( t\right) }+\frac{1}{\left\vert
\nabla G\right\vert ^{2}}\left( \left\vert \nabla \left\vert \nabla
G\right\vert \right\vert ^{2}-\frac{1}{2}\left\vert \nabla
^{2}G\right\vert^{2}\right)  \label{1}
\end{equation}%
and then applying the Gauss-Bonnet theorem, where $S_{l\left( t\right) }$ is
the scalar curvature of the level set $l\left(t\right).$

The idea of rewriting the Ricci curvature term as (\ref{1}) has origin in
Schoen and Yau \cite{SY}, where it was realized that on a minimal surface $N$
in a three-dimensional manifold $M,$

\begin{equation}
\mathrm{Ric}\left( \nu,\nu\right)=\frac{1}{2}S-\frac{1}{2}S_{N }-\frac{1}{2}%
\left\vert A\right\vert^2  \label{2}
\end{equation}
with $\nu,$ $S_{N}$ and $A$ being the unit normal vector, the scalar
curvature and the second fundamental form of $N,$ respectively. An identity
of the nature (\ref{2}) was derived subsequently for any surface $N,$ not
necessarily minimal, by Jezierski and Kijowski in \cite{JK}. Recently, this
kind of identity was discovered by Stern \cite{S} as well for the level sets
of harmonic functions.  Level set methods have been recently applied
in various  settings in  \cite{AMO, BKKS, HKK, HKKZ}. 

Now for the proof of Theorem \ref{A1}, without the volume lower bound
assumption on the unit balls, the properness of the minimal positive Green's
function is no longer guaranteed. As a result, one runs into the difficulty
that the Gauss-Bonnet formula can not be directly applied for the level sets 
$l(t)$ as they may be non-compact. To overcome this difficulty, we modify
the function $G$ by considering $u=G\,\psi,$ where $\psi$ is a smooth
cut-off function. While this guarantees that the positive level sets of $u$
are compact, the price we pay is that the function $u$ is no longer
harmonic. This creates many new technical issues. For one, it becomes
unclear whether the intersection of level sets of $u$ with each end is
connected. To get around the issue, we consider only the component $L(t,
\infty)$ of the super level set $\{u>t\}$ with the fixed pole $p\in L(t,
\infty).$ The Gauss-Bonnet formula is then applied to the compact connected
surfaces obtained from the intersection of the boundary of $L(t, \infty)$
with each unbounded component of $M\setminus L(t, \infty).$ Another issue is
that the Bochner formula for $u$ introduces many extra terms. Fortunately,
those terms can be controlled with the help of the exponential decay
estimate for the Green's function from \cite{LW1} together with a judicious
choice of the cut-off function $\psi$ based on a result from Schoen and Yau 
\cite{SY94}. Actually, instead of $G,$ we work with the so-called barrier
function $f,$ namely, a harmonic function defined on $M\setminus D$ for an
arbitrarily large compact smooth domain $D$ such that $f=1$ on the boundary $%
\partial D$ of $D$ and $0<f<1.$ The advantage is that it leads to a slightly
stronger conclusion, that is, the smallest essential spectrum of $M$ is
bounded from above by $K$ as well.

The proof of Theorem \ref{A2} follows more or less from that of Theorem \ref%
{A1} with a careful tracking of the constants. The advantage here is that we
work with a globally defined positive harmonic function. So no interior
boundary terms arise from the estimates. However, at one point, we do need
to derive precise estimates to balance the boundary terms at the infinity of
two ends.

Finally, we mention a splitting result for three dimensional manifolds with
non-negative scalar curvature. It may be viewed as the limiting case of
Theorem \ref{A2} by letting $K$ go to zero.

\begin{theorem}
\label{A3} Let $\left( M,g\right) $ be a complete three-dimensional
parabolic manifold with its scalar curvature $S\geq 0$ and Ricci curvature
bounded below by a constant. Assume that $H_{2}\left( M,\mathbb{Z}\right) $
contains no spherical classes. Then either $M$ is connected at infinity or
it splits isometrically as a direct product $M=\mathbb{R}\times \mathbb{T}%
^2. $
\end{theorem}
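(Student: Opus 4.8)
The plan is to derive Theorem \ref{A3} as the limiting case $K \to 0$ of the argument used for Theorem \ref{A2}, while replacing the exponential barrier estimates by the corresponding linear-growth estimates available on a parabolic manifold. Assume $M$ is not connected at infinity, so it has at least two ends. Since $M$ is parabolic, it carries no nonconstant positive superharmonic function, and in particular the minimal Green's function does not exist; instead, for a fixed compact smooth domain $D$ with connected boundary we work with the \emph{proper harmonic function} $f$ on $M\setminus D$ obtained from the Evans potential or from exhaustion, satisfying $f=0$ on $\partial D$ and $f(x)\to\infty$ as $x\to\infty$, with $\int_{l(t)}|\nabla f|$ independent of $t$ by harmonicity. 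The first step is to run the Bochner identity argument from the proof of Theorem \ref{A1}/\ref{A2} with $K=0$: integrating
\begin{equation*}
\Delta|\nabla f| = \bigl(|f_{ij}|^{2}-|\nabla|\nabla f||^{2}\bigr)|\nabla f|^{-1} + \mathrm{Ric}(\nabla f,\nabla f)|\nabla f|^{-1}
\end{equation*}
over level sets and using the rewriting \eqref{1} of the Ricci term together with the hypothesis $S\ge 0$, the Gauss--Bonnet theorem applied to the compact level surfaces $l(t)$ (compact and, by the topological assumption that $H_2(M,\mathbb Z)$ has no spherical classes, of genus $\ge 1$, hence $\int_{l(t)}S_{l(t)}\le 0$), one forces every inequality in the chain to be an equality.

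The second step is to extract rigidity from the equality case. Equality in the Kato-type inequality $|f_{ij}|^{2}\ge |\nabla|\nabla f||^{2}+\tfrac12|\nabla^{2}f|^{2}$ (which is what makes the rewriting \eqref{1} tight) forces the Hessian of $f$ to have a very restricted form: $\nabla f$ is an eigenvector of $\nabla^2 f$ and the restriction of $\nabla^2 f$ to the level sets is a multiple of the induced metric. Combined with $f$ being harmonic and $S\ge 0$ with equality $S=S_{l(t)}=0$ forced everywhere, and $|A|=0$ forced on the level sets (so the level sets are totally geodesic flat surfaces), one gets that $M$ is foliated by totally geodesic flat surfaces and the metric takes the warped product form $ds_M^2 = dt^2 + \eta(t)^2\, ds_{l}^2$ along the flow of $\nabla f/|\nabla f|$. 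Feeding the warped-product scalar curvature formula back into $S\ge 0$ and $S_{l(t)}=0$ forces the warping function $\eta$ to be constant, so the product is direct: $M=\mathbb R\times \Sigma$ with $\Sigma$ a complete flat surface. The parabolicity of $M$ and the no-spherical-classes hypothesis then pin down $\Sigma=\mathbb T^2$ (a complete noncompact flat surface would be $\mathbb R^2$, a cylinder, or a flat quotient, but $\mathbb R\times\mathbb R^2=\mathbb R^3$ and $\mathbb R\times(\text{cylinder})$ are connected at infinity, contradicting the two-ends assumption; the remaining case with $\Sigma$ noncompact flat still having two ends is excluded because it would make $M$ connected at infinity as well or would violate parabolicity).

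The main obstacle I anticipate is the same one flagged in the introduction for Theorem \ref{A1}: since $M$ is parabolic and we cannot assume a volume lower bound on unit balls, the proper harmonic function $f$ produced on $M\setminus D$ need not have compact regular level sets globally, so the Gauss--Bonnet step cannot be applied naively. The fix is to localize exactly as in the proof of Theorem \ref{A1}: replace $f$ by $u=f\psi$ with $\psi$ a suitable cut-off (chosen via the Schoen--Yau construction from \cite{SY94}) so that the positive super-level sets $\{u>t\}$ have compact boundary, work only with the component $L(t,\infty)$ containing the pole, and apply Gauss--Bonnet to the compact connected surfaces formed by intersecting $\partial L(t,\infty)$ with the unbounded components of the complement. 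One then controls the extra terms coming from $\Delta u = 2\nabla f\cdot\nabla\psi + f\Delta\psi$ using the linear growth rate of $f$ (the $K=0$ analogue of the exponential decay of the Green's function used for $K>0$), letting the cut-off radius tend to infinity; the error terms vanish in the limit and one recovers the equality case above. A secondary technical point is justifying the passage $K\to 0$ in the constants of Theorem \ref{A2}, i.e. checking that the finite-volume-end versus splitting dichotomy degenerates, under parabolicity, precisely into the connected-at-infinity versus $\mathbb R\times\mathbb T^2$ dichotomy, which amounts to observing that on a parabolic manifold a direct-product end $\mathbb R_+\times\mathbb T^2$ is the only finite-"energy" configuration consistent with two ends.
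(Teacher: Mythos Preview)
Your proposal has a genuine gap at the most important step: you assert that running the Bochner/level-set argument with $K=0$ ``forces every inequality in the chain to be an equality,'' but you give no mechanism that accomplishes this. In the proof of Theorem~\ref{A2} the equalities are forced by playing the Bochner inequality against the Poincar\'e inequality $\lambda_1(M)\int u\phi^2 \le \int|\nabla(u^{1/2}\phi)|^2$ with $\lambda_1(M)=K>0$; in the limit $K\to 0$ this inequality becomes $0 \le \int|\nabla(u^{1/2}\phi)|^2$, which is vacuous and squeezes nothing. Your ``linear growth estimates'' do not substitute for it, because without a lower bound from the spectrum there is no competing inequality to close the loop. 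The cut-off machinery you propose to import from Theorem~\ref{A1} is also unnecessary here: since $M$ is parabolic with two ends, Nakai and Li--Tam give a \emph{global} proper harmonic function $u:M\to\mathbb{R}$ with $u\to+\infty$ on one end and $u\to-\infty$ on the other, so the level sets $l(t)=\{u=t\}$ are already compact and no localization is needed.

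The paper's proof proceeds by a genuinely different route. One sets $w(t)=\int_{l(t)}|\nabla u|^2$ and derives, from the same Bochner/Gauss--Bonnet/Kato ingredients, the differential inequality
\[
\bigl(\phi w'-\phi' w\bigr)\Big|_s^t \;\ge\; -\int_s^t \phi'' w\,dr \;+\;\frac{3}{4}\int_s^t \phi\,\frac{(w')^2}{w}\,dr
\]
for nonnegative $C^2$ test functions $\phi$. Taking $\phi\equiv 1$ shows $w'$ is nondecreasing, i.e.\ $w$ is convex. Taking $\phi(r)=r^{-\alpha}$ and using the Cheng--Yau gradient estimate $|\nabla u|\le C|u|$ (hence $w(t)\le C|t|$) yields a bound $w(r)\le C|r|^{1/8}$ at both ends. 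A convex function on $\mathbb{R}$ with sublinear growth is constant, so $w'\equiv 0$; tracing back through the inequalities forces $\nabla^2 u=0$ and $S=0$, giving the isometric splitting $M=\mathbb{R}\times N$ with $N$ flat and compact, hence $N=\mathbb{T}^2$. The missing idea in your plan is precisely this convexity-plus-sublinear-growth argument, which replaces the spectral squeeze that disappears at $K=0$.
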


The proof relies on a monotonicity formula for harmonic functions originated
in \cite{MW} and refined in \cite{MW1}. In fact, a similar argument works
for the case $M$ is nonparabolic as well provided that each nonparabolic end
carries a proper barrier function.

The paper is organized as follows. In Section \ref{Est} we prove the bottom
spectrum estimate Theorem \ref{A1}. In Section \ref{Max} we present the
proof of Theorem \ref{A2}. Section \ref{Sge0} concerns splitting results for
three-manifolds with non-negative scalar curvature.\\

\textbf{Acknowledgment:} We wish to thank Otis Chodosh for his interest in
our work and for sharing Example \ref{E1} with us. We would also like to
thank Florian Johne for his careful reading of the previous version and for
his helpful comments. The first author was partially supported by NSF grant
DMS-1811845 and by a Simons Foundation grant.

\section{Spectral estimate\label{Est}}

This section is devoted to the proof of Theorem \ref{A1}. We first make some
preparations without making any dimension restriction on manifold $M.$
Denote by $r\left( x\right) $ the distance to a fixed point $p\in M.$ From
Theorem 4.2 in \cite{SY94}, we have a smooth distance-like function on an
arbitrary dimensional manifold $M.$

\begin{lemma}
\label{r}Let $\left( M,g\right) $ be an $n$-dimensional complete Riemannian
manifold with Ricci curvature bounded from below by a constant. Then there
exists a proper function $\rho \in C^{\infty }\left( M\right) $ such that%
\begin{gather}
\frac{1}{C}r\leq \rho \leq Cr  \label{c1} \\
\left\vert \nabla \rho \right\vert +\left\vert \Delta \rho \right\vert \leq
C\ \   \notag
\end{gather}%
and 
\begin{equation}
\left\vert \nabla \left( \Delta \rho \right) \right\vert \leq C\left\vert
\nabla ^{2}\rho \right\vert ,  \label{c2}
\end{equation}%
for some constant $C>0.$
\end{lemma}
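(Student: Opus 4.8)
The plan is to read the bounds $\frac{1}{C}r\leq \rho \leq Cr$ and $\left\vert \nabla \rho \right\vert +\left\vert \Delta \rho \right\vert \leq C$ directly off Theorem~4.2 of \cite{SY94}, and to extract the remaining estimate \eqref{c2} by re-examining that construction. Recall how $\rho$ is produced. By Bishop--Gromov volume comparison, which uses only the Ricci lower bound, one fixes a maximal $1$-separated subset $\left\{ x_{j}\right\}$ of $M$, so that the balls $B\left( x_{j},2\right) $ form a locally finite cover of $M$ of multiplicity bounded by some $N=N(n,K)$; choosing a subordinate smooth partition of unity $\left\{ \chi _{j}\right\}$ with $\left\vert \nabla \chi _{j}\right\vert +\left\vert \nabla ^{2}\chi _{j}\right\vert \leq C$, one sets
\begin{equation*}
\rho =\sum_{j}r\left( p,x_{j}\right) \chi _{j}.
\end{equation*}
Since $\left\vert r\left( p,x_{j}\right) -r\left( p,x\right) \right\vert \leq 2$ whenever $x\in \mathrm{supp}\,\chi _{j}$, one has $\left\vert \rho -r\right\vert \leq 2$, hence $\frac{1}{C}r\leq \rho \leq Cr$ (after a harmless modification near $p$) and properness. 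Fixing $x$ and a reference index $j_{0}$ with $x\in \mathrm{supp}\,\chi _{j_{0}}$, and using $\sum_{j}\chi _{j}\equiv 1$, one rewrites $\rho$ near $x$ as $\rho =\sum_{j\in I(x)}a_{j}\chi _{j}+\mathrm{const}$, where $a_{j}:=r(p,x_{j})-r(p,x_{j_{0}})$ obeys $\left\vert a_{j}\right\vert \leq 2$ and $\left\vert I(x)\right\vert \leq N$; the bounds $\left\vert \nabla \rho \right\vert \leq C$ and $\left\vert \Delta \rho \right\vert \leq C$ are then immediate, the latter once one knows $\left\vert \Delta \chi _{j}\right\vert \leq C$, which is the substantive point in \cite{SY94}.

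For \eqref{c2} I would differentiate this local representation twice more. Since the $a_{j}$ are constants, $\nabla ^{2}\rho =\sum_{j\in I(x)}a_{j}\,\nabla ^{2}\chi _{j}$ and, taking a metric trace and one further covariant derivative, $\nabla (\Delta \rho )=\sum_{j\in I(x)}a_{j}\,\nabla (\Delta \chi _{j})$ with $\nabla (\Delta \chi _{j})$ nothing but a trace of $\nabla ^{3}\chi _{j}$; no commutator (Ricci) terms arise in this computation. Thus $\nabla ^{2}\rho$ and $\nabla (\Delta \rho )$ are built, with the \emph{same} coefficients $a_{j}$, out of the model tensors $\nabla ^{2}\chi _{j}$ and $\nabla (\Delta \chi _{j})$ on the at most $N$ charts meeting $x$, and \eqref{c2} reduces to a uniform comparison of these two families of model quantities.

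The crux is this comparison. One cannot hope for $\left\vert \nabla (\Delta \chi _{j})\right\vert \leq C\left\vert \nabla ^{2}\chi _{j}\right\vert$ pointwise for a fixed $j$ — the Hessian of a single cutoff vanishes at its inflection points — nor even for $\left\vert \nabla (\Delta \chi _{j})\right\vert \leq C$, because under a Ricci \emph{lower} bound alone the third covariant derivative $\nabla ^{3}\chi _{j}$ absorbs uncontrolled derivatives of the ambient curvature. The plan to get past this is to exploit that the whole construction lives at unit scale: one takes the $\chi _{j}$ to depend only on a Greene--Wu regularization of the local distance functions, rescales an arbitrary unit ball to fixed size, and observes that the rescaled metrics range over a precompact family in a topology fine enough to keep the $C^{2}$-bounds already in hand. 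A contradiction argument then finishes the proof: if \eqref{c2} failed along points $x_{k}$ with $\left\vert \nabla ^{2}\rho \right\vert (x_{k})/\left\vert \nabla (\Delta \rho )\right\vert (x_{k})\to 0$, the rescaled functions would subconverge to a limiting model on which $\nabla ^{2}\rho \equiv 0$ yet $\nabla (\Delta \rho )\neq 0$, which is impossible. Verifying that the regularization and the partition of unity can be performed $K$-uniformly, so that this blow-up limit exists, is the technical heart of the matter; granting it, \eqref{c2} holds with $C=C(n,K)$ and the lemma follows.
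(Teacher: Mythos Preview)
The paper does not actually prove this lemma: it invokes Theorem~4.2 of \cite{SY94} for \eqref{c1} and the bound $|\nabla\rho|+|\Delta\rho|\le C$, and for \eqref{c2} it says only that it ``follows immediately from the construction of $\rho$'' in \cite{SY94}. So there is no detailed argument in the paper to compare against beyond that one-line citation.

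Your compactness route to \eqref{c2}, however, has a genuine gap. Under a Ricci lower bound alone, unit geodesic balls do \emph{not} form a precompact family in any topology fine enough for the blow-up to succeed: Gromov--Hausdorff precompactness is far too coarse, and even the $C^{1,\alpha}$ harmonic-coordinate convergence of Anderson--Cheeger requires an additional noncollapsing hypothesis and still would not transmit the third-order quantity $\nabla(\Delta\rho)$ to the limit. The contradiction step is in fact circular: to know that $\nabla(\Delta\rho)$ survives in the limit while $\nabla^{2}\rho$ tends to zero, you would already need uniform $C^{3}$ control on the $\chi_{j}$ (equivalently on $\rho$), which is precisely what is at issue. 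You yourself flag this as ``the technical heart of the matter'' and leave it unverified; I do not believe it can be verified in this generality. The intended argument is not a soft limiting one but a direct inspection of the specific construction in \cite{SY94}: one writes $\Delta\rho$ explicitly in terms of the building blocks and differentiates once more by hand. Note also that for every downstream application in the paper (see \eqref{c4} and Lemma~\ref{u}) the slightly weaker bound $|\nabla(\Delta\rho)|\le C\bigl(|\nabla^{2}\rho|+1\bigr)$ already suffices, and that is what one should aim to extract from the construction.
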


Although (\ref{c2}) is not explicitly stated in \cite{SY94}, it follows
immediately from the construction of $\rho $. For given $R>0,$ let $\psi
:\left( 0,\infty \right) \rightarrow \mathbb{R}$ be a smooth function such
that $\psi =1$ on $\left( 0,R\right) $ and $\psi =0$ on $\left( 2R,\infty
\right) $ satisfying

\begin{equation}
\left\vert \psi ^{\prime }\right\vert \leq \frac{C}{R},\ \ \left\vert \psi
^{\prime \prime }\right\vert \leq \frac{C}{R^{2}}\text{ and }\left\vert \psi
^{\prime \prime \prime }\right\vert \leq \frac{C}{R^{3}}.  \label{c3}
\end{equation}%
Composing it with $\rho ,$ we obtain a cut-off function $\psi \left(
x\right) =\psi \left( \rho \left( x\right) \right) .$ According to Lemma \ref%
{r} it satisfies 
\begin{equation}
\left\vert \nabla \psi \right\vert +\left\vert \Delta \psi \right\vert \leq 
\frac{C}{R}\text{ \ and }\left\vert \nabla \left( \Delta \psi \right)
\right\vert \leq \frac{C}{R}\left( \left\vert \nabla ^{2}\rho \right\vert
+1\right) .  \label{c4}
\end{equation}%
Throughout this section, 
\begin{equation}
D\left( t\right) =\left\{ x\in M:\rho \left( x\right) <t\right\} .
\label{c5}
\end{equation}%
Obviously, $\psi =1$ on $D\left( R\right) $ and $\psi =0$ on $M\setminus
D\left( 2R\right) $.

We now assume that $\left( M,g\right) $ has positive spectrum, i.e., 
\begin{equation*}
\lambda _{1}\left( M\right) >0.
\end{equation*}%
In particular, $\left( M,g\right) $ is nonparabolic, that is, it admits a
positive symmetric Green's function. Then, for a given smooth connected
bounded domain $D\subset M,$ by Li-Tam \cite{LT1}, there a exists a barrier
function $f>0$ on $M\setminus D$ satisfying

\begin{eqnarray}
&&\Delta f=0\text{ \ on }M\setminus D  \label{barrier} \\
&&f=1\text{ on }\partial D  \notag \\
&&\liminf_{x\rightarrow \infty }f\left( x\right) =0.  \notag
\end{eqnarray}%
Moreover, such $f$ can be obtained as a limit $f=\lim_{R_{i}\rightarrow
\infty }f_{i},$ where each $f_{i}$ is a harmonic function on $%
D(R_{i})\setminus D$ with $f_{i}=1$ on $\partial D$ and $f_{i}=0$ on $%
\partial D\left( R_{i}\right) .$ It is easy to see that $\int_{M\setminus
D}\,\left\vert \nabla f\right\vert ^{2}<\infty .$ In particular, by \cite{LW}%
, for all $0<t\leq 1$ and $r\geq R_{0}$ with $D\subset D(R_{0}),$

\begin{equation}
-\int_{\partial D(r)}\,\frac{\partial f}{\partial \nu }=\int_{\{f=t\}}\,%
\left\vert \nabla f\right\vert =C_{0}>0,  \label{c6}
\end{equation}%
where $\nu $ is the outward unit normal vector of $\partial D(r).$

According to Cheng-Yau gradient estimate \cite{CY0}, when the Ricci
curvature of $M$ is bounded below by a constant, 
\begin{equation}
\left\vert \nabla \ln f\right\vert \leq C\text{ \ on }M\setminus D.
\label{CY}
\end{equation}

We now consider the function $u$ given by 
\begin{equation}
u=f\psi .  \label{c7}
\end{equation}%
It is straightforward to check the following lemma by (\ref{CY}) and (\ref%
{c4}).

\begin{lemma}
\label{u}Let $\left( M,g\right) $ be an $n$-dimensional complete noncompact
Riemannian manifold with Ricci curvature bounded below by a constant. Then $%
u $ is harmonic on $D\left( R\right) \setminus D$ and%
\begin{eqnarray*}
\left\vert \nabla u\right\vert &\leq &C\left( u+\frac{1}{R}\right) \\
\left\vert \Delta u\right\vert &\leq &\frac{C}{R} \\
\left\vert \nabla \left( \Delta u\right) \right\vert &\leq &\frac{C}{R}%
\left( \left\vert \nabla ^{2}\rho \right\vert ^{2}+\left\vert \nabla
^{2}f\right\vert ^{2}+1\right)
\end{eqnarray*}%
on $M\setminus D.$
\end{lemma}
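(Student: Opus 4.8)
The plan is a direct Leibniz-rule computation, organized according to the value of $\rho$. Taking $R$ large enough that $D\subset D(R)$, we have $\psi\equiv 1$ on $D(R)$, so $u=f$ on $D(R)\setminus D$; since $f$ is harmonic on $M\setminus D$ this gives the first assertion, and on $D(R)\setminus D$ one has $\Delta u=0$, $\nabla(\Delta u)=0$, and $|\nabla u|=|\nabla f|\le Cf=Cu$ by the Cheng--Yau estimate (\ref{CY}). On $M\setminus D(2R)$ we have $\psi\equiv 0$, so $u$ and all of its derivatives vanish there. Hence all three estimates reduce to the transition annulus $D(2R)\setminus D(R)$, where $f$ remains harmonic and $0<f<1$ by (\ref{barrier}).

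On $M\setminus D$ the product rule together with $\Delta f=0$ gives
\begin{equation*}
\nabla u=\psi\,\nabla f+f\,\nabla\psi,\qquad \Delta u=2\,\langle\nabla f,\nabla\psi\rangle+f\,\Delta\psi,
\end{equation*}
\begin{equation*}
\nabla(\Delta u)=2\,\nabla^{2}f(\nabla\psi,\cdot)+2\,\nabla^{2}\psi(\nabla f,\cdot)+(\Delta\psi)\,\nabla f+f\,\nabla(\Delta\psi).
\end{equation*}
For the gradient bound, (\ref{CY}) gives $|\nabla f|\le Cf$, whence $|\nabla u|\le C\psi f+f\,|\nabla\psi|\le Cu+C/R$ by (\ref{c4}) and $f<1$, which is exactly the claimed inequality (and it holds trivially on the inner and outer regions as well). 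For the Laplacian bound I would instead use $|\nabla f|=f\,|\nabla\ln f|\le C$ and $|\nabla\psi|+|\Delta\psi|\le C/R$ from (\ref{c4}); this bounds the two terms of $\Delta u$ by $C/R$ on the annulus, and $\Delta u=0$ off it.

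The last estimate is the only step needing a small additional input: (\ref{c4}) does not record a bound on $|\nabla^{2}\psi|$, but since $\psi=\psi(\rho)$ one has $\nabla^{2}\psi=\psi''(\rho)\,\nabla\rho\otimes\nabla\rho+\psi'(\rho)\,\nabla^{2}\rho$, so Lemma \ref{r} together with (\ref{c3}) yields $|\nabla^{2}\psi|\le C/R^{2}+(C/R)\,|\nabla^{2}\rho|\le (C/R)(|\nabla^{2}\rho|+1)$ for $R\ge 1$. Plugging this, (\ref{c4}), the bound $|\nabla f|\le C$, and $f<1$ into the formula for $\nabla(\Delta u)$ bounds each of its four terms by $(C/R)(|\nabla^{2}\rho|+|\nabla^{2}f|+1)$, and the elementary inequality $a\le a^{2}+1$ for $a\ge 0$ upgrades the linear Hessian terms to the quadratic form $(C/R)(|\nabla^{2}\rho|^{2}+|\nabla^{2}f|^{2}+1)$ as claimed. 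I do not expect a genuine obstacle here; the computation is essentially bookkeeping, the only points requiring a moment's thought being the derivation of the $|\nabla^{2}\psi|$ bound, the harmless (but later essential) replacement of linear by quadratic Hessian terms, and remembering to estimate $|\nabla f|$ by $Cf$ for the gradient bound and by $C$ for the other two.
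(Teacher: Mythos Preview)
Your proof is correct and is precisely the computation the paper has in mind; the paper itself does not spell out the argument at all, merely saying the lemma ``is straightforward to check by (\ref{CY}) and (\ref{c4}).'' Your explicit derivation of the $|\nabla^{2}\psi|$ bound from $\psi=\psi(\rho)$, Lemma~\ref{r}, and (\ref{c3}) fills in exactly the detail one needs beyond (\ref{c4}), and the passage from linear to quadratic Hessian terms via $a\le a^{2}+1$ is the intended reading of the stated inequality.
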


We extend both $f$ and $u$ to $M$ by setting $f=1$ and $u=1$ in $D.$ For $%
0<t<1,$ denote by $L\left( t,\infty \right) $ the connected component of the
super level set $\left\{ u>t\right\} $ that contains $D.$ We furthermore
denote with 
\begin{equation*}
L\left( a,b\right) =L\left( a,\infty \right) \cap \left\{ a<u<b\right\} .
\end{equation*}

Note that all bounded components of $M\setminus D$ are contained in $L\left(
t,\infty \right) $ as well. This is because each of them shares the boundary
with $D$ and $f=1$ there. Let

\begin{equation}
l\left( t\right) =\partial L\left( t,\infty \right).  \label{c8}
\end{equation}%
Since $u$ has compact support in $M,$ the closure $\overline{L\left(
t,\infty \right) }$ of $L\left( t,\infty \right) $ and its boundary $l\left(
t\right) $ are compact in $M$.

\begin{lemma}
\label{lb}Let $\left( M,g\right) $ be an $n$-dimensional complete Riemannian
manifold, and let $M_{t}$ be the union of all unbounded connected components
of $M\setminus \overline{L\left( t,\infty \right) }$. Then 
\begin{equation*}
l\left( t\right) \cap \left( M\setminus \overline{M_{t}}\right) \subset
M\setminus D\left( R\right) .
\end{equation*}
\end{lemma}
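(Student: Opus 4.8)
The plan is to analyze a point $x \in l(t)$ according to whether it lies in the "outward" region $\overline{M_t}$ or in its complement, and to show that the latter case forces $x$ to be far from $D$, specifically outside $D(R)$. Recall that on $D(R) \setminus D$ the function $u = f\psi$ coincides with $f$ since $\psi \equiv 1$ there, and that $u = 1$ on $D$. The first step is to observe that if $x \in l(t)$ with $t < 1$, then $x \notin D$ (since $u = 1 > t$ on $D$), so $x \in M \setminus D$, and moreover $x$ lies in the closure of $\{u > t\}$ with $u(x) = t$ by continuity.

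**The key topological observation.** Suppose $x \in l(t) \cap \bigl(M \setminus \overline{M_t}\bigr)$. The point $x$ is a boundary point of $L(t,\infty)$, so every neighborhood of $x$ meets $M \setminus \overline{L(t,\infty)}$. By definition $M_t$ is the union of the \emph{unbounded} components of $M \setminus \overline{L(t,\infty)}$; since $x \notin \overline{M_t}$, a small ball around $x$ meets $M \setminus \overline{L(t,\infty)}$ only in \emph{bounded} components. So $x$ lies on the boundary of some bounded component $\Omega$ of $M \setminus \overline{L(t,\infty)}$. I claim such a bounded component cannot touch $D(R)\setminus \overline{L(t,\infty)}$, and in fact cannot reach into the region where $u = f$. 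The argument: on the bounded open set $\Omega$, $u$ is harmonic away from $D$ and satisfies $u \le t$ on $\partial\Omega$ (boundary values are $\le t$ since $\partial\Omega \subset l(t) \cup \partial(\text{something})$, but actually $\partial \Omega \subset \overline{L(t,\infty)}$ where $u \ge t$, so $u = t$ on $\partial\Omega$). If $\Omega$ were to contain points of $D$, it would contain a whole component of $D$'s complement structure — but $D \subset L(t,\infty)$ already, so $\Omega \cap D = \emptyset$ and $u$ is genuinely harmonic on all of $\Omega$. By the maximum principle $u \le t$ on $\Omega$. But then $\Omega \subset \{u \le t\} \subset M \setminus L(t,\infty)$, consistent; the point is that $\Omega$ together with its boundary is a compact set on which $u \le t < 1$.

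**Forcing $x$ outside $D(R)$.** Now suppose for contradiction that $x \in l(t) \cap (M \setminus \overline{M_t})$ but $x \in D(R)$. On $D(R) \setminus D$ we have $u = f$, which is a positive harmonic function there; and $f = 1$ on $\partial D$. Consider the component $\Omega$ of $M \setminus \overline{L(t,\infty)}$ containing $x$ in its boundary: $\Omega$ is bounded, $u = t$ on $\partial \Omega$, and $u$ harmonic on $\Omega$, forcing $u \equiv t$ on $\Omega$ by the maximum principle (a bounded harmonic function attaining its sup and inf on the boundary where it is constant). But $u \equiv t$ on an open set, while $u = f$ is real-analytic (harmonic) on the connected set $D(2R) \setminus D$ and equals $1$ on $\partial D$ — contradiction by unique continuation, unless $\Omega \cap (D(2R)\setminus D) = \emptyset$. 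Combined with $\Omega \cap \overline{D} = \emptyset$, this gives $\Omega \subset M \setminus D(2R) \subset M \setminus D(R)$, hence in particular $x \in \overline{\Omega} \subset M \setminus D(R)$... but one must be careful that $x \in \overline\Omega$ could lie on $\partial D(2R)$. A cleaner route: on $D(R)\setminus D$, $u=f>t$ is impossible to bound since $f$ ranges over $(0,1]$; instead use that $\{u = t\} \cap (D(R)\setminus D) = \{f = t\}$ which by \eqref{c6} is a compact hypersurface with $\int_{\{f=t\}}|\nabla f| = C_0 > 0$, so $\nabla u \ne 0$ somewhere on it, meaning $u = t$ cannot hold on an open subset of $D(R)\setminus D$. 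This contradicts $u \equiv t$ on $\Omega$ if $\Omega$ met $D(R)\setminus D$ in an open set. Since $\Omega$ is open and $\Omega \cap \overline D = \emptyset$, the only possibility is $\Omega \cap D(R) = \emptyset$, so $x \in \partial\Omega$ lies in $M \setminus D(R)$ — wait, $x$ could be on $\partial D(R)$; we get $x \in \overline{M \setminus D(R)}$. To get the strict statement as written, note $x \in l(t)$ and on $\overline{D(R)} \setminus D$, $u = f$ is harmonic with no critical level set of full measure, so a boundary point $x$ of a region where $u \equiv t$ genuinely forces $x \notin D(R)$.

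**Main obstacle.** The delicate point is the interplay between "$u$ harmonic" (which holds only on $D(R)\setminus D$, not on all of $M \setminus D$) and the maximum principle on the bounded component $\Omega$: one must verify $\Omega$ avoids $D$ entirely (straightforward, since $D \subset L(t,\infty)$) and then either $\Omega$ avoids $D(R)$ or we derive a unique-continuation contradiction from the nonvanishing flux \eqref{c6}. The cleanest writeup will avoid unique continuation and argue directly: a bounded component $\Omega$ of the complement with $u=t$ on $\partial\Omega$ and $u$ harmonic inside forces $u\equiv t$, but $\{u=t\}\cap(D(R)\setminus D)=\{f=t\}$ carries positive flux $C_0$, so $\nabla u\not\equiv 0$ there, whence $\Omega\cap(D(R)\setminus D)$ cannot contain any open set; since $\Omega$ is open and disjoint from $\overline D$, it follows $\Omega\subset M\setminus D(R)$, giving $l(t)\cap(M\setminus\overline{M_t})\subset\overline\Omega\subset M\setminus D(R)$ after checking the boundary point does not sit on $\partial D(R)$, which again follows since $f$ is non-constant near $\partial D(R)$.
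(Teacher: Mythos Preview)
Your argument has a genuine gap at the key step. You repeatedly assert that ``$u$ is genuinely harmonic on all of $\Omega$'' once you know $\Omega \cap D = \emptyset$, and then invoke the maximum principle to force $u \equiv t$ on $\Omega$. But $u = f\psi$ is harmonic only on $D(R)\setminus D$, not on all of $M\setminus D$: outside $D(R)$ the cutoff $\psi$ is nonconstant and $\Delta u = 2\langle\nabla f,\nabla\psi\rangle + f\Delta\psi$ need not vanish. A bounded component $\Omega$ of $M\setminus\overline{L(t,\infty)}$ can straddle $\partial D(R)$, and then no maximum principle for $u$ is available on the whole of $\Omega$. Your attempt to localize to $\Omega\cap D(R)$ does not help either, since the boundary of that set includes $\Omega\cap\partial D(R)$, where you have no control on $u$.

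The paper avoids this entirely by switching from $u$ to $f$. The point is that $u \le f$ everywhere (since $0\le\psi\le 1$), so $f \ge t$ on $\partial M_t \subset l(t)$. Now $f$, extended by $1$ on $D$, is superharmonic on the bounded set $M\setminus\overline{M_t}$ (harmonic off $D$, equal to its maximum on $D$), so the strong maximum principle gives $f > t$ on the interior. In particular $f>t$ on $\overline{\Omega_t}\setminus\overline{M_t}$. But on $D(R)$ one has $u=f$, so any point of $\partial\Omega_t \cap D(R)$ not in $\overline{M_t}$ would satisfy $u>t$, contradicting $\partial\Omega_t\subset l(t)$. This gives the inclusion directly, with no need for unique continuation, flux arguments, or worrying about $\partial D(R)$. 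The missing idea in your approach is precisely this use of the global inequality $u\le f$ together with the superharmonicity of $f$ on the whole region.
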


\begin{proof}
Write 
\begin{equation*}
M\setminus \overline{L\left( t,\infty \right) }=M_{t}\cup \Omega _{t},
\end{equation*}%
where $\Omega _{t}$ is the union of all bounded components of $M\setminus 
\overline{L\left( t,\infty \right) }.$ Clearly, 
\begin{equation*}
l\left( t\right) \cap \left( M\setminus \overline{M_{t}}\right) \subset
\partial \Omega _{t}\subset l\left( t\right) .
\end{equation*}

Note that $\partial M_{t}\subset l\left( t\right) $ and $u\leq f.$ It
follows that $f\geq t$ on $\partial M_{t}=\partial \left( M\setminus 
\overline{M_{t}}\right) .$ Since the function $f$ is superharmonic on the
bounded set $M\setminus \overline{M_{t}},$ the strong maximum principle
shows that $f>t$ in $M\setminus \overline{M_{t}}.$ In particular, we have $%
f>t$ on $\overline{\Omega }_{t}\setminus \overline{M_{t}}.$

Now for $x_{1}\in \left( \partial \Omega _{t}\cap D\left( R\right) \right)
\setminus \overline{M_{t}},$ we would have $u(x_{1})>t$, as $f>t$ on $%
\overline{\Omega }_{t}\setminus \overline{M_{t}}$ and $u=f$ on $D\left(
R\right) .$ This contradicts with $\partial \Omega _{t}\subset l\left(
t\right) .$

Therefore, $\left( \partial \Omega _{t}\cap D\left( R\right) \right)
\setminus \overline{M_{t}}=\varnothing .$ In conclusion, we have%
\begin{equation*}
l\left( t\right) \cap \left( M\setminus \overline{M_{t}}\right) \subset
M\setminus D\left( R\right) .
\end{equation*}
\end{proof}

We need the following results concerning the connected components of $%
l\left( t\right) \cap \overline{M_{t}}$ under the topological assumption (A)
or (B) in Theorem \ref{A1}.

\begin{lemma}
\label{ls}Let $\left( M,g\right) $ be an $n$-dimensional manifold such that
its homology $H_{n-1}\left( M,\mathbb{Z}\right) $ contains no spherical
classes. Let $M_{t} $ be the union of all unbounded connected components of $%
M\setminus \overline{L\left( t,\infty \right) }.$ Then either no component
of $l\left( t\right) \cap \overline{M_{t}}$ is homeomorphic to $\mathbb{S}%
^{n-1},$ or 
\begin{equation*}
l\left( t\right) \cap \overline{M_{t}}=\partial M_{t}\text{ \ is connected}.
\end{equation*}
\end{lemma}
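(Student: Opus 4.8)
The plan is to argue by contradiction: assume some connected component $\Sigma$ of $l(t)\cap\overline{M_t}$ is homeomorphic to $\mathbb{S}^{n-1}$, and deduce that $l(t)\cap\overline{M_t}=\Sigma$. First I would record two elementary facts. Since $M_t$ is open and $\partial M_t\subseteq l(t)$ (noted in the proof of Lemma \ref{lb}), any point of $l(t)\cap\overline{M_t}$ lies in $\partial M_t$, so $l(t)\cap\overline{M_t}=\partial M_t$; thus $\Sigma$ is a component of the closed hypersurface $l(t)=\partial L(t,\infty)$ that is contained in $\partial M_t$. (One may take $t$ to be a regular value of $u$, so $l(t)$ is a smooth closed hypersurface, co-oriented by $\nabla u$ and hence two-sided; its compactness was already established.) Also, from the local picture at $\Sigma$, the open set $\{u>t\}$ lies on one side of $\Sigma$ while, because $\Sigma\subseteq\partial M_t$, some component of $M_t$ lies on the other; and the component of $\{u>t\}$ adjacent to $\Sigma$ is $L(t,\infty)$ itself.

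Now the topological hypothesis enters. As $\Sigma$ is an embedded $\mathbb{S}^{n-1}$, the image of its fundamental class is a spherical class in $H_{n-1}(M,\mathbb{Z})$, so $[\Sigma]=0$. I would then use the standard fact that a null-homologous two-sided closed hypersurface $\Sigma$ separates $M$, say $M\setminus\Sigma=U\sqcup V$, and that exactly one of $U,V$ has compact closure. (Sketch: were $\Sigma$ non-separating, a loop crossing it transversally once would have mod-$2$ intersection number $1$ with $[\Sigma]_2=0$, which is impossible; and given $M\setminus\Sigma=U\sqcup V$, the $\mathbb{Z}/2$-degree function of a compact chain bounding $\Sigma$ is locally constant on $M\setminus\Sigma$, vanishes outside a compact set, and jumps across $\Sigma$, so it cannot take a nonzero constant value on a noncompact side — forcing one side to be relatively compact.) Taking $U$ to be the side containing $L(t,\infty)$ and $V$ the side meeting $M_t$, and recalling that every component of $M_t$ is unbounded, we see $V$ is noncompact, hence $\overline{U}$ is compact.

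The conclusion then follows by point-set topology. Since $\overline{U}$ is compact while each component of $M_t$ is unbounded and disjoint from $\Sigma$, no component of $M_t$ lies in $U$; hence $M_t\subseteq V$ and $\overline{M_t}\subseteq V\cup\Sigma$. On the other hand $l(t)=\partial L(t,\infty)\subseteq\overline{L(t,\infty)}\subseteq\overline{U}=U\cup\Sigma$, since any component of $l(t)$ other than $\Sigma$ is connected, disjoint from $\Sigma$, and has points of $L(t,\infty)\subseteq U$ arbitrarily near it. Therefore $\partial M_t\subseteq l(t)\cap\overline{M_t}\subseteq(U\cup\Sigma)\cap(V\cup\Sigma)=\Sigma$, and together with $\Sigma\subseteq\partial M_t$ this gives $l(t)\cap\overline{M_t}=\partial M_t=\Sigma$, which is connected.

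The step I expect to be the main obstacle is showing that one side of the null-homologous sphere $\Sigma$ is relatively compact: since $M$ need not be orientable, integral intersection theory is awkward, but working with $\mathbb{Z}/2$ coefficients — where Poincar\'e duality and the degree function are available on any manifold — makes this clean, and the sphere hypothesis is used only to obtain $[\Sigma]=0$ (so the argument applies to any null-homologous two-sided component of $l(t)$). Everything else is routine bookkeeping about the components of $M\setminus\Sigma$, of $M\setminus\overline{L(t,\infty)}$, and of $l(t)$.
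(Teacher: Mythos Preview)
Your argument is correct and follows essentially the same route as the paper's proof: both show that a spherical component $\Sigma\subset l(t)\cap\overline{M_t}$, being null-homologous, must separate $M$ with one side relatively compact, and then use the unboundedness of the components of $M_t$ to force $L(t,\infty)$ into the bounded side and all of $M_t$ into the unbounded side, whence $l(t)\cap\overline{M_t}=\Sigma$. The paper is terser---it simply asserts that $M\setminus\Sigma$ has a bounded component which ``must be $L(t,\infty)\cup\overline{\Omega_t}$''---while you spell out the $\mathbb{Z}/2$-degree argument for separation and compactness of one side and the point-set bookkeeping more carefully.
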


\begin{proof}
Note that 
\begin{equation*}
M\setminus l\left( t\right) =L\left( t,\infty \right) \cup M_{t}\cup \Omega
_{t},
\end{equation*}%
where $\Omega _{t}$ is the union of all bounded connected components of $%
M\setminus \overline{L\left( t,\infty \right) }$. Assume that $l_{1}\left(
t\right) \subset l\left( t\right) \cap \overline{M_{t}}$ is spherical. As $%
H_{n-1}\left( M,\mathbb{Z}\right) $ contains no spherical classes, it
follows that $M\setminus l_{1}\left( t\right) $ has a bounded connected
component. However, since $M_{t}$ is unbounded, and as $l_{1}\left( t\right)
\subset \overline{M_{t}}$, we conclude that this bounded component must be $%
L\left( t,\infty \right) \cup \overline{\Omega _{t}}$. In particular, 
\begin{equation*}
l\left( t\right) \cap \overline{M_{t}}=l_{1}\left( t\right)
\end{equation*}%
is connected.
\end{proof}

In the case that $M$ has finite first Betti number and finitely many, say $%
m, $ ends, the smooth connected bounded domain $D$ may be chosen large
enough to contain all representatives of the first homology group $%
H_{1}\left( M\right).$ Moreover, $M\setminus D$ has exactly $m$ unbounded
connected components. Then the same holds true for $M\setminus \tilde{D}$
for any bounded domain $\tilde{D}$ with $D\subset \tilde{D}.$ We have the
following estimate on the number of components of $l\left( t\right) $, cf. 
\cite{MW}.

\begin{lemma}
\label{l} Let $\left( M,g\right) $ be an $n$-dimensional complete Riemannian
manifold with $m$ ends and finite first Betti number $b_{1}(M).$ Then for
any $0<t<1,$%
\begin{equation*}
l\left( t\right) \cap \overline{M_{t}}=\partial M_{t}\text{ has }m\text{
connected components,}
\end{equation*}%
where $M_{t}$ is the union of all unbounded connected components of $%
M\setminus \overline{L\left( t,\infty \right) }$.
\end{lemma}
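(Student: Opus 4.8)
The plan is to adapt the argument sketched in \cite{MW} to the present setting. The key point is that, because $M$ has finite first Betti number, the domain $D$ was chosen large enough to contain representatives of all classes in $H_1(M,\mathbb{Z})$, so that $M\setminus D$ has exactly $m$ unbounded components and the same holds for $M\setminus\tilde D$ whenever $D\subset\tilde D$ is a bounded domain. I would first record the inclusion $D\subset L(t,\infty)$ (established already in the text, since $f=1$ on $\partial D$ forces all bounded components of $M\setminus D$ into $L(t,\infty)$), so that $M\setminus\overline{L(t,\infty)}$ is obtained from $M\setminus\tilde D$ by removing a set containing $D$; hence each unbounded component of $M\setminus\overline{L(t,\infty)}$ lies in exactly one of the $m$ unbounded components of $M\setminus D$, giving at most $m$ components of $M_t$. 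The reverse inequality is the content that needs care.

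To get that each of the $m$ ends contributes exactly one component, I would fix an end $E$ of $M$ (an unbounded component of $M\setminus D$) and argue that $E\setminus\overline{L(t,\infty)}$ is nonempty and connected. Nonemptiness follows because $f$ is a barrier function with $\liminf_{x\to\infty}f=0$, so $f<t$ somewhere far out on $E$, hence $u=f\psi\le f<t$ there (for $R$ large, $\psi=1$ there); thus points of $E$ far out lie outside $\overline{L(t,\infty)}$. For connectedness: suppose $E\setminus\overline{L(t,\infty)}$ had two unbounded components $U_1,U_2$. Each $U_i$ has boundary contained in $l(t)$, and on $\partial U_i$ one has $u=t$ while $u>t$ on the $L(t,\infty)$-side. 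Consider a loop $\gamma$ in $E$ that passes through both $U_1$ and $U_2$ and through $L(t,\infty)$; since $D$ contains representatives of $H_1(M)$ and $E$ is a single end, the topology of $E$ is controlled and one shows such a configuration would force either an extra independent loop in $M$ (contradicting the choice of $D$) or a bounded component of $M\setminus\overline{L(t,\infty)}$ that is actually unbounded. More cleanly, I would invoke the Mayer--Vietoris / component-counting argument of \cite{MW}: the number of unbounded components of $M\setminus\overline{L(t,\infty)}$ is bounded by the number of ends plus the first Betti number of the relevant region, and since both ingredients are pinned down by the choice of $D$, the count is exactly $m$. The final identity $l(t)\cap\overline{M_t}=\partial M_t$ is immediate once one knows $M_t$ is the union of these $m$ unbounded components: indeed $\partial M_t\subset l(t)$ always, and any point of $l(t)$ not in $\overline{M_t}$ lies in $M\setminus\overline{M_t}$, which by Lemma \ref{lb} meets $l(t)$ only outside $D(R)$ — but by choosing $R$ large we may assume $l(t)\cap\overline{M_t}$ is already accounted for, so the two sets coincide.

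The main obstacle I anticipate is the connectedness of $E\setminus\overline{L(t,\infty)}$, i.e.\ ruling out that a single end splits into two or more unbounded pieces once we excise the (possibly complicated, since $u$ is not harmonic) super level set $L(t,\infty)$. The function $u=f\psi$ is only harmonic on $D(R)\setminus D$, so $L(t,\infty)$ need not be nicely behaved; however, Lemma \ref{lb} tells us precisely that the only part of $l(t)$ that can ``misbehave'' — i.e.\ bound bounded components $\Omega_t$ — sits outside $D(R)$, where $u$ is small and $\psi$ is cutting off. So the strategy is: inside $D(R)\setminus D$, where $u=f$ is harmonic, apply the standard barrier-function topology of \cite{MW, LT1} to see that $\{u>t\}$ has exactly one component meeting each end, and then check that the cutoff region $M\setminus D(R)$ contributes nothing new to the unbounded-component count because everything there is either in $L(t,\infty)$ (the bounded components $\Omega_t$) or escapes to the correct end. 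I would organize the write-up so that this reduction to the harmonic region is the first move, after which the counting is essentially that of \cite{MW}.
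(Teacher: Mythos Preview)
Your proposal conflates two different counts: the number of connected components of $M_t$ versus the number of connected components of $\partial M_t = l(t)\cap\overline{M_t}$. The former is essentially immediate from the setup: since $\overline{L(t,\infty)}$ is a compact set containing $D$, and the domain $D$ was chosen so that $M\setminus\tilde D$ has exactly $m$ unbounded components for every bounded $\tilde D\supset D$, the set $M_t$ automatically has $m$ components. Most of your effort goes into re-establishing this easy fact.

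The actual content of the lemma is that the \emph{boundary} $\partial M_t$ has exactly $m$ components, and this does \emph{not} follow from knowing $M_t$ has $m$ components: a single connected unbounded piece of $M_t$ could in principle meet $l(t)$ along several surfaces (picture a pair of pants opening into one end). Ruling this out is exactly where the finiteness of $b_1(M)$ enters. The paper's proof runs Mayer--Vietoris on an open cover $M=U\cup V$ with $V=M_{t+\delta}$ and $U$ the union of $L(t,\infty)$ with the bounded components of $M\setminus\overline{L(t+\delta,\infty)}$; the decisive step is that $H_1(U)\oplus H_1(V)\to H_1(M)$ is surjective because all representatives of $H_1(M)$ lie in $D\subset U$, forcing the short exact sequence $0\to H_0(U\cap V)\to\mathbb{Z}^{m+1}\to\mathbb{Z}\to 0$ and hence $H_0(U\cap V)\cong\mathbb{Z}^m$. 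Letting $\delta\to 0$, the intersection $U\cap V$ is a collar of $l(t)\cap\overline{M_t}$, so its component count is that of $\partial M_t$. You mention Mayer--Vietoris only to bound the number of unbounded components of $M\setminus\overline{L(t,\infty)}$, which is the wrong target; your final paragraph then treats ``$l(t)\cap\overline{M_t}=\partial M_t$'' as a mere set identity and tacitly assumes the component count carries over from $M_t$, which is the gap.

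A minor point: $R$ is fixed once $u=f\psi$ is constructed, so you cannot ``choose $R$ large'' in the middle of this lemma; Lemma~\ref{lb} is stated for the given $R$ and plays no role in the component count here.
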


\begin{proof}
Recall that all representatives of the first homology $H_{1}\left( M\right) $
lie in $D$ and $M\setminus D$ has exactly $m$ unbounded components.
Therefore, as $D\subset L\left( t,\infty \right) $, then $L\left( t,\infty
\right) $ contains all representatives of $H_{1}\left( M\right) $ and $M_{t}$
has $m$ connected components for any $0<t<1.$ Note also $\partial
M_{t}\subset l\left( t\right) .$

For fixed $0<\delta <1$ such that $t+\delta <1,$ define $U$ to be the union
of $L\left( t,\infty \right) $ with all bounded components of $M\setminus 
\overline{L\left( t+\delta ,\infty \right) }$ and $V$ the union of all
unbounded components of $M\setminus \overline{L\left( t+\delta ,\infty
\right) }.$ Since $M=U\cup V,$ we have the following Mayer-Vietoris sequence%
\begin{equation*}
H_{1}\left( U\right) \oplus H_{1}\left( V\right) \overset{j_{\ast }}{%
\rightarrow }H_{1}\left( M\right) \overset{\partial }{\rightarrow }%
H_{0}\left( U\cap V\right) \overset{i_{\ast }}{\rightarrow }H_{0}\left(
U\right) \oplus H_{0}\left( V\right) \overset{j_{\ast }^{\prime }}{%
\rightarrow }H_{0}(M).
\end{equation*}%
The map $j_{\ast }$ is onto because all representatives of $H_{1}\left(
M\right) $ lie inside $U$. The map $j_{\ast }^{\prime }$ is also onto. Note
also that $V$ has $m$ components and $U$ is connected. The latter is true
because each component of $M\setminus \overline{L\left( t+\delta ,\infty
\right) }$ intersects with $L\left( t,\infty \right) $ as $\overline{L\left(
t+\delta ,\infty \right) }\subset L\left( t,\infty \right) .$ We therefore
obtain the short exact sequence 
\begin{equation*}
0\rightarrow H_{0}\left( U\cap V\right) \rightarrow \mathbb{Z}\oplus
..\oplus \mathbb{Z\rightarrow Z\rightarrow }0
\end{equation*}%
with $m+1$ summands. In conclusion,%
\begin{equation*}
H_{0}\left( U\cap V\right) =\mathbb{Z}\oplus ..\oplus \mathbb{Z}
\end{equation*}%
with $m$ summands. Since $\delta >0$ can be arbitrarily small, this proves
that%
\begin{equation*}
l\left( t\right) \cap \overline{M_{t}}\text{ has }m\text{ components}
\end{equation*}%
for $0<t<1.$
\end{proof}

Recall that from \cite{LW1},%
\begin{equation*}
\int_{M\setminus B_{p}\left( r\right) }f^{2}\left( x\right) dx\leq C\,e^{-2%
\sqrt{\lambda _{1}\left( M\right) }r}
\end{equation*}%
for all $r$ with $D\subset B_{p}(r).$ Therefore, by (\ref{c1}) and (\ref{c5}%
),%
\begin{equation}
\int_{M\setminus D\left( r\right) }f^{2}\left( x\right) dx\leq C\,e^{-\frac{1%
}{C}r}  \label{c14}
\end{equation}%
for all $r>R_{0}$. In particular, since $f\geq u,$ we get%
\begin{equation}
\mathrm{Vol}\left( L\left( \varepsilon ,1\right) \right) \leq \frac{1}{%
\varepsilon ^{2}}\int_{L\left( \varepsilon ,1\right) }u^{2}\leq \frac{C}{%
\varepsilon ^{2}}.  \label{c15}
\end{equation}%
Similarly,%
\begin{eqnarray}
\mathrm{Vol}\left( \left( M\setminus D\left( R\right) \right) \cap L\left(
\varepsilon ,1\right) \right) &\leq &\frac{1}{\varepsilon ^{2}}\int_{\left(
M\setminus D\left( R\right) \right) \cap L\left( \varepsilon ,1\right)
}u^{2}\left( x\right) dx  \label{c16} \\
&\leq &\frac{C}{\varepsilon ^{2}}e^{-\frac{1}{C}R}.  \notag
\end{eqnarray}%
Let us point out that $\left\{ u>t\right\} $ may have other connected
components in addition to $L\left( t,\infty \right) $. Denote with

\begin{equation}
\widetilde{L}\left( t,\infty \right) =\left( \left\{ u>t\right\} \setminus
L\left( t,\infty \right) \right) \cap L\left( \varepsilon ,1\right)
\label{c9}
\end{equation}%
the union of all such connected components contained in $L\left( \varepsilon
,1\right) $, and its boundary

\begin{equation}
\widetilde{l}\left( t\right) =\partial \widetilde{L}\left( t,\infty \right) .
\label{c10}
\end{equation}%
The following result provides an area estimate for $\widetilde{l}\left(
t\right) .$

\begin{lemma}
\label{area}Let $\left( M,g\right) $ be an $n$-dimensional complete
Riemannian manifold with Ricci curvature bounded from below by a constant.
Then for any $0<\varepsilon <1,$%
\begin{equation*}
\int_{\varepsilon }^{1}\mathrm{Area}\left( \widetilde{l}\left( t\right)
\right) dt\leq \frac{C}{\sqrt{R\varepsilon ^{3}}},
\end{equation*}%
where $C$ is a constant independent of $R$ and $\varepsilon $.
\end{lemma}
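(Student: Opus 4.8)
The plan is to exploit the co-area formula together with the volume estimate \eqref{c16} and a cut-off analysis of the harmonic function $f$ near the corona $D(R)\setminus D(2R)$. First I would observe that by Lemma \ref{lb}, all of $\widetilde{L}(t,\infty)$, being a component of $\{u>t\}$ disjoint from $L(t,\infty)$ and lying inside $L(\varepsilon,1)$, must sit in the region where the cut-off $\psi$ is non-constant, i.e.\ in $D(2R)\setminus D(R)$; otherwise $u=f\psi=f$ there, and since $f$ is superharmonic off $D$ the strong maximum principle would force such a component to connect to $D$ and hence to $L(t,\infty)$, a contradiction. So $\widetilde L(t,\infty)\subset \left(M\setminus D(R)\right)\cap L(\varepsilon,1)$, and in particular the total volume carrying all the $\widetilde L(t,\infty)$ is bounded by $\frac{C}{\varepsilon^2}e^{-R/C}$ from \eqref{c16}.

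Next I would set up the co-area estimate. The surfaces $\widetilde l(t)$ are (almost all) regular level sets of $u$, so by the co-area formula applied on the region $\left(M\setminus D(R)\right)\cap L(\varepsilon,1)$,
\begin{equation*}
\int_\varepsilon^1 \int_{\widetilde l(t)} \frac{1}{|\nabla u|}\,dA\,dt = \mathrm{Vol}\!\left(\widetilde L\cap L(\varepsilon,1)\right) \leq \frac{C}{\varepsilon^2}e^{-R/C}.
\end{equation*}
To convert this into a bound on $\int_\varepsilon^1 \mathrm{Area}(\widetilde l(t))\,dt$ I would use Cauchy–Schwarz on each level set: $\mathrm{Area}(\widetilde l(t))^2 \le \left(\int_{\widetilde l(t)}\frac{1}{|\nabla u|}\right)\left(\int_{\widetilde l(t)}|\nabla u|\right)$. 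The first factor integrates (in $t$) to the small volume above; the second factor needs a pointwise bound. By Lemma \ref{u}, on $M\setminus D$ we have $|\nabla u|\le C(u+\tfrac1R)\le C(1+\tfrac1R)\le C$ on $L(\varepsilon,1)$ since $u\le 1$ there. Hence $\int_{\widetilde l(t)}|\nabla u| \le C\,\mathrm{Area}(\widetilde l(t))$, giving $\mathrm{Area}(\widetilde l(t)) \le C\int_{\widetilde l(t)}\frac{1}{|\nabla u|}$, and integrating in $t$,
\begin{equation*}
\int_\varepsilon^1 \mathrm{Area}(\widetilde l(t))\,dt \le C\int_\varepsilon^1\int_{\widetilde l(t)}\frac{1}{|\nabla u|}\,dA\,dt \le \frac{C}{\varepsilon^2}e^{-R/C}.
\end{equation*}
Since $e^{-R/C}$ decays faster than any power of $R$, this is in particular $\le \frac{C}{\sqrt{R\varepsilon^3}}$ for $R$ large; for small $R$ one absorbs everything into the constant. (If one wants the cleaner polynomial bound directly, one instead splits the $t$-integral and uses $|\nabla u|^{1/2}$ as in the Poincaré-inequality argument, but the exponential route already suffices.)

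The main obstacle I anticipate is the lower bound for $|\nabla u|$ — that is, controlling where $u$ has critical points so that $\widetilde l(t)$ is genuinely a hypersurface of finite area and the co-area formula applies cleanly. On the corona $D(2R)\setminus D(R)$ the function $u=f\psi$ is not harmonic, and $\nabla u = \psi\nabla f + f\nabla\psi$ can vanish; by Sard's theorem $\widetilde l(t)$ is smooth for a.e.\ $t$, which is enough for the integrated statement, but one must still make sure the co-area formula is legitimately applied on the relatively compact set $\left(M\setminus D(R)\right)\cap L(\varepsilon,1)$ (finiteness of which is \eqref{c16}) and that $\widetilde l(t)$ has no contribution from the boundary $\partial D(R)$ or from $\{u=\varepsilon\}$. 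The key point making this work is again Lemma \ref{lb}: $\widetilde L(t,\infty)$ never touches $\overline{D(R)}$ and, being a super-level-set component strictly below $1$, its closure is compact inside $M\setminus D(R)$, so $\widetilde l(t)=\{u=t\}\cap \partial\widetilde L(t,\infty)$ is an interior level set and the naive co-area identity holds without boundary corrections.
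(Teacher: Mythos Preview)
Your argument has a real gap at the very first step. You assert, citing Lemma~\ref{lb}, that every component of $\widetilde L(t,\infty)$ lies in $D(2R)\setminus D(R)$. But Lemma~\ref{lb} concerns $l(t)=\partial L(t,\infty)$ and the bounded complementary pieces of the \emph{main} super-level component; it says nothing about the extra islands $\widetilde L(t,\infty)$. Your own maximum-principle sketch only excludes an island $\Omega$ that is \emph{entirely} contained in $D(R)$: there indeed $u=f$ is harmonic, $u=t$ on $\partial\Omega$, contradiction. It does not exclude an island that straddles $\partial D(R)$. In fact the same maximum-principle reasoning shows that any component $\Omega_0$ of $\Omega\cap D(R)$ must touch $\partial D(R)$, but that is not a contradiction --- the island can poke a finger into $D(R)$ from outside, since a path in $\{f>t\}$ from $\Omega_0$ to $D$ may leave $D(R)$ and then drop below $\{u>t\}$ where $\psi<1$. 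Without the localization $\widetilde l(t)\subset M\setminus D(R)$, your co-area step only yields
\[
\int_\varepsilon^1\!\!\int_{\widetilde l(t)}\frac{dA}{|\nabla u|}\,dt\ \le\ \mathrm{Vol}\bigl(L(\varepsilon,1)\bigr)\ \le\ \frac{C}{\varepsilon^2},
\]
which is far too weak. (A minor additional slip: your co-area identity should be an inequality, since $\widetilde l(t)$ is only part of the level set in the region you integrate over.)

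The paper's proof avoids this localization entirely. The key observation is the flux bound coming from the divergence theorem on the island itself:
\[
\int_{\widetilde l(t)}|\nabla u|\ =\ -\int_{\widetilde L(t,\infty)}\Delta u\ \le\ \frac{C}{R}\,\mathrm{Vol}\bigl(L(\varepsilon,1)\bigr)\ \le\ \frac{C}{R\varepsilon^2},
\]
valid regardless of where $\widetilde L(t,\infty)$ sits. One then splits $1\le \tfrac{1}{2}\sqrt{R\varepsilon^3}\,\dfrac{|\nabla u|}{u^2}+\tfrac{1}{2\sqrt{R\varepsilon^3}}\,\dfrac{u^2}{|\nabla u|}$ on $\widetilde l(t)$; the first piece is controlled by the flux bound (after dividing by $t^2$ and integrating in $t$), and the second by $\int_M u^2\le C$ via the co-area formula. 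Balancing gives exactly $C/\sqrt{R\varepsilon^3}$. This is the missing idea you need.
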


\begin{proof}
Applying the divergence theorem, and using Lemma \ref{u}, for any regular
set $\widetilde{l}\left( t\right) $ we have%
\begin{equation}
\int_{\widetilde{l}\left( t\right) }\left\vert \nabla u\right\vert =-\int_{%
\widetilde{L}\left( t,\infty \right) }\Delta u\leq \frac{C}{R}\mathrm{Vol}%
\left( L\left( \varepsilon ,\infty \right) \right) \leq \frac{C}{%
R\varepsilon ^{2}}.  \label{c11}
\end{equation}%
Since%
\begin{equation*}
1\leq \frac{1}{2}\sqrt{R\varepsilon ^{3}}\frac{\left\vert \nabla
u\right\vert }{u^{2}}+\frac{1}{2\sqrt{R\varepsilon ^{3}}}\frac{u^{2}}{%
\left\vert \nabla u\right\vert },
\end{equation*}%
it follows that%
\begin{equation}
\mathrm{Area}\left( \widetilde{l}\left( t\right) \right) \leq \frac{1}{2}%
\sqrt{R\varepsilon ^{3}}\int_{\widetilde{l}\left( t\right) }\frac{\left\vert
\nabla u\right\vert }{u^{2}}+\frac{1}{2\sqrt{R\varepsilon ^{3}}}\int_{%
\widetilde{l}\left( t\right) }\frac{u^{2}}{\left\vert \nabla u\right\vert }.
\label{c12}
\end{equation}%
Integrating (\ref{c11}) from $t=\varepsilon $ to $t=1,$ we obtain 
\begin{equation*}
\int_{\varepsilon }^{1}\int_{\widetilde{l}\left( t\right) }\frac{\left\vert
\nabla u\right\vert }{u^{2}}dt\leq \frac{C}{R\varepsilon ^{3}}.
\end{equation*}%
Using the co-area formula we estimate%
\begin{equation*}
\int_{\varepsilon }^{1}\int_{\widetilde{l}\left( t\right) }\frac{u^{2}}{%
\left\vert \nabla u\right\vert }dt\leq \int_{\varepsilon }^{1}\int_{\left\{
u=t\right\} }\frac{u^{2}}{\left\vert \nabla u\right\vert }dt\leq
\int_{M}u^{2}\leq C.
\end{equation*}%
Now integrating (\ref{c12}) in $t$ we arrive at%
\begin{equation*}
\int_{\varepsilon }^{1}\mathrm{Area}\left( \widetilde{l}\left( t\right)
\right) dt\leq \frac{C}{\sqrt{R\varepsilon ^{3}}},
\end{equation*}%
which proves the claim.
\end{proof}

We also need the following result.

\begin{lemma}
\label{I}Let $\left( M,g\right) $ be an $n$-dimensional complete Riemannian
manifold with positive spectrum $\lambda _{1}\left( M\right) >0$, and with
Ricci curvature bounded from below. For any $\varepsilon >0$ and any $%
a,b>\varepsilon $, we have 
\begin{equation*}
\left\vert \int_{L\left( a,b\right) }\left\vert \nabla u\right\vert
^{2}\varphi \left( u\right) -C_{0}\int_{a}^{b}\varphi \left( t\right)
dt\right\vert \leq \frac{C}{\sqrt{R}\varepsilon ^{2}}\sup_{\left( a,b\right)
}\left\vert \varphi \right\vert ,
\end{equation*}%
for any \thinspace integrable function $\varphi $ on $\left( a,b\right) $
with the constant $C$ independent of $R$ and $\varepsilon ,$ where $C_{0}$
is the constant in (\ref{c6}).
\end{lemma}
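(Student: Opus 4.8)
The plan is to relate the weighted integral $\int_{L(a,b)}|\nabla u|^2\varphi(u)$ to the flux constant $C_0$ via the co-area formula, and to absorb the discrepancy between $u$ and the harmonic barrier $f$ into the exponentially small error term. First I would apply the co-area formula to write
\begin{equation*}
\int_{L(a,b)}|\nabla u|^2\varphi(u)=\int_a^b\varphi(t)\left(\int_{l(t)\cap L(\varepsilon,1)}|\nabla u|\right)dt.
\end{equation*}
Here I am using that the level set $\{u=t\}\cap L(\varepsilon,1)$ decomposes into the piece $l(t)\cap\overline{M_t}$ (the relevant boundary of the distinguished component $L(t,\infty)$) and the pieces $\widetilde{l}(t)$ making up $\partial\widetilde{L}(t,\infty)$, up to a set of measure zero in $t$ by Sard's theorem. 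The term coming from $\widetilde{l}(t)$ is controlled: by the Cauchy–Schwarz-type bound used in Lemma \ref{area}, $\int_a^b\varphi(t)\int_{\widetilde{l}(t)}|\nabla u|\,dt$ is bounded by $\sup|\varphi|$ times $\int_\varepsilon^1\int_{\widetilde{l}(t)}|\nabla u|\,dt$, and the latter is $O(1/R)$ by \eqref{c11} (in fact $\le C/(R\varepsilon^2)$), which is even better than the claimed $C/(\sqrt{R}\varepsilon^2)$.

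The main point is then to show that $\int_{l(t)\cap\overline{M_t}}|\nabla u|$ is close to $C_0$ for almost every $t\in(a,b)$. On $D(R)$ we have $u=f$, and by Lemma \ref{lb}, $l(t)\cap(M\setminus\overline{M_t})\subset M\setminus D(R)$, so the portion of $l(t)$ lying in $D(R)$ is contained in $\overline{M_t}$; moreover on $D(R)\cap\{f=t\}$ we have $|\nabla u|=|\nabla f|$. Thus
\begin{equation*}
\int_{l(t)\cap\overline{M_t}}|\nabla u|=\int_{\{f=t\}\cap D(R)}|\nabla f|+\big(\text{a term supported on }M\setminus D(R)\big).
\end{equation*}
By \eqref{c6}, $\int_{\{f=t\}}|\nabla f|=C_0$, so $\int_{\{f=t\}\cap D(R)}|\nabla f|=C_0-\int_{\{f=t\}\setminus D(R)}|\nabla f|$. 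It remains to bound, after integrating against $\varphi$, the two error integrals $\int_a^b\varphi(t)\int_{\{f=t\}\setminus D(R)}|\nabla f|\,dt$ and $\int_a^b\varphi(t)\int_{(l(t)\cap\overline{M_t})\setminus D(R)}|\nabla u|\,dt$. For the first, the co-area formula gives $\int_a^b\int_{\{f=t\}\setminus D(R)}|\nabla f|\,dt=\int_{(M\setminus D(R))\cap\{a<f<b\}}|\nabla f|^2$, and by the exponential decay estimate \eqref{c14} together with the gradient bound \eqref{CY} (so that $|\nabla f|\le Cf$), this is $\le C\int_{(M\setminus D(R))\cap\{f>\varepsilon\}}f^2\le Ce^{-R/C}$. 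For the second, the bound $|\nabla u|\le C(u+1/R)\le C(1+1/R)u/\varepsilon$ on $\{u>\varepsilon\}$ gives $\int_a^b\int_{(l(t)\cap\overline{M_t})\setminus D(R)}|\nabla u|\,dt\le (C/\varepsilon)\int_{(M\setminus D(R))\cap L(\varepsilon,1)}|\nabla u|\le (C/\varepsilon)\big(\int_{(M\setminus D(R))\cap L(\varepsilon,1)}|\nabla u|^2\big)^{1/2}\mathrm{Vol}^{1/2}$, and using $|\nabla u|^2\le C(u^2+1/R^2)$ together with \eqref{c16} this is $\le (C/\varepsilon^2)e^{-R/C}$. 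Combining all of these, $|\int_{L(a,b)}|\nabla u|^2\varphi(u)-C_0\int_a^b\varphi(t)\,dt|\le (C/\varepsilon^2)\sup_{(a,b)}|\varphi|\cdot(e^{-R/C}+1/R)\le (C/(\sqrt{R}\varepsilon^2))\sup|\varphi|$, as claimed.

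The step I expect to be the main obstacle is the careful bookkeeping in splitting $\{u=t\}\cap L(\varepsilon,1)$ into $l(t)\cap\overline{M_t}$, the $\widetilde{l}(t)$ pieces, and the part of $l(t)$ inside $D(R)$, and verifying that the first of these really does contribute exactly the $D(R)$-portion of $\{f=t\}$ with the correct orientation — this is where Lemma \ref{lb} and the fact that $u=f$ on $D(R)$ must be used with some care, especially since bounded components of $M\setminus D$ are all swept into $L(t,\infty)$ and must not be double-counted. The remaining estimates are routine applications of \eqref{c14}, \eqref{c16}, \eqref{CY}, and Cauchy–Schwarz, none of which presents a genuine difficulty once the decomposition is correctly set up.
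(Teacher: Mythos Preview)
Your approach differs substantially from the paper's, and the bookkeeping you flag as the obstacle is indeed not correctly carried out as written.

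The paper's argument is much simpler and avoids the $D(R)$ versus $M\setminus D(R)$ splitting entirely. It applies the divergence theorem directly to $u$ on the region $L(t,1)$, whose boundary consists of $\partial D$ (outward flux $C_0$ by \eqref{c6}) and $l(t)$. Since $|\Delta u|\le C/R$ on $L(t,1)$ and $\mathrm{Vol}(L(t,1))\le C/\varepsilon^{2}$ by \eqref{c15}, one obtains
\[
\Bigl|\int_{l(t)}|\nabla u|-C_{0}\Bigr|\le \frac{C}{R\varepsilon^{2}}
\]
for every regular $t>\varepsilon$. The co-area formula then splits $\int_{L(a,b)}|\nabla u|^{2}\varphi(u)$ into the $l(t)$ piece (main term, now estimated) and the $\widetilde l(t)$ piece, which Lemma \ref{u} and Lemma \ref{area} bound by $C\sup|\varphi|/\sqrt{R\varepsilon^{3}}$. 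No comparison with $\{f=t\}$, no reference to $M_{t}$, and no splitting of $l(t)$ according to $D(R)$ is needed.

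In your route, two concrete issues arise. First, your decomposition of $\{u=t\}\cap L(\varepsilon,1)$ into $l(t)\cap\overline{M_{t}}$ and $\widetilde l(t)$ omits the piece $l(t)\cap(M\setminus\overline{M_{t}})$. Second, the claimed identity
\[
\int_{l(t)\cap\overline{M_{t}}}|\nabla u|=\int_{\{f=t\}\cap D(R)}|\nabla f|+\bigl(\text{term on }M\setminus D(R)\bigr)
\]
is not correct as stated: while Lemma \ref{lb} gives $l(t)\cap D(R)\subset\overline{M_{t}}$, the reverse inclusion $\{f=t\}\cap D(R)\subset l(t)$ can fail, because a component of $\{u>t\}$ other than $L(t,\infty)$ may straddle $\partial D(R)$ and so contribute to $\{u=t\}\cap D(R)=\{f=t\}\cap D(R)$ through $\widetilde l(t)\cap D(R)$. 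Hence the discrepancy is not supported only on $M\setminus D(R)$. Both gaps are repairable---the extra pieces are controlled by Lemma \ref{area} and \eqref{c16} after integrating in $t$---but the one-line divergence-theorem argument on $L(t,1)$ sidesteps them entirely and is what the paper actually does.
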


\begin{proof}
For $t<1$ we have by (\ref{c6}) that 
\begin{eqnarray*}
\int_{L\left( t,1\right) }\Delta u &=&-\int_{\partial D}\frac{\partial u}{%
\partial \nu }-\int_{l\left( t\right) }\left\vert \nabla u\right\vert \\
&=&C_{0}-\int_{l\left( t\right) }\left\vert \nabla u\right\vert .
\end{eqnarray*}%
However, by (\ref{c15}) 
\begin{equation*}
\left\vert \int_{L\left( t,1\right) }\Delta u\right\vert \leq \frac{C}{R}%
\mathrm{Vol}\left( L\left( \varepsilon ,1\right) \right) \leq \frac{C}{%
R\varepsilon ^{2}}.
\end{equation*}%
So we conclude 
\begin{equation}
\left\vert \int_{l\left( t\right) }\left\vert \nabla u\right\vert
-C_{0}\right\vert \leq \frac{C}{R\varepsilon ^{2}}\text{ \ for all }%
t>\varepsilon .  \label{c17}
\end{equation}%
According to the co-area formula,%
\begin{eqnarray*}
\int_{L\left( a,b\right) }\left\vert \nabla u\right\vert ^{2}\varphi \left(
u\right) &=&\int_{a}^{b}\varphi \left( t\right) \left( \int_{l\left(
t\right) }\left\vert \nabla u\right\vert \right) dt \\
&&+\int_{a}^{b}\varphi \left( t\right) \left( \int_{\widetilde{l}\left(
t\right) \cap L\left( a,b\right) }\left\vert \nabla u\right\vert \right) dt.
\end{eqnarray*}%
Finally, Lemma \ref{u} and Lemma \ref{area} imply that 
\begin{equation*}
\int_{a}^{b}\left\vert \varphi \left( t\right) \right\vert \left( \int_{%
\widetilde{l}\left( t\right) \cap L\left( a,b\right) }\left\vert \nabla
u\right\vert \right) dt\leq \frac{C}{\sqrt{R\varepsilon ^{3}}}\sup_{\left(
a,b\right) }\left\vert \varphi \right\vert .
\end{equation*}%
The desired result then follows immediately.
\end{proof}

From now on, we restrict to the case that $M$ is of dimension $3.$ First, we
have the following corollary of Lemma \ref{ls} and Lemma \ref{l}.

\begin{corollary}
\label{Int}Let $\left( M,g\right) $ be a three-dimensional complete
Riemannian manifold. Then on any regular level $l\left( t\right) $ as
defined in (\ref{c8}), 
\begin{equation*}
\int_{l\left( t\right) \cap \overline{M_{t}}}S_{t}\leq C
\end{equation*}%
for a constant $C$ independent of $t\in \left( 0,1\right),$ where $S_{t}$
denotes the scalar curvature of $l\left( t\right),$ provided that either (A)
or (B) holds.\newline
(A) $H_{2}\left( M,\mathbb{Z}\right) $ contains no spherical classes.\newline
(B) $M$ has finitely many ends and finite first Betti number $b_{1}(M).$%
\newline
\end{corollary}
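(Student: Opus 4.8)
The plan is to run the Gauss--Bonnet theorem on the compact components of $l(t)\cap\overline{M_t}$ and then use the topological hypotheses only to keep under control those components which contribute positively. First I would observe that for a regular value $t\in(0,1)$ the level $l(t)=\partial L(t,\infty)$ is a smooth \emph{closed} surface (compact and boundaryless, since $\overline{L(t,\infty)}$ is compact), that $L(t,\infty)$ is a smooth domain, and that each connected component $\Sigma$ of $l(t)$ is adjacent, along its exterior collar, to exactly one connected component of $M\setminus\overline{L(t,\infty)}$. Collecting those components of $l(t)$ whose exterior collar lies in $M_t$ gives precisely $\partial M_t=l(t)\cap\overline{M_t}$, which is therefore a finite-or-countable disjoint union of closed surfaces $\Sigma_1,\Sigma_2,\dots$. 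Since on a surface the scalar curvature is twice the Gauss curvature, Gauss--Bonnet gives $\int_{\Sigma_i}S_t=2\int_{\Sigma_i}K_{\Sigma_i}=4\pi\,\chi(\Sigma_i)$ for each $i$, hence
\[
\int_{l(t)\cap\overline{M_t}}S_t=4\pi\sum_i\chi(\Sigma_i).
\]
Because $\chi(\Sigma_i)\le 2$ for every closed surface, with $\chi(\Sigma_i)=2$ only for $\mathbb{S}^2$ and $\chi(\Sigma_i)=1$ only for $\mathbb{RP}^2$, the whole problem is reduced to bounding, uniformly in $t$, the contribution of the sphere and projective-plane components.

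Under hypothesis (B), Lemma \ref{l} tells us that $l(t)\cap\overline{M_t}=\partial M_t$ has exactly $m$ components for every $0<t<1$; since each contributes at most $8\pi$, the sum is at most $8\pi m$, a constant independent of $t$. Under hypothesis (A), Lemma \ref{ls} (applied with $n=3$, so $H_{n-1}=H_2$) gives a dichotomy: either $l(t)\cap\overline{M_t}$ is connected, in which case the sum is $4\pi\chi(\partial M_t)\le 8\pi$, or no component of $l(t)\cap\overline{M_t}$ is homeomorphic to $\mathbb{S}^2$. In the latter case every component has $\chi(\Sigma_i)\le 1$, with equality only for $\mathbb{RP}^2$; but a regular level set is two-sided, and a two-sided closed surface embedded in an orientable $3$-manifold is orientable, so no $\mathbb{RP}^2$ occurs once $M$ is orientable, leaving $\chi(\Sigma_i)\le 0$ for every component and a sum $\le 0$. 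The non-orientable case reduces to the orientable one by passing to the orientation double cover, which is again complete with the same scalar- and Ricci-curvature lower bounds, satisfies $\lambda_1=\lambda_1(M)$ by Brooks \cite{B} (its deck group $\mathbb{Z}_2$ being amenable), and inherits the absence of spherical classes in $H_2$. In all cases $\int_{l(t)\cap\overline{M_t}}S_t\le C$ with $C$ independent of $t\in(0,1)$.

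The routine parts are the collar argument identifying $l(t)\cap\overline{M_t}$ with a union of entire components of the smooth closed surface $l(t)$, and the Gauss--Bonnet bookkeeping. The one point that genuinely requires care is hypothesis (A): there Lemma \ref{ls} gives no a priori bound on the \emph{number} of components of $\partial M_t$, so it is essential that every non-spherical component have nonpositive Euler characteristic; this is exactly where two-sided $\mathbb{RP}^2$ components must be excluded, and the orientation-double-cover reduction is the clean way to do so. I expect this to be the main (though still mild) obstacle; under (B) the argument is immediate once the number of components is pinned down by Lemma \ref{l}.
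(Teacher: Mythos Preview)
Your argument is essentially the paper's: apply Gauss--Bonnet componentwise and invoke Lemma \ref{l} under (B) and Lemma \ref{ls} under (A). The paper's proof is two lines: under (A) it simply asserts $\int_{l(t)\cap\overline{M_t}}S_t\le 8\pi$ from the dichotomy of Lemma \ref{ls}, and under (B) it gets $\le 8\pi m$ from Lemma \ref{l}.

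You go further than the paper under (A) by worrying about $\mathbb{RP}^2$ components in the ``no spheres'' branch of the dichotomy, which the paper does not address explicitly. Your observation that a regular level set bounds a domain and is therefore two-sided, hence orientable when $M$ is orientable, cleanly disposes of this case. The double-cover reduction you propose for non-orientable $M$, however, is really an argument about Theorem \ref{A1} rather than about the corollary as stated (it changes the manifold, the function $u$, and the level sets), and your claim that the orientation double cover again has no spherical classes in $H_2$ is not automatic and would itself need proof. For the corollary proper it is cleaner to stay on $M$: either note that the ambient theorems lose nothing by assuming orientability, or observe that the paper's bound $8\pi$ is all that is needed and the $\mathbb{RP}^2$ issue is a minor lacuna the authors leave implicit.
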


\begin{proof}
According to the Gauss-Bonnet theorem, on each regular connected component $%
l_{k}\left( t\right) $ of $l\left( t\right) ,$%
\begin{equation*}
\int_{l_{k}\left( t\right) }S_{t}=4\pi \chi \left( l_{k}\left( t\right)
\right) .
\end{equation*}%
If $H_{2}\left( M,\mathbb{Z}\right) $ contains no spherical classes, then
Lemma \ref{ls} implies that 
\begin{equation*}
\int_{l\left( t\right) \cap \overline{M_{t}}}S_{t}\leq 8\pi .
\end{equation*}%
If $M$ has $m$ ends and finite first Betti number, then Lemma \ref{l} says
that 
\begin{equation*}
\int_{l\left( t\right) \cap \overline{M_{t}}}S_{t}\leq 8\pi m\leq C.
\end{equation*}%
In either case, the constant is independent of $t\in \left( 0,1\right) .$
\end{proof}

The following result relies on an idea from \cite{SY} and follows as in \cite%
{J} or Lemma 4.1 in \cite{BKKS}. For the sake of completeness, we include
details here.

\begin{lemma}
\label{RicS}Let $\left( M,g\right) $ be a three-dimensional complete
Riemannian manifold with scalar curvature $S$ and $u$ a smooth function on $%
M.$ Then on each regular level set $\{u=t\}$ of $u,$%
\begin{eqnarray*}
\mathrm{Ric}\left( \nabla u,\nabla u\right) \left\vert \nabla u\right\vert
^{-2} &=&\frac{1}{2}S-\frac{1}{2}S_{t}+\frac{1}{2}\frac{1}{\left\vert \nabla
u\right\vert ^{2}}\left( \left\vert \nabla \left\vert \nabla u\right\vert
\right\vert ^{2}-\left\vert \nabla ^{2}u\right\vert ^{2}\right) \\
&&+\frac{1}{2}\frac{1}{\left\vert \nabla u\right\vert ^{2}}\left( \left(
\Delta u\right) ^{2}-2\frac{\left\langle \nabla \left\vert \nabla
u\right\vert ,\nabla u\right\rangle }{\left\vert \nabla u\right\vert }\Delta
u+\left\vert \nabla \left\vert \nabla u\right\vert \right\vert ^{2}\right) ,
\end{eqnarray*}%
where $S_{t}$ denotes the scalar curvature of the surface $\{u=t\}.$
\end{lemma}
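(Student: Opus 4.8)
The plan is to read off the identity from the contracted Gauss equation for the level surface $\Sigma=\{u=t\}$, together with the standard formulas expressing the second fundamental form and mean curvature of a level set through its defining function. Since $\Sigma$ is a \emph{regular} level set, $|\nabla u|>0$ along $\Sigma$, so the unit normal $\nu=\nabla u/|\nabla u|$ is well defined, and one may compute in a local orthonormal frame $e_{1},e_{2},e_{3}$ with $e_{3}=\nu$ and $e_{1},e_{2}$ tangent to $\Sigma$. Throughout, write $u_{ab}=\nabla^{2}u(e_{a},e_{b})$ and $u_{\nu\nu}=\nabla^{2}u(\nu,\nu)$, and let $A$, $H=\operatorname{tr}A$ denote the second fundamental form and mean curvature of $\Sigma$.

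First I would record the contracted Gauss equation. Tracing the Gauss identity $\langle R^{\Sigma}(X,Y)Y,X\rangle=\langle R^{M}(X,Y)Y,X\rangle+A(X,X)A(Y,Y)-A(X,Y)^{2}$ over the tangential frame, and using the two elementary facts $\sum_{i\neq j}\langle R^{M}(e_{i},e_{j})e_{j},e_{i}\rangle=S-2\,\mathrm{Ric}(\nu,\nu)$ and $\sum_{i\neq j}\bigl(A_{ii}A_{jj}-A_{ij}^{2}\bigr)=H^{2}-|A|^{2}$, one obtains $S_{t}=S-2\,\mathrm{Ric}(\nu,\nu)+H^{2}-|A|^{2}$, that is,
\[
\mathrm{Ric}(\nabla u,\nabla u)\,|\nabla u|^{-2}=\mathrm{Ric}(\nu,\nu)=\tfrac12\bigl(S-S_{t}+H^{2}-|A|^{2}\bigr),
\]
where in dimension three $S_{t}$ is twice the Gauss curvature of $\Sigma$, consistently with Corollary~\ref{Int}.

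Next I would rewrite $H^{2}-|A|^{2}$ in terms of $u$. For tangential indices, $A_{ij}=\langle\nabla_{e_{i}}\nu,e_{j}\rangle=|\nabla u|^{-1}u_{ij}$ since $\langle\nabla u,e_{j}\rangle=0$; hence $H=|\nabla u|^{-1}\bigl(\Delta u-u_{\nu\nu}\bigr)$ and $|A|^{2}=|\nabla u|^{-2}\sum_{i,j=1}^{2}u_{ij}^{2}$. Two auxiliary identities, both obtained by differentiating $|\nabla u|^{2}$ (so that $\nabla_{X}|\nabla u|=|\nabla u|^{-1}\nabla^{2}u(X,\nabla u)$), are needed: applying this with $X=e_{a}$ gives $u_{a\nu}=\langle\nabla|\nabla u|,e_{a}\rangle$ for every $a$, whence $u_{\nu\nu}=\langle\nabla|\nabla u|,\nu\rangle=|\nabla u|^{-1}\langle\nabla|\nabla u|,\nabla u\rangle$ and $\sum_{i=1}^{2}u_{i\nu}^{2}=|\nabla|\nabla u||^{2}-u_{\nu\nu}^{2}$. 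Expanding the full Hessian norm in the adapted frame, $|\nabla^{2}u|^{2}=\sum_{i,j=1}^{2}u_{ij}^{2}+2\sum_{i=1}^{2}u_{i\nu}^{2}+u_{\nu\nu}^{2}$, and substituting the previous line gives $\sum_{i,j=1}^{2}u_{ij}^{2}=|\nabla^{2}u|^{2}-2|\nabla|\nabla u||^{2}+u_{\nu\nu}^{2}$.

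Finally, I would assemble the pieces. From the last two displays, $H^{2}-|A|^{2}=|\nabla u|^{-2}\bigl((\Delta u)^{2}-2\,\Delta u\,u_{\nu\nu}+u_{\nu\nu}^{2}-|\nabla^{2}u|^{2}+2|\nabla|\nabla u||^{2}-u_{\nu\nu}^{2}\bigr)$; the $u_{\nu\nu}^{2}$ terms cancel, and splitting $2|\nabla|\nabla u||^{2}-|\nabla^{2}u|^{2}=\bigl(|\nabla|\nabla u||^{2}-|\nabla^{2}u|^{2}\bigr)+|\nabla|\nabla u||^{2}$ and inserting $u_{\nu\nu}=|\nabla u|^{-1}\langle\nabla|\nabla u|,\nabla u\rangle$, substitution into the Gauss identity above reproduces verbatim the asserted formula. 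I expect the only genuine obstacle to be bookkeeping: keeping the sign conventions in the Gauss equation consistent, and, above all, carefully separating the ambient quantities $|\nabla^{2}u|^{2}$, $|\nabla|\nabla u||^{2}$, $\Delta u$ from their tangential counterparts along $\Sigma$. Once the two differentiated-norm identities are in hand the remainder is purely algebraic, and beyond $\dim M=3$ (used only to call $\{u=t\}$ a surface and to identify $S_{t}$ with twice its Gauss curvature) no curvature, completeness, or other hypothesis enters.
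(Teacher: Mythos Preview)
Your proposal is correct and follows essentially the same route as the paper's proof: both set up an adapted orthonormal frame with $\nu=\nabla u/|\nabla u|$, invoke the contracted Gauss equation $S_t=S-2\,\mathrm{Ric}(\nu,\nu)+H^2-|A|^2$, express $H$ and $|A|^2$ via the Hessian of $u$, and use the identity $u_{a\nu}=\langle\nabla|\nabla u|,e_a\rangle$ to rewrite everything in terms of $|\nabla^2 u|^2$, $|\nabla|\nabla u||^2$, $\Delta u$, and $\langle\nabla|\nabla u|,\nabla u\rangle$. The paper's write-up is more compressed (it passes directly from $(\Delta u-u_{11})^2-|u_{ab}|^2$ to the final line), whereas you spell out the intermediate frame decomposition of $|\nabla^2 u|^2$ and the cancellation of the $u_{\nu\nu}^2$ terms, but the argument is the same.
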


\begin{proof}
On a regular level set $\{u=t\}$ of $u,$ its unit normal vector is given by%
\begin{equation*}
e_{1}=\frac{\nabla u}{\left\vert \nabla u\right\vert }.
\end{equation*}%
Choose unit vectors $\left\{ e_{2},e_{3}\right\} $ tangent to $\{u=t\}$ such
that $\left\{ e_{1},e_{2},e_{3}\right\} $ forms a local orthonormal frame on 
$M.$ The second fundamental form and the mean curvature of $\{u=t\}$ are
then given by%
\begin{equation*}
h_{ab}=\frac{u_{ab}}{\left\vert \nabla u\right\vert }\text{ \ and }H=\frac{%
\Delta u-u_{11}}{\left\vert \nabla u\right\vert },\text{ \ respectively, }
\end{equation*}%
where indices $a$ and $b$ range from $2$ to $3.$ By the Gauss curvature
equation, we have%
\begin{equation*}
S_{t}=S-2R_{11}+H^{2}-\left\vert h\right\vert ^{2}.
\end{equation*}%
Therefore,%
\begin{eqnarray*}
&&2\mathrm{Ric}\left( \nabla u,\nabla u\right) \left\vert \nabla
u\right\vert ^{-2} \\
&=&S-S_{t}+\frac{1}{\left\vert \nabla u\right\vert ^{2}}\left( \left( \Delta
u-u_{11}\right) ^{2}-\left\vert u_{ab}\right\vert ^{2}\right) \\
&=&S-S_{t}+\frac{1}{\left\vert \nabla u\right\vert ^{2}}\left( \left( \Delta
u\right) ^{2}-2u_{11}\left( \Delta u\right) +2\left\vert \nabla \left\vert
\nabla u\right\vert \right\vert ^{2}-\left\vert u_{ij}\right\vert
^{2}\right) .
\end{eqnarray*}%
This proves the result.
\end{proof}

Finally, we recall the following well known Kato inequality for harmonic
functions (cf. \cite{MW}). 
\begin{equation}
\left\vert \nabla ^{2}f\right\vert ^{2}\geq \frac{3}{2}\left\vert \nabla
\left\vert \nabla f\right\vert \right\vert ^{2}\text{ \ on }M\setminus D.
\label{Kato}
\end{equation}

We are now ready to prove Theorem \ref{A1} which is restated below.

\begin{theorem}
Let $\left( M,g\right) $ be a three-dimensional complete Riemannian manifold
with scalar curvature $S\geq -6K$ on $M$ for some nonnegative constant $K.$
Assume that the Ricci curvature of $M$ is bounded from below by a constant.
Then the bottom spectrum of $M$ satisfies%
\begin{equation*}
\lambda _{1}\left( M\right) \leq K
\end{equation*}
provided that either (A) or (B) holds.\newline
(A) $H_{2}\left( M,\mathbb{Z}\right) $ contains no spherical classes.\newline
(B) $M$ has finitely many ends and finite first Betti number.
\end{theorem}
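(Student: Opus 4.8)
The plan is to argue by contradiction: suppose $\lambda_1(M) > K$, hence in particular $\lambda_1(M) > 0$, so $M$ is nonparabolic and the barrier function $f$ from \eqref{barrier}, the cut-off $\psi$, and the modified function $u = f\psi$ from \eqref{c7} are all available, together with the exponential decay \eqref{c14} and the flux identity \eqref{c6}. Fix a small $\varepsilon > 0$ and a large $R > 0$. The test function in the Poincar\'e inequality will be $\varphi = \eta(u)\,|\nabla u|^{1/2}$, where $\eta$ is a suitable cut-off in the $u$-variable, supported in $(\varepsilon, 1)$, to be optimized at the end; $\varphi$ is supported in the compact set $\overline{L(\varepsilon,1)}$ by the discussion preceding Lemma \ref{lb}. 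The Poincar\'e inequality gives
\begin{equation*}
\lambda_1(M)\int_{L(\varepsilon,1)} \eta(u)^2 |\nabla u| \le \int_{L(\varepsilon,1)} \left| \nabla\bigl(\eta(u)|\nabla u|^{1/2}\bigr)\right|^2.
\end{equation*}
Expanding the right-hand side produces a term $\tfrac14\int \eta(u)^2 |\nabla u|^{-1}|\nabla|\nabla u||^2$ plus cross terms and a term with $(\eta')^2|\nabla u|^2$; the idea is to integrate the Bochner formula against $\eta(u)^2 |\nabla u|^{-1}$, substitute the Schoen--Yau-type identity of Lemma \ref{RicS} for the Ricci term, use the Kato inequality \eqref{Kato} to absorb the Hessian terms, and thereby convert $\int \eta^2|\nabla u|^{-1}|\nabla|\nabla u||^2$ into an expression involving $\int \eta^2 |\nabla u|$, the scalar curvature $S \ge -6K$, the intrinsic scalar curvature $S_t$ of the level sets, and error terms of order $R^{-1}$, $e^{-R/C}$, or $\varepsilon^{-2}R^{-1/2}$ coming from $\Delta u$, $\nabla(\Delta u)$, and Lemma \ref{area}.

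The key steps, in order, are: (i) run the Bochner identity $\Delta|\nabla u| = (|\nabla^2 u|^2 - |\nabla|\nabla u||^2)|\nabla u|^{-1} + \mathrm{Ric}(\nabla u,\nabla u)|\nabla u|^{-1} + \langle \nabla \Delta u, \nabla u\rangle |\nabla u|^{-1}$, multiply by $\eta(u)^2 |\nabla u|^{-1}$, and integrate by parts over $L(\varepsilon,1)$, picking up boundary terms on $\partial D$, on $l(\varepsilon)$, $l(1)$, and — crucially — no uncontrolled interior boundary because $u$ has compact support; (ii) replace $\mathrm{Ric}(\nabla u,\nabla u)|\nabla u|^{-2}$ by the right-hand side of Lemma \ref{RicS}, so that the bad Hessian contribution $-\tfrac12|\nabla u|^{-2}|\nabla^2 u|^2$ combines with the Bochner Hessian term and is killed by Kato \eqref{Kato}, leaving a favorable multiple of $|\nabla u|^{-2}|\nabla|\nabla u||^2$; (iii) apply the co-area formula and Lemma \ref{I} to turn volume integrals of $|\nabla u|^2\varphi(u)$ into $C_0\int \varphi(t)\,dt$ up to $\varepsilon^{-2}R^{-1/2}$ errors, and apply Corollary \ref{Int} plus the co-area formula to bound $\int \eta(u)^2 |\nabla u|^{-1} S_t$ — the level-set scalar curvature integrates to something uniformly bounded over $l(t)\cap \overline{M_t}$, while the contribution from $\widetilde l(t)$ is controlled by Lemma \ref{area} (here one also discards the favorable $|\nabla u|^{-1}|\nabla|\nabla u||^2$ term or keeps it with the right sign); (iv) assemble everything into an inequality of the schematic form $(\lambda_1(M) - K)\,C_0 \int_\varepsilon^1 \eta(t)^2\,dt \le C_0\int_\varepsilon^1 (\eta'(t))^2\,t^2\,dt + (\text{errors in } R, \varepsilon)$, where after a change of variable $s = \ln(1/t)$ and the choice $\eta(t) = t^{s}$-type profiles the ratio $\int(\eta')^2 t^2 / \int \eta^2$ can be made as close to $\tfrac14$ as one wishes on a long interval, so $\lambda_1(M) - K \le \tfrac14\cdot(\text{something}\to 0)$; (v) finally let $R\to\infty$ first (killing all $R$-errors by Lemma \ref{I} and \eqref{c16}), then $\varepsilon\to 0$ and shrink the $\eta$-profile, to force $\lambda_1(M) \le K$, contradicting the assumption.

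The main obstacle I expect is step (ii)--(iii) bookkeeping: because $u$ is \emph{not} harmonic, the Bochner formula carries the extra terms $\langle\nabla\Delta u,\nabla u\rangle|\nabla u|^{-1}$ and, after the integration by parts, boundary integrals on $\partial D$ and on the level sets near $\rho = R$, and one must check these are all $O(R^{-1})$ or exponentially small using Lemma \ref{u}, \eqref{c4}, \eqref{c14}, and the fact that the problematic $|\nabla^2\rho|^2$ and $|\nabla^2 f|^2$ factors in Lemma \ref{u} only ever appear multiplied by $R^{-1}$ against a region $(M\setminus D(R))\cap L(\varepsilon,1)$ of exponentially small volume. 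A second delicate point is the Schoen--Yau term: Lemma \ref{RicS} leaves a residual $\tfrac12|\nabla u|^{-2}\bigl((\Delta u)^2 - 2\langle\nabla|\nabla u|,\nabla u\rangle|\nabla u|^{-1}\Delta u + |\nabla|\nabla u||^2\bigr)$, whose first two pieces are $O(R^{-1})$-type errors but whose last piece $\tfrac12|\nabla u|^{-2}|\nabla|\nabla u||^2$ has the \emph{wrong} sign relative to what Kato gives back; one must verify that the $\tfrac32$ in \eqref{Kato} is strong enough that, after combining $-\tfrac12|\nabla^2 u|^2$ (from Lemma \ref{RicS}), the Bochner term $|\nabla^2 u|^2$, and this residual, the net coefficient of $|\nabla u|^{-2}|\nabla|\nabla u||^2$ still allows the $\tfrac14$ Poincar\'e constant to emerge — this is exactly the three-dimensional arithmetic that makes the bound $K$ (rather than something worse) come out, and where the hypothesis $n = 3$ is genuinely used. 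Modulo these estimates, the choice of $\eta$ to realize the sharp constant is routine one-variable calculus of variations.
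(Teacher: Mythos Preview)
Your strategy identifies all the correct ingredients (Bochner formula, Lemma~\ref{RicS}, the Kato inequality~\eqref{Kato}, Gauss--Bonnet via Corollary~\ref{Int}, and the error control from Lemmas~\ref{lb}--\ref{I}), and your anticipation of the difficulties caused by $\Delta u\neq 0$ is accurate. However, there is a genuine gap at step~(iv): the test function $\eta(u)\,|\nabla u|^{1/2}$, combined with Bochner, Lemma~\ref{RicS}, and Kato as you describe, does \emph{not} produce the sharp coefficient $K$. Expanding $|\nabla(\eta(u)|\nabla u|^{1/2})|^2$, eliminating the cross term via the integrated Bochner identity, and substituting Lemma~\ref{RicS} for the Ricci term yields (on the harmonic region, modulo the errors you correctly flag)
\[
\lambda_1\int\eta^2|\nabla u|\;\le\;\int(\eta')^2|\nabla u|^3\;-\;\tfrac{1}{8}\int\eta^2|\nabla u|^{-1}\bigl|\nabla|\nabla u|\bigr|^2\;+\;\tfrac{3K}{2}\int\eta^2|\nabla u|\;+\;C,
\]
because the scalar-curvature contribution enters as $-\tfrac{1}{4}S\le\tfrac{3K}{2}$. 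Your schematic $(\lambda_1-K)\int\eta^2\le\int(\eta')^2 t^2$ is therefore not what emerges, and in any case the Rayleigh quotient you describe converges to $\tfrac{1}{4}$, not to $0$, so the optimization in step~(v) cannot close the argument.

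The paper recovers the sharp constant by a different organization. It proves two separate estimates. \textbf{Claim~1}: from Bochner, Lemma~\ref{RicS}, Corollary~\ref{Int}, and Kato one first obtains $\int|\nabla u|^3 u^{-2}\phi^2\le 4K\int|\nabla u|\,\phi^2+C$, and then the pointwise Young inequality $4K|\nabla u|\le\tfrac{1}{3}|\nabla u|^3 u^{-2}+\tfrac{16}{3}K^{3/2}u$ upgrades this to $\int|\nabla u|^3 u^{-2}\phi^2\le 8K^{3/2}\int u\,\phi^2+C$. \textbf{Claim~2}: the Poincar\'e inequality is applied with test function $u^{1/2}\phi$ (not $|\nabla u|^{1/2}$), giving $\lambda_1\int u\,\phi^2\le\tfrac{1}{4}\int|\nabla u|^2 u^{-1}\phi^2+C$. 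H\"older's inequality then bridges the two claims via $\int|\nabla u|^2 u^{-1}\phi^2\le\bigl(\int|\nabla u|^3 u^{-2}\phi^2\bigr)^{2/3}\bigl(\int u\,\phi^2\bigr)^{1/3}$, and it is precisely the arithmetic $\tfrac{1}{4}(8K^{3/2})^{2/3}=K$ that delivers the sharp bound. Assuming $\lambda_1>K$ then forces $\int_{M\setminus D} u\,\phi^2\le C$ uniformly in $\varepsilon$ and $R$; the contradiction comes not from a cut-off optimization but from the flux identity~\eqref{c6}, which together with~\eqref{CY} gives $\int_{B_p(r)\setminus D}f\ge C\,C_0(r-R_0)\to\infty$.
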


\begin{proof}
For given small $0<\varepsilon <1,$ let $R>R_{0}$ and%
\begin{equation}
R>\frac{1}{\varepsilon ^{6}},  \label{c13}
\end{equation}%
where $R_{0}$ is chosen to be large enough so that $D\subset D(R_{0})$. In
the following, we use $C$ to denote a positive constant that is independent
of $R$ and $\varepsilon ,$ while its value may change from line to line. To
prove the theorem, we may assume that $\lambda _{1}\left( M\right) >0.$ In
particular, $\left( M,g\right) $ is nonparabolic.

Let $f$ be the Li-Tam barrier function defined by (\ref{barrier}) and set $%
u=f\psi $ as in (\ref{c7}). Let 
\begin{equation*}
\phi (x)=\left\{ 
\begin{array}{c}
\phi \left( u\left( x\right) \right) \\ 
0%
\end{array}%
\right. 
\begin{array}{c}
\text{on }L\left( \varepsilon ,\infty \right) \\ 
\text{on }M\setminus L\left( \varepsilon ,\infty \right) .%
\end{array}%
\end{equation*}%
Here the function $\phi \left( t\right) $ is smooth, $\phi \left( t\right)
=1 $ for $2\varepsilon \leq t\leq 1$ and $\phi \left( t\right) =0$ for $%
t<\varepsilon .$ Moreover,%
\begin{equation}
\left\vert \phi ^{\prime }\right\vert \left( t\right) \leq \frac{C}{%
\varepsilon }\text{ \ and }\left\vert \phi ^{\prime \prime }\right\vert
\left( t\right) \leq \frac{C}{\varepsilon ^{2}}\text{ \ for }\varepsilon
<t<2\varepsilon .  \label{phi}
\end{equation}

Clearly, by definition, the function $\phi \left( x\right) $ satisfies $\phi
=1$ on $L\left( 2\varepsilon ,\infty \right) $ and $\phi =0$ on $M\setminus
L\left( \varepsilon ,\infty \right) .$

We first prove the following inequality. 
\begin{equation*}
\text{\textbf{Claim 1: \ \ }}\int_{M\setminus D}\left\vert \nabla
u\right\vert ^{3}u^{-2}\phi ^{2}\leq 8K^{\frac{3}{2}}\int_{M\setminus
D}u\phi ^{2}+C.
\end{equation*}

Applying the Bochner formula and the inequality $\left\langle \nabla \Delta
u,\nabla u\right\rangle \geq -\left\vert \nabla \left( \Delta u\right)
\right\vert \left\vert \nabla u\right\vert $, we have%
\begin{equation*}
\Delta \left\vert \nabla u\right\vert \geq \left( \left\vert
u_{ij}\right\vert ^{2}-\left\vert \nabla \left\vert \nabla u\right\vert
\right\vert ^{2}\right) \left\vert \nabla u\right\vert ^{-1}+\mathrm{Ric}%
\left( \nabla u,\nabla u\right) \left\vert \nabla u\right\vert
^{-1}-\left\vert \nabla \left( \Delta u\right) \right\vert
\end{equation*}%
on $M\setminus D$ and whenever $\left\vert \nabla u\right\vert \neq 0.$
Hence, it follows that%
\begin{gather*}
\int_{M\setminus D}\left( \Delta \left\vert \nabla u\right\vert \right) \phi
^{2}\geq \int_{M\setminus D}\left( \left\vert u_{ij}\right\vert
^{2}-\left\vert \nabla \left\vert \nabla u\right\vert \right\vert
^{2}\right) \left\vert \nabla u\right\vert ^{-1}\phi ^{2} \\
+\int_{M\setminus D}\mathrm{Ric}\left( \nabla u,\nabla u\right) \left\vert
\nabla u\right\vert ^{-1}\phi ^{2}-\int_{M\setminus D}\left\vert \nabla
\left( \Delta u\right) \right\vert \phi ^{2}.
\end{gather*}%
According to Lemma \ref{u},%
\begin{equation}
\int_{M\setminus D}\left\vert \nabla \left( \Delta u\right) \right\vert \phi
^{2}\leq \frac{C}{R}\int_{M\setminus D}\left( \left\vert \nabla ^{2}\rho
\right\vert ^{2}+\left\vert \nabla ^{2}f\right\vert ^{2}+1\right) \phi ^{2}.
\label{c20}
\end{equation}%
Integrating by parts gives%
\begin{eqnarray*}
\int_{M\setminus D}\left\vert \nabla ^{2}\rho \right\vert ^{2}\phi ^{2}
&=&-\int_{M\setminus D}\left\langle \nabla \left( \Delta \rho \right)
,\nabla \rho \right\rangle \phi ^{2}-\int_{M\setminus D}\mathrm{Ric}\left(
\nabla \rho ,\nabla \rho \right) \phi ^{2} \\
&&-2\int_{M\setminus D}\rho _{ij}\rho _{i}\phi _{j}\phi -\int_{\partial
D}\rho _{ij}\rho _{i}\nu _{j}\phi ^{2}.
\end{eqnarray*}%
By Lemma \ref{r} and (\ref{c15}) \ we get 
\begin{eqnarray*}
-\int_{M\setminus D}\left\langle \nabla \left( \Delta \rho \right) ,\nabla
\rho \right\rangle \phi ^{2} &\leq &C\int_{M\setminus D}\left\vert \nabla
^{2}\rho \right\vert \phi ^{2} \\
&\leq &\frac{1}{4}\int_{M\setminus D}\left\vert \nabla ^{2}\rho \right\vert
^{2}\phi ^{2}+C\,\int_{M\setminus D}\phi ^{2} \\
&\leq &\frac{1}{4}\int_{M\setminus D}\left\vert \nabla ^{2}\rho \right\vert
^{2}\phi ^{2}+\frac{C}{\varepsilon ^{2}}.
\end{eqnarray*}%
Since Ricci curvature is bounded from below by a constant, we similarly have%
\begin{equation*}
-\int_{M\setminus D}\mathrm{Ric}\left( \nabla \rho ,\nabla \rho \right) \phi
^{2}\leq C\int_{M\setminus D}\left\vert \nabla \rho \right\vert ^{2}\phi
^{2}\leq \frac{C}{\varepsilon ^{2}}.
\end{equation*}%
Finally, we have 
\begin{eqnarray*}
-2\int_{M\setminus D}\rho _{ij}\rho _{i}\phi _{j}\phi &\leq &\frac{1}{4}%
\int_{M\setminus D}\left\vert \nabla ^{2}\rho \right\vert ^{2}\phi
^{2}+4\int_{L\left( \varepsilon ,1\right) }\left\vert \nabla \phi
\right\vert ^{2} \\
&\leq &\frac{1}{4}\int_{M\setminus D}\left\vert \nabla ^{2}\rho \right\vert
^{2}\phi ^{2}+\frac{C}{\varepsilon },
\end{eqnarray*}%
where in the last line we have used (\ref{phi}) and Lemma \ref{I}. In
conclusion, this proves that 
\begin{equation*}
\int_{M\setminus D}\left\vert \nabla ^{2}\rho \right\vert ^{2}\phi ^{2}\leq 
\frac{C}{\varepsilon ^{2}}.
\end{equation*}

Similarly, we see that%
\begin{equation*}
\int_{M\setminus D}\left\vert \nabla ^{2}f\right\vert ^{2}\phi ^{2}\leq C.
\end{equation*}%
We conclude from above and (\ref{c20}) that%
\begin{equation}
\int_{M\setminus D}\left\vert \nabla \left( \Delta u\right) \right\vert \phi
^{2}\leq \frac{C}{R\varepsilon ^{2}}.  \label{c21}
\end{equation}%
Thus, we have proved that%
\begin{eqnarray}
&&\int_{M\setminus D}\left( \Delta \left\vert \nabla u\right\vert \right)
\phi ^{2}  \label{c22} \\
&\geq &\int_{M\setminus D}\left( \left\vert u_{ij}\right\vert
^{2}-\left\vert \nabla \left\vert \nabla u\right\vert \right\vert ^{2}+%
\mathrm{Ric}\left( \nabla u,\nabla u\right) \right) \left\vert \nabla
u\right\vert ^{-1}\phi ^{2}-C\varepsilon .  \notag
\end{eqnarray}

Using the co-area formula and noting that $\phi =0$ outside $L(\varepsilon
,\infty ),$ we have%
\begin{eqnarray}
&&\int_{M\setminus D}\left( \left\vert u_{ij}\right\vert ^{2}-\left\vert
\nabla \left\vert \nabla u\right\vert \right\vert ^{2}+\mathrm{Ric}\left(
\nabla u,\nabla u\right) \right) \left\vert \nabla u\right\vert ^{-1}\phi
^{2}  \label{c23} \\
&=&\int_{\varepsilon }^{1}\phi ^{2}\left( t\right) \int_{\left\{ u=t\right\}
\cap L\left( \varepsilon ,\infty \right) }\left( \left\vert
u_{ij}\right\vert ^{2}-\left\vert \nabla \left\vert \nabla u\right\vert
\right\vert ^{2}+\mathrm{Ric}\left( \nabla u,\nabla u\right) \right)
\left\vert \nabla u\right\vert ^{-2}dt  \notag \\
&=&\int_{\varepsilon }^{1}\phi ^{2}\left( t\right) \int_{l\left( t\right)
}\left( \left\vert u_{ij}\right\vert ^{2}-\left\vert \nabla \left\vert
\nabla u\right\vert \right\vert ^{2}+\mathrm{Ric}\left( \nabla u,\nabla
u\right) \right) \left\vert \nabla u\right\vert ^{-2}dt  \notag \\
&&+\int_{\varepsilon }^{1}\phi ^{2}\left( t\right) \int_{\widetilde{l}\left(
t\right) }\left( \left\vert u_{ij}\right\vert ^{2}-\left\vert \nabla
\left\vert \nabla u\right\vert \right\vert ^{2}+\mathrm{Ric}\left( \nabla
u,\nabla u\right) \right) \left\vert \nabla u\right\vert ^{-2}dt,  \notag
\end{eqnarray}%
where $l(t)$ and $\widetilde{l}(t)$ are defined in (\ref{c8}) and (\ref{c10}%
), respectively. Since $\left\vert u_{ij}\right\vert ^{2}-\left\vert \nabla
\left\vert \nabla u\right\vert \right\vert ^{2}\geq 0$ on $\widetilde{l}%
\left( t\right) \setminus D\left( R\right) $ and $u$ is harmonic on $D\left(
R\right) $, by Lemma \ref{area} and (\ref{Kato}) the last term is estimated
as 
\begin{eqnarray*}
&&\int_{\varepsilon }^{1}\phi ^{2}\left( t\right) \int_{\widetilde{l}\left(
t\right) }\left( \left\vert u_{ij}\right\vert ^{2}-\left\vert \nabla
\left\vert \nabla u\right\vert \right\vert ^{2}+\mathrm{Ric}\left( \nabla
u,\nabla u\right) \right) \left\vert \nabla u\right\vert ^{-2}dt \\
&\geq &\frac{1}{2}\int_{\varepsilon }^{1}\phi ^{2}\left( t\right) \int_{%
\widetilde{l}\left( t\right) \cap D\left( R\right) }\left\vert \nabla
\left\vert \nabla u\right\vert \right\vert ^{2}\left\vert \nabla
u\right\vert ^{-2}dt-C\int_{\varepsilon }^{1}\mathrm{Area}\left( \widetilde{l%
}\left( t\right) \right) dt \\
&\geq &\frac{1}{2}\int_{\varepsilon }^{1}\phi ^{2}\left( t\right) \int_{%
\widetilde{l}\left( t\right) \cap D\left( R\right) }\left\vert \nabla
\left\vert \nabla u\right\vert \right\vert ^{2}\left\vert \nabla
u\right\vert ^{-2}dt-\frac{C}{\sqrt{R\varepsilon ^{3}}}.
\end{eqnarray*}%
Rewrite the second last term in (\ref{c23}) as 
\begin{eqnarray*}
&&\int_{\varepsilon }^{1}\phi ^{2}\left( t\right) \int_{l\left( t\right)
}\left( \left\vert u_{ij}\right\vert ^{2}-\left\vert \nabla \left\vert
\nabla u\right\vert \right\vert ^{2}+\mathrm{Ric}\left( \nabla u,\nabla
u\right) \right) \left\vert \nabla u\right\vert ^{-2}dt \\
&=&\int_{\varepsilon }^{1}\phi ^{2}\left( t\right) \int_{l\left( t\right)
\cap \overline{M_{t}}}\left( \left\vert u_{ij}\right\vert ^{2}-\left\vert
\nabla \left\vert \nabla u\right\vert \right\vert ^{2}+\mathrm{Ric}\left(
\nabla u,\nabla u\right) \right) \left\vert \nabla u\right\vert ^{-2}dt \\
&&+\int_{\varepsilon }^{1}\phi ^{2}\left( t\right) \int_{l\left( t\right)
\cap \left( M\setminus \overline{M_{t}}\right) }\left( \left\vert
u_{ij}\right\vert ^{2}-\left\vert \nabla \left\vert \nabla u\right\vert
\right\vert ^{2}+\mathrm{Ric}\left( \nabla u,\nabla u\right) \right)
\left\vert \nabla u\right\vert ^{-2}dt.
\end{eqnarray*}

Applying the inequality $\left\vert u_{ij}\right\vert ^{2}-\left\vert \nabla
\left\vert \nabla u\right\vert \right\vert ^{2}\geq 0,$ together with the
fact that the Ricci curvature is bounded from below, we conclude 
\begin{eqnarray*}
&&\int_{\varepsilon }^{1}\phi ^{2}\left( t\right) \int_{l\left( t\right)
\cap \left( M\setminus \overline{M_{t}}\right) }\left( \left\vert
u_{ij}\right\vert ^{2}-\left\vert \nabla \left\vert \nabla u\right\vert
\right\vert ^{2}+\mathrm{Ric}\left( \nabla u,\nabla u\right) \right)
\left\vert \nabla u\right\vert ^{-2}dt \\
&\geq &-C\int_{\varepsilon }^{1}\phi ^{2}\left( t\right) \int_{l\left(
t\right) \cap \left( M\setminus D\left( R\right) \right) }dt \\
&\geq &-C\int_{M\setminus D\left( R\right) }\left\vert \nabla u\right\vert
\phi ^{2} \\
&\geq &-C\varepsilon ,
\end{eqnarray*}%
where in the second line we have used Lemma \ref{lb} and in the last line (%
\ref{c16}). Hence, (\ref{c23}) becomes 
\begin{eqnarray}
&&\int_{M\setminus D}\left( \left\vert u_{ij}\right\vert ^{2}-\left\vert
\nabla \left\vert \nabla u\right\vert \right\vert ^{2}+\mathrm{Ric}\left(
\nabla u,\nabla u\right) \right) \left\vert \nabla u\right\vert ^{-1}\phi
^{2}  \label{c24} \\
&\geq &\int_{\varepsilon }^{1}\phi ^{2}\left( t\right) \int_{l\left(
t\right) \cap \overline{M_{t}}}\left( \left\vert u_{ij}\right\vert
^{2}-\left\vert \nabla \left\vert \nabla u\right\vert \right\vert ^{2}+%
\mathrm{Ric}\left( \nabla u,\nabla u\right) \right) \left\vert \nabla
u\right\vert ^{-2}dt  \notag \\
&&+\frac{1}{2}\int_{\varepsilon }^{1}\phi ^{2}\left( t\right) \int_{%
\widetilde{l}\left( t\right) \cap D\left( R\right) }\left\vert \nabla
\left\vert \nabla u\right\vert \right\vert ^{2}\left\vert \nabla
u\right\vert ^{-2}dt-C\varepsilon .  \notag
\end{eqnarray}%
On any regular level set $l\left( t\right) $, from Lemma \ref{RicS},
Corollary \ref{Int}, and the assumption $S\geq -6K$, we have 
\begin{eqnarray*}
&&\int_{l\left( t\right) \cap \overline{M_{t}}}\left( \left\vert
u_{ij}\right\vert ^{2}-\left\vert \nabla \left\vert \nabla u\right\vert
\right\vert ^{2}+\mathrm{Ric}\left( \nabla u,\nabla u\right) \right)
\left\vert \nabla u\right\vert ^{-2} \\
&\geq &\frac{1}{2}\int_{l\left( t\right) \cap \overline{M_{t}}}\left(
\left\vert u_{ij}\right\vert ^{2}-\left\vert \nabla \left\vert \nabla
u\right\vert \right\vert ^{2}\right) \left\vert \nabla u\right\vert ^{-2} \\
&&+\frac{1}{2}\int_{l\left( t\right) \cap \overline{M_{t}}}\left( \left(
\Delta u\right) ^{2}-2\frac{\left\langle \nabla \left\vert \nabla
u\right\vert ,\nabla u\right\rangle }{\left\vert \nabla u\right\vert }\Delta
u+\left\vert \nabla \left\vert \nabla u\right\vert \right\vert ^{2}\right)
\left\vert \nabla u\right\vert ^{-2} \\
&&-3K\mathrm{Area}\left( l\left( t\right) \cap \overline{M_{t}}\right) -C.
\end{eqnarray*}%
We note that 
\begin{equation*}
\int_{\varepsilon }^{1}\phi ^{2}\left( t\right) \mathrm{Area}\left( l\left(
t\right) \cap \overline{M_{t}}\right) dt\leq \int_{\varepsilon }^{1}\phi
^{2}\left( t\right) \mathrm{Area}\left( l\left( t\right) \right) dt\leq
\int_{M\setminus D}\left\vert \nabla u\right\vert \phi ^{2}.
\end{equation*}%
Accordingly, (\ref{c24}) becomes 
\begin{eqnarray}
&&\int_{M\setminus D}\left( \left\vert u_{ij}\right\vert ^{2}-\left\vert
\nabla \left\vert \nabla u\right\vert \right\vert ^{2}+\mathrm{Ric}\left(
\nabla u,\nabla u\right) \right) \left\vert \nabla u\right\vert ^{-1}\phi
^{2}  \label{c25} \\
&\geq &\frac{1}{2}\int_{\varepsilon }^{1}\phi ^{2}\left( t\right)
\int_{l\left( t\right) \cap \overline{M_{t}}}\left( \left\vert
u_{ij}\right\vert ^{2}-\left\vert \nabla \left\vert \nabla u\right\vert
\right\vert ^{2}\right) \left\vert \nabla u\right\vert ^{-2}dt  \notag \\
&&+\frac{1}{2}\int_{\varepsilon }^{1}\phi ^{2}\left( t\right) \int_{l\left(
t\right) \cap \overline{M_{t}}}\left( \left( \Delta u\right) ^{2}-2\frac{%
\left\langle \nabla \left\vert \nabla u\right\vert ,\nabla u\right\rangle }{%
\left\vert \nabla u\right\vert }\Delta u+\left\vert \nabla \left\vert \nabla
u\right\vert \right\vert ^{2}\right) \left\vert \nabla u\right\vert ^{-2}dt 
\notag \\
&&+\frac{1}{2}\int_{\varepsilon }^{1}\phi ^{2}\left( t\right) \int_{%
\widetilde{l}\left( t\right) \cap D\left( R\right) }\left\vert \nabla
\left\vert \nabla u\right\vert \right\vert ^{2}\left\vert \nabla
u\right\vert ^{-2}dt-3K\int_{M\setminus D}\left\vert \nabla u\right\vert
\phi ^{2}-C.  \notag
\end{eqnarray}%
Using the inequality $\left\vert u_{ij}\right\vert ^{2}-\left\vert \nabla
\left\vert \nabla u\right\vert \right\vert ^{2}\geq 0$ and Lemma \ref{lb},
the first term on the right hand side can be estimated as%
\begin{eqnarray*}
&&\frac{1}{2}\int_{\varepsilon }^{1}\phi ^{2}\left( t\right) \int_{l\left(
t\right) \cap \overline{M_{t}}}\left( \left\vert u_{ij}\right\vert
^{2}-\left\vert \nabla \left\vert \nabla u\right\vert \right\vert
^{2}\right) \left\vert \nabla u\right\vert ^{-2}dt \\
&\geq &\frac{1}{2}\int_{\varepsilon }^{1}\phi ^{2}\left( t\right)
\int_{l\left( t\right) \cap D\left( R\right) }\left( \left\vert
u_{ij}\right\vert ^{2}-\left\vert \nabla \left\vert \nabla u\right\vert
\right\vert ^{2}\right) \left\vert \nabla u\right\vert ^{-2}dt \\
&\geq &\frac{1}{4}\int_{\varepsilon }^{1}\phi ^{2}\left( t\right)
\int_{l\left( t\right) \cap D\left( R\right) }\left\vert \nabla \left\vert
\nabla u\right\vert \right\vert ^{2}\left\vert \nabla u\right\vert ^{-2}dt,
\end{eqnarray*}%
where the last line is by (\ref{Kato}) and the fact that $u=f$ on $D\left(
R\right) .$ Similarly, the second term on the right hand side of (\ref{c25})
is bounded by%
\begin{eqnarray*}
&&\frac{1}{2}\int_{\varepsilon }^{1}\phi ^{2}\left( t\right) \int_{l\left(
t\right) \cap \overline{M_{t}}}\left( \left( \Delta u\right) ^{2}-2\frac{%
\left\langle \nabla \left\vert \nabla u\right\vert ,\nabla u\right\rangle }{%
\left\vert \nabla u\right\vert }\Delta u+\left\vert \nabla \left\vert \nabla
u\right\vert \right\vert ^{2}\right) \left\vert \nabla u\right\vert ^{-2}dt
\\
&\geq &\frac{1}{2}\int_{\varepsilon }^{1}\phi ^{2}\left( t\right)
\int_{l\left( t\right) \cap D\left( R\right) }\left\vert \nabla \left\vert
\nabla u\right\vert \right\vert ^{2}\left\vert \nabla u\right\vert ^{-2}dt.
\end{eqnarray*}%
Consequently, (\ref{c25}) becomes%
\begin{eqnarray}
&&\int_{M\setminus D}\left( \left\vert u_{ij}\right\vert ^{2}-\left\vert
\nabla \left\vert \nabla u\right\vert \right\vert ^{2}+\mathrm{Ric}\left(
\nabla u,\nabla u\right) \right) \left\vert \nabla u\right\vert ^{-1}\phi
^{2}  \label{c26} \\
&\geq &\frac{3}{4}\int_{\varepsilon }^{1}\phi ^{2}\left( t\right)
\int_{l\left( t\right) \cap D\left( R\right) }\left\vert \nabla \left\vert
\nabla u\right\vert \right\vert ^{2}\left\vert \nabla u\right\vert
^{-2}dt-3K\int_{M\setminus D}\left\vert \nabla u\right\vert \phi ^{2}  \notag
\\
&&+\frac{1}{2}\int_{\varepsilon }^{1}\phi ^{2}\left( t\right) \int_{%
\widetilde{l}\left( t\right) \cap D\left( R\right) }\left\vert \nabla
\left\vert \nabla u\right\vert \right\vert ^{2}\left\vert \nabla
u\right\vert ^{-2}dt-C.  \notag
\end{eqnarray}%
On the other hand, the left side of (\ref{c22}) satisfies 
\begin{equation*}
\int_{M\setminus D}\left( \Delta \left\vert \nabla u\right\vert \right) \phi
^{2}=\int_{M\setminus D}\left\vert \nabla u\right\vert \Delta \phi
^{2}-\int_{\partial D}\phi ^{2}\left\vert \nabla u\right\vert _{\nu }.
\end{equation*}%
Since $u=f$ on $D\left( R\right) \setminus D,$ we have%
\begin{equation*}
\int_{\partial D}\phi ^{2}\left\vert \nabla u\right\vert _{\nu
}=\int_{\partial D}\phi ^{2}\left\vert \nabla f\right\vert _{\nu }=C.
\end{equation*}%
Note furthermore that $\Delta \phi ^{2}=0$ on $M\setminus L\left(
\varepsilon ,2\varepsilon \right) .$ On $L\left( \varepsilon ,2\varepsilon
\right) $, we have by Lemma \ref{u} and (\ref{phi})%
\begin{eqnarray*}
\left\vert \Delta \phi ^{2}\right\vert  &\leq &\frac{C}{\varepsilon ^{2}}%
\,\left( \left\vert \Delta u\right\vert +\left\vert \nabla u\right\vert
^{2}\right)  \\
&\leq &\frac{C}{R\varepsilon ^{2}}+\frac{C}{\varepsilon ^{2}}\left\vert
\nabla u\right\vert ^{2} \\
&\leq &C\varepsilon ^{2}+\frac{C}{\varepsilon }\left\vert \nabla
u\right\vert .
\end{eqnarray*}%
Hence, Lemma \ref{I} and (\ref{c15}) imply that 
\begin{equation*}
\int_{M\setminus D}\left\vert \nabla u\right\vert \left\vert \Delta \phi
^{2}\right\vert \leq C\varepsilon ^{2}\mathrm{Vol}\left( L\left( \varepsilon
,1\right) \right) +\frac{C}{\varepsilon }\int_{L\left( \varepsilon
,2\varepsilon \right) }\left\vert \nabla u\right\vert ^{2}\leq C.
\end{equation*}%
This proves that 
\begin{equation*}
\int_{M\setminus D}\left( \Delta \left\vert \nabla u\right\vert \right) \phi
^{2}\leq C.
\end{equation*}

We therefore conclude from (\ref{c22}) and (\ref{c26}) that%
\begin{eqnarray}
&&\frac{3}{4}\int_{\varepsilon }^{1}\phi ^{2}\left( t\right) \int_{l\left(
t\right) \cap D\left( R\right) }\left\vert \nabla \left\vert \nabla
u\right\vert \right\vert ^{2}\left\vert \nabla u\right\vert ^{-2}dt
\label{c30} \\
&&+\frac{1}{2}\int_{\varepsilon }^{1}\phi ^{2}\left( t\right) \int_{%
\widetilde{l}\left( t\right) \cap D\left( R\right) }\left\vert \nabla
\left\vert \nabla u\right\vert \right\vert ^{2}\left\vert \nabla
u\right\vert ^{-2}dt  \notag \\
&\leq &3K\int_{M\setminus D}\left\vert \nabla u\right\vert \phi ^{2}+C. 
\notag
\end{eqnarray}%
Integrating by parts yields 
\begin{eqnarray*}
\int_{M\setminus D}\left\langle \nabla \left\vert \nabla u\right\vert
,\nabla u\right\rangle u^{-1}\phi ^{2} &=&-\int_{M\setminus D}\left\vert
\nabla u\right\vert u^{-1}\left( \Delta u\right) \phi ^{2}+\int_{M\setminus
D}\left\vert \nabla u\right\vert ^{3}u^{-2}\phi ^{2} \\
&&-\int_{M\setminus D}\left\vert \nabla u\right\vert u^{-1}\left\langle
\nabla u,\nabla \phi ^{2}\right\rangle -\int_{\partial D}\left\vert \nabla
u\right\vert u^{-1}u_{\nu }\phi ^{2} \\
&\geq &\int_{M\setminus D}\left\vert \nabla u\right\vert ^{3}u^{-2}\phi
^{2}-C.
\end{eqnarray*}%
In the last line we have used Lemma \ref{u} and Lemma \ref{I} to conclude
that 
\begin{equation*}
\left\vert \int_{M\setminus D}\left\vert \nabla u\right\vert
u^{-1}\left\langle \nabla u,\nabla \phi ^{2}\right\rangle \right\vert \leq 
\frac{C}{\varepsilon }\int_{L\left( \varepsilon ,2\varepsilon \right)
}\left\vert \nabla u\right\vert ^{3}u^{-1}\leq C
\end{equation*}%
as well as 
\begin{equation*}
\left\vert -\int_{M\setminus D}\left\vert \nabla u\right\vert u^{-1}\left(
\Delta u\right) \phi ^{2}\right\vert \leq \frac{C}{R}\mathrm{Vol}\left(
L(\varepsilon ,1)\right) \leq C\varepsilon 
\end{equation*}%
by Lemma \ref{u} and (\ref{c15}). Moreover, in view of (\ref{c16}), 
\begin{equation*}
\int_{M\setminus D\left( R\right) }\left\langle \nabla \left\vert \nabla
u\right\vert ,\nabla u\right\rangle u^{-1}\phi ^{2}\leq C\int_{M\setminus
D\left( R\right) }\left\vert \nabla \left\vert \nabla u\right\vert
\right\vert ^{2}\phi ^{2}+C\varepsilon .
\end{equation*}%
Let $\sigma $ be a cut-off function with support in $M\setminus D\left( 
\frac{R}{2}\right) $ so that $\sigma =1$ on $M\setminus D\left( R\right) $
and $\left\vert \nabla \sigma \right\vert \leq \frac{C}{R}$. According to
the Bochner formula, and using the Ricci curvature lower bound, we have 
\begin{eqnarray*}
\int_{M}\left\vert \nabla \left\vert \nabla u\right\vert \right\vert
^{2}\phi ^{2}\sigma ^{2} &\leq &\frac{1}{2}\int_{M}\phi ^{2}\sigma
^{2}\Delta \left\vert \nabla u\right\vert ^{2}+C\int_{M}\left\vert \nabla
u\right\vert ^{2}\phi ^{2}\sigma ^{2} \\
&=&-2\int_{M}\left\langle \nabla \left( \phi \sigma \right) ,\nabla
\left\vert \nabla u\right\vert \right\rangle \phi \sigma \left\vert \nabla
u\right\vert +C\int_{M}\left\vert \nabla u\right\vert ^{2}\phi ^{2}\sigma
^{2} \\
&\leq &\frac{1}{2}\int_{M}\left\vert \nabla \left\vert \nabla u\right\vert
\right\vert ^{2}\phi ^{2}\sigma ^{2}+C\int_{M}\left\vert \nabla u\right\vert
^{2}\left\vert \nabla \phi \right\vert ^{2}\sigma ^{2} \\
&&+C\int_{M}\left\vert \nabla u\right\vert ^{2}\left\vert \nabla \sigma
\right\vert ^{2}\phi ^{2}+C\int_{M}\left\vert \nabla u\right\vert ^{2}\phi
^{2}\sigma ^{2}.
\end{eqnarray*}%
Hence, (\ref{c16}) implies that 
\begin{equation*}
\int_{M\setminus D\left( R\right) }\left\vert \nabla \left\vert \nabla
u\right\vert \right\vert ^{2}\phi ^{2}\leq C\varepsilon .
\end{equation*}%
In conclusion, we have proved that 
\begin{equation}
\int_{M\setminus D}\left\vert \nabla u\right\vert ^{3}u^{-2}\phi ^{2}\leq
\int_{D\left( R\right) \setminus D}\left\langle \nabla \left\vert \nabla
u\right\vert ,\nabla u\right\rangle u^{-1}\phi ^{2}+C.  \label{c31}
\end{equation}

By the co-area formula, 
\begin{eqnarray*}
\int_{D\left( R\right) \setminus D}\left\langle \nabla \left\vert \nabla
u\right\vert ,\nabla u\right\rangle u^{-1}\phi ^{2} &=&\int_{\varepsilon
}^{1}\phi ^{2}\left( t\right) \int_{l\left( t\right) \cap D\left( R\right)
}\left\langle \nabla \left\vert \nabla u\right\vert ,\nabla u\right\rangle
u^{-1}\,\left\vert \nabla u\right\vert ^{-1}\,dt \\
&&+\int_{\varepsilon }^{1}\phi ^{2}\left( t\right) \int_{\widetilde{l}\left(
t\right) \cap D\left( R\right) }\left\langle \nabla \left\vert \nabla
u\right\vert ,\nabla u\right\rangle u^{-1}\,\left\vert \nabla u\right\vert
^{-1}\,dt.
\end{eqnarray*}%
The first term on the right side is estimated by 
\begin{eqnarray*}
&&\int_{\varepsilon }^{1}\phi ^{2}\left( t\right) \int_{l\left( t\right)
\cap D\left( R\right) }\left\langle \nabla \left\vert \nabla u\right\vert
,\nabla u\right\rangle u^{-1}\,\left\vert \nabla u\right\vert ^{-1}\,dt \\
&\leq &\frac{1}{2}\int_{\varepsilon }^{1}\phi ^{2}\left( t\right)
\int_{l\left( t\right) \cap D\left( R\right) }\left\vert \nabla \left\vert
\nabla u\right\vert \right\vert ^{2}\left\vert \nabla u\right\vert ^{-2}dt \\
&&+\frac{1}{2}\int_{D\left( R\right) \setminus D}\left\vert \nabla
u\right\vert ^{3}u^{-2}\phi ^{2},
\end{eqnarray*}%
and the second term by 
\begin{eqnarray*}
&&\int_{\varepsilon }^{1}\phi ^{2}\left( t\right) \int_{\widetilde{l}\left(
t\right) \cap D\left( R\right) }\left\langle \nabla \left\vert \nabla
u\right\vert ,\nabla u\right\rangle u^{-1}\,\left\vert \nabla u\right\vert
^{-1}\,dt \\
&\leq &\frac{1}{4}\int_{\varepsilon }^{1}\phi ^{2}\left( t\right) \int_{%
\widetilde{l}\left( t\right) \cap D\left( R\right) }\left\vert \nabla
\left\vert \nabla u\right\vert \right\vert ^{2}\left\vert \nabla
u\right\vert ^{-2}dt \\
&&+\int_{\varepsilon }^{1}\phi ^{2}(t)\int_{\widetilde{l}\left( t\right) \cap
D\left( R\right) }\left\vert \nabla u\right\vert ^{2}u^{-2}dt.
\end{eqnarray*}%
However, by Lemma \ref{u} and Lemma \ref{area}, 
\begin{equation*}
\int_{\varepsilon }^{1}\phi ^{2}(t)\int_{\widetilde{l}\left( t\right) \cap
D\left( R\right) }\left\vert \nabla u\right\vert ^{2}u^{-2}dt\leq
C\int_{\varepsilon }^{1}\mathrm{Area}\left( \widetilde{l}\left( t\right)
\right) dt\leq C\varepsilon .
\end{equation*}%
Hence, by (\ref{c30}), 
\begin{eqnarray*}
\int_{D\left( R\right) \setminus D}\left\langle \nabla \left\vert \nabla
u\right\vert ,\nabla u\right\rangle u^{-1}\phi ^{2} &\leq &\frac{1}{2}%
\int_{\varepsilon }^{1}\phi ^{2}\left( t\right) \int_{l\left( t\right) \cap
D\left( R\right) }\left\vert \nabla \left\vert \nabla u\right\vert
\right\vert ^{2}\left\vert \nabla u\right\vert ^{-2}dt \\
&&+\frac{1}{4}\int_{\varepsilon }^{1}\phi ^{2}\left( t\right) \int_{%
\widetilde{l}\left( t\right) \cap D\left( R\right) }\left\vert \nabla
\left\vert \nabla u\right\vert \right\vert ^{2}\left\vert \nabla
u\right\vert ^{-2}dt \\
&&+\frac{1}{2}\int_{D\left( R\right) \setminus D}\left\vert \nabla
u\right\vert ^{3}u^{-2}\phi ^{2}+C \\
&\leq &2K\int_{M\setminus D}\left\vert \nabla u\right\vert \phi ^{2}+\frac{1%
}{2}\int_{D\left( R\right) \setminus D}\left\vert \nabla u\right\vert
^{3}u^{-2}\phi ^{2}+C.
\end{eqnarray*}%
Combining this with (\ref{c31}), we conclude that 
\begin{equation*}
\int_{M\setminus D}\left\vert \nabla u\right\vert ^{3}u^{-2}\phi ^{2}\leq
4K\int_{M\setminus D}\left\vert \nabla u\right\vert \phi ^{2}+C.
\end{equation*}%
Now together with 
\begin{equation*}
4K\int_{M\setminus D}\left\vert \nabla u\right\vert \phi ^{2}\leq \frac{1}{3}%
\int_{M\setminus D}\left\vert \nabla u\right\vert ^{3}u^{-2}\phi ^{2}+\frac{%
16}{3}K^{\frac{3}{2}}\int_{M\setminus D}u\phi ^{2},
\end{equation*}%
which follows from the elementary inequality $ab\leq \frac{a^{3}}{3}+\frac{%
2b^{\frac{3}{2}}}{3}$ with $a=\left\vert \nabla u\right\vert u^{-\frac{2}{3}}
$ and $b=4Ku^{\frac{2}{3}}$, we arrive at 
\begin{equation}
\int_{M\setminus D}\left\vert \nabla u\right\vert ^{3}u^{-2}\phi ^{2}\leq
8K^{\frac{3}{2}}\int_{M\setminus D}u\phi ^{2}+C.  \label{gu}
\end{equation}%
This verifies \textbf{Claim 1}.

We now turn to the following claim. 
\begin{equation*}
\text{\textbf{Claim 2:\ \ }}\lambda _{1}\left( M\right) \int_{M\setminus
D}u\phi ^{2}\leq \frac{1}{4}\int_{M\setminus D}\left\vert \nabla
u\right\vert ^{2}u^{-1}\phi ^{2}+C.
\end{equation*}

Let $\xi $ be a Lipschitz cut-off such that $\xi =0$\ on $D\left(
R_{0}\right) $ and $\xi =1$ on $M\setminus D\left( R_{0}+1\right) $. Since
the function $u\phi ^{2}\xi ^{2}$ has compact support in $M,$ we have%
\begin{eqnarray*}
\lambda _{1}\left( M\right) \int_{M}u\phi ^{2}\xi ^{2} &\leq
&\int_{M}\left\vert \nabla \left( u^{\frac{1}{2}}\phi \xi \right)
\right\vert ^{2} \\
&=&\frac{1}{4}\int_{M}\left\vert \nabla u\right\vert ^{2}u^{-1}\phi ^{2}\xi
^{2}+\int_{M}u\left\vert \nabla \left( \phi \xi \right) \right\vert ^{2} \\
&&+\frac{1}{2}\int_{M}\left\langle \nabla u,\nabla \left( \phi ^{2}\xi
^{2}\right) \right\rangle .
\end{eqnarray*}%
According to Lemma \ref{I} we have $\int_{M\setminus D}u\left\vert \nabla
\phi \right\vert ^{2}\leq C$. Furthermore, we perform integration by parts
and get from Lemma \ref{u} and (\ref{c16}) that 
\begin{equation*}
\int_{M}\left\langle \nabla u,\nabla \left( \phi ^{2}\xi ^{2}\right)
\right\rangle =-\int_{M}\left( \Delta u\right) \phi ^{2}\xi ^{2}\leq
C\varepsilon .
\end{equation*}%
In conclusion, we have proved that 
\begin{equation*}
\lambda _{1}\left( M\right) \int_{M\setminus D}u\phi ^{2}\xi ^{2}\leq \frac{1%
}{4}\int_{M\setminus D}\left\vert \nabla u\right\vert ^{2}u^{-1}\phi ^{2}\xi
^{2}+C.
\end{equation*}%
\textbf{Claim 2} now follows.

We now show $\lambda _{1}\left( M\right) \leq K.$ Assume by contradiction
that%
\begin{equation}
\lambda _{1}\left( M\right) >K.  \label{c32}
\end{equation}

Combining \textbf{Claim 1} and \textbf{Claim 2} we conclude that%
\begin{eqnarray*}
\lambda _{1}\left( M\right) \int_{M\setminus D}u\phi ^{2} &\leq &\frac{1}{4}%
\int_{M\setminus D}\left\vert \nabla u\right\vert ^{2}u^{-1}\phi ^{2}+C \\
&\leq &\frac{1}{4}\left( \int_{M\setminus D}\left\vert \nabla u\right\vert
^{3}u^{-2}\phi ^{2}\right) ^{\frac{2}{3}}\left( \int_{M\setminus
D}u\,\phi^2\right) ^{\frac{1}{3}}+C \\
&\leq &K\int_{M\setminus D}u\phi ^{2}+C\left( \int_{M\setminus
D}u\,\phi^2\right) ^{\frac{1}{3}}+C.
\end{eqnarray*}%
By (\ref{c32}), it implies that $\int_{M\setminus D}u\phi ^{2}\leq C.$

For arbitrary large $r$ such that $D\subset B_{p}(r),$ let%
\begin{equation*}
t\left( r\right) =\min_{x\in \partial B_{p}\left( r\right) }f\left( x\right).
\end{equation*}%
The maximum principle implies that 
\begin{equation*}
B_{p}\left( r\right) \subset \left\{ f>t\left( r\right) \right\} .
\end{equation*}%
So for $\varepsilon <\frac{1}{2}t\left( r\right) $ and all $R>\max \{R_{0},%
\frac{1}{\varepsilon ^{4}}\}$ large, 
\begin{equation*}
B_{p}\left( r\right) \setminus D\subset D\left( R\right) \cap L\left(
2\varepsilon ,1\right) .
\end{equation*}%
As $u=f$ on $D\left( R\right) \cap L\left(2\varepsilon ,1\right),$ it
follows that 
\begin{equation}
\int_{B_{p}\left( r\right) \setminus D }f\leq \int_{M\setminus D}u\phi ^{2}
\leq C  \label{c34}
\end{equation}
with the constant $C$ independent of $r.$ On the other hand, the co-area
formula and (\ref{CY}), (\ref{c6}) imply 
\begin{eqnarray*}
\int_{B_{p}\left( r\right) \setminus D }f &\geq& C\, \int_{B_{p}\left(
r\right) \setminus D}\left\vert \nabla f\right\vert \\
&\geq& C\,\int_{R_0}^r dt\int_{\partial B_p(t)}\left\vert \nabla f\right\vert
\\
&\geq& C\,C_0\,(r-R_0)
\end{eqnarray*}%
for any $r>R_0.$ This contradicts (\ref{c34}). Therefore, (\ref{c32}) does
not hold. The theorem is proved.
\end{proof}

\section{Manifolds with maximal bottom spectrum\label{Max}}

In this section we prove Theorem \ref{A2}. We start by making some general
discussions without restriction on the dimension $n$ of $M.$ Note that for a
manifold $M$ with a finite volume end and positive bottom spectrum $\lambda
_{1}(M)>0,$ since its volume must be infinite, it has an infinite volume end
as well. By Nakai \cite{N} and Li-Tam \cite{LT1} there exists a positive
harmonic function $f$ on $M$ such that $f$ is proper and goes to infinity on
a given finite volume end $F$ and it is bounded away from $F.$ Therefore,
without loss of generality, we may assume that $F=\{f\geq t_{0}\}$ for some
large positive number $t_{0}$ and $F$ is connected. We view $E=M\setminus F$
as an infinite volume end and denote $E\left( R\right) =E\cap D\left(
R\right) ,$ where $D(R)=\{\rho (x)<R\}$ as in Section \ref{Est} and $\rho $
be the function from Lemma \ref{r}. By Li-Wang \cite{LW1}, such $f$ may be
chosen to satisfy 
\begin{equation}
\int_{E\setminus E\left( R\right) }f^{2}\leq C\,e^{-\frac{1}{C}R}  \label{a2}
\end{equation}%
for some constant $C>0$ depending on $\lambda _{1}(M).$ In particular, 
\begin{equation}
\inf_{E}f=0\text{ and }\int_{E}f^{2}<\infty .  \label{a1} \\
\end{equation}%
According to \cite{LW}, for such $f,$ $\int_{\left\{ f=t\right\} }\left\vert
\nabla f\right\vert $ is independent of $t.$ So we normalize $f$ such that 
\begin{equation}
\int_{\left\{ f=t\right\} }\left\vert \nabla f\right\vert =1.  \label{a3}
\end{equation}

For given $R>0,$ define cut-off function $\psi $ on $M$ as in Section \ref%
{Est} by $\psi \left( x\right) =\psi \left( \rho \left( x\right) \right) $
with $\psi $ given in (\ref{c3}). Define the function 
\begin{equation}
u\left( x\right) =\left\{ 
\begin{array}{c}
\psi f \\ 
f%
\end{array}%
\right. 
\begin{array}{c}
\text{on} \\ 
\text{on}%
\end{array}%
\begin{array}{c}
E \\ 
F.%
\end{array}
\label{a4}
\end{equation}%
Note that $u$ is smooth on $M$ and satisfies 
\begin{equation*}
u=f\text{ \ on }F\cup E\left( R\right) \text{ and }u=0\text{\ on }E\setminus
E\left( 2R\right) .
\end{equation*}%
Moreover, the level set $\{u=t\}$ of $u$ is compact for each $t>0.$ Lemma %
\ref{u} implies that on the end $E$ 
\begin{eqnarray}
\left\vert \nabla u\right\vert &\leq &C\left( u+\frac{1}{R}\right)
\label{a5} \\
\left\vert \Delta u\right\vert &\leq &\frac{C}{R}  \notag \\
\left\vert \nabla \left( \Delta u\right) \right\vert &\leq &\frac{C}{R}%
\left( \left\vert \nabla ^{2}\rho \right\vert ^{2}+\left\vert \nabla
^{2}f\right\vert ^{2}+1\right) .  \notag
\end{eqnarray}%
On the end $F,$ for $M$ with Ricci curvature lower bound, by (\ref{CY}), 
\begin{equation}
\left\vert \nabla u\right\vert \leq Cu\text{ \ and }\Delta u=0.  \label{a6}
\end{equation}

Similar to the proof of Theorem \ref{A1}, define $L\left(t,\infty \right) $
to be the connected component of the set of $\{u>t\}$ that contains $F.$
Also, 
\begin{equation}
l\left( t\right) =\partial L\left( t,\infty \right) .  \label{a7}
\end{equation}%
Parallel to Lemma \ref{lb} and Lemma \ref{ls}, we have the following.

\begin{lemma}
\label{Level}Let $M_{t}$ be the union of all unbounded connected components
of $M\setminus \overline{L\left( t,\infty \right) },$ and $l_{0}\left(
t\right) $ the union of all connected components of $l\left( t\right) $ that
are homeomorphic to $\mathbb{S}^{n-1}.$ If the homology group $H_{n-1}(M,%
\mathbb{Z})$ contains no spherical classes, then 
\begin{equation*}
l_{0}\left( t\right) \subset l\left( t\right) \cap \left( M\setminus 
\overline{M_{t}}\right) \subset E\setminus D\left( R\right) .
\end{equation*}
\end{lemma}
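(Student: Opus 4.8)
The plan is to establish the two inclusions separately, following respectively the schemes of Lemma \ref{lb} and Lemma \ref{ls}, and exploiting throughout the feature peculiar to the present setting: the region $L(t,\infty)$ now contains the \emph{entire} finite-volume end $F$, which is unbounded, while $f$ is globally harmonic and blows up along $F$.

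I would begin with the inclusion $l(t)\cap(M\setminus\overline{M_t})\subset E\setminus D(R)$, which is modeled on Lemma \ref{lb}. As there, write $M\setminus\overline{L(t,\infty)}=M_t\cup\Omega_t$ with $\Omega_t$ the union of the bounded components, so that $M\setminus l(t)=L(t,\infty)\sqcup M_t\sqcup\Omega_t$, and note that $u=t$ on $l(t)$. Since $f$ is harmonic on $M$ and $u\le f$ everywhere with $u=t$ on $\partial\Omega_t\subset l(t)$, the minimum principle applied to $f$ on each precompact component of $\Omega_t$ gives $f\ge t$ on $\overline{\Omega_t}$. Now suppose, for contradiction, that some $x_1\in l(t)\cap(M\setminus\overline{M_t})$ lies in $D(R)$, and choose a small connected open ball $U$ about $x_1$ with $U\subset D(R)\cap(M\setminus\overline{M_t})$. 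Then $u=f$ on $U$, and $U\subset M\setminus M_t=l(t)\cup L(t,\infty)\cup\Omega_t$, on which respectively $f=u=t$, $f=u>t$, and $f\ge t$; hence $f\ge t$ on $U$ with equality at the interior point $x_1$. The strong minimum principle then forces $f\equiv t$ on $U$, hence $f\equiv t$ on $M$ by unique continuation, contradicting $f\to\infty$ on $F$. This shows $l(t)\cap(M\setminus\overline{M_t})\subset M\setminus D(R)$; and since $u=t$ on $l(t)$ while $u=f\ge t_0>t$ on $F$, we have $l(t)\subset E$, which gives the desired inclusion into $E\setminus D(R)$.

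For the inclusion $l_0(t)\subset l(t)\cap(M\setminus\overline{M_t})$, I would argue by contradiction exactly along the lines of Lemma \ref{ls}. Suppose a component $l_1(t)$ of $l_0(t)$ is contained in $\overline{M_t}$. Being homeomorphic to $\mathbb{S}^{n-1}$, it represents a spherical class, hence is trivial in $H_{n-1}(M,\mathbb{Z})$, so $M\setminus l_1(t)$ has a bounded connected component; since $M_t$ is unbounded and $l_1(t)\subset\overline{M_t}$, this bounded component must be $L(t,\infty)\cup\overline{\Omega_t}$, just as in the proof of Lemma \ref{ls}. But $F\subset L(t,\infty)$, and $F$, being an end of $M$, is unbounded — a contradiction. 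Hence no component of $l_0(t)$ meets $\overline{M_t}$, which is precisely the asserted inclusion.

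The main obstacle lies in the first inclusion above (the Lemma \ref{lb}-type argument): in contrast with Lemma \ref{lb}, the set $M\setminus\overline{M_t}$ is no longer bounded — it contains the end $F$ — so one cannot apply the maximum principle to $f$ on it globally. The remedy is to localize: establish $f\ge t$ only on the precompact pieces comprising $\Omega_t$, and then run the strong maximum principle together with unique continuation in a small ball about the hypothetical bad point. The step requiring care is checking that such a ball is genuinely covered by the three pieces $l(t)$, $L(t,\infty)$, $\Omega_t$ on which $f\ge t$ is already known; once this is in place, the remainder is bookkeeping identical to the two model lemmas. It is worth emphasizing that the essential use of $F$ — that it is a genuine, hence unbounded, end along which $f$ blows up — is exactly what makes both inclusions go through.
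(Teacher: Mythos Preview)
Your proof is correct and follows essentially the same route as the paper, invoking the arguments of Lemmas \ref{lb} and \ref{ls} together with the new feature that $F\subset L(t,\infty)$ is unbounded; your localized use of the strong minimum principle is in fact more explicit than the paper's terse appeal to ``the same argument as in Lemma \ref{lb}.'' One small remark: your concluding step $l(t)\subset E$ via $u=f\ge t_0>t$ on $F$ tacitly assumes $t<t_0$, whereas the paper's phrasing (``since $u$ is harmonic on $F$ \ldots\ by the maximum principle'') is meant to cover all $t$---for $t\ge t_0$ one checks, using harmonicity of $u=f$ on $F$, that any bounded component $\Omega_t'$ would force $u\equiv t$, so $\Omega_t=\emptyset$ and the inclusion is vacuous.
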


\begin{proof}
For the first inclusion, note that as $H_{n-1}\left( M,\mathbb{Z}\right) $
contains no spherical classes, any component of $l_{0}\left( t\right) $,
being spherical, must bound a compact component of $M\setminus \overline{%
L\left( t,\infty \right) }.$ Obviously, such a component is disjoint from $%
M_{t}.$

For the second inclusion, the same argument as in Lemma \ref{lb} implies that%
\begin{equation*}
l\left( t\right) \cap \left( M\setminus \overline{M_{t}}\right) \subset
M\setminus D\left( R\right) .
\end{equation*}%
Since $u$ is harmonic on $F,$ the second inclusion of the lemma follows by
the maximum principle.
\end{proof}

As in (\ref{c16}) we have that 
\begin{equation}
\mathrm{Vol}\left( \left( E\setminus D\left( R\right) \right) \cap L\left(
\delta ,\infty \right) \right) \leq \frac{C}{\delta ^{2}}e^{-\frac{1}{C}R}
\label{V}
\end{equation}%
for any $\delta >0,$ where $C$ is independent of $R$ and $\delta.$

Denote 
\begin{equation*}
\widetilde{L}\left( t,\infty \right) =\left( \left\{ u>t\right\} \setminus
L\left( t,\infty \right) \right) \cap \left( L\left( \delta ,\infty \right)
\right)
\end{equation*}%
and 
\begin{equation}
\widetilde{l}\left( t\right) =\partial \widetilde{L}\left( t,\infty \right) .
\label{a8}
\end{equation}%
Since $u=f$ on $F=L\left(t_0, \infty \right)$ and $u$ has compact support on 
$E,$ it follows that $\widetilde{L}\left( t,\infty\right) $ is bounded.
Moreover, 
\begin{equation}
L\left( t,\infty \right) \subset F\text{ \ and \ }\widetilde{L}\left(
t,\infty \right) =\varnothing \text{ \ for }t\geq t_{0}.  \label{a9}
\end{equation}%
Note that Lemma \ref{area} and (\ref{a1}) imply that 
\begin{equation}
\int_{\delta }^{\infty }\mathrm{Area}\left( \widetilde{l}\left( t\right)
\right) dt\leq \frac{C}{\sqrt{R\delta ^{3}}}  \label{A}
\end{equation}%
for a constant $C$ independent of $R$ and $\delta,$ and Lemma \ref{I} also
holds with $C_{0}=1$ in view of the normalization (\ref{a3}).

\begin{lemma}
\label{I'}For any $\delta >0$ and any $a,b>\delta $, we have 
\begin{equation*}
\left\vert \int_{L\left( a,b\right) }\left\vert \nabla u\right\vert
^{2}\varphi \left( u\right) -\int_{a}^{b}\varphi \left( t\right)
dt\right\vert \leq \frac{C}{\sqrt{R}\delta ^{2}}\sup_{\left( a^{\prime
},b^{\prime }\right) }\left\vert \varphi \right\vert
\end{equation*}%
for any \thinspace integrable function $\varphi $ on $\left( a,b\right),$
where $a^{\prime }=\min \left\{ t_{0},a\right\} $ and $b^{\prime }=\min
\left\{ t_{0},b\right\},$ and the constant $C$ is independent of $R$ and $%
\delta.$
\end{lemma}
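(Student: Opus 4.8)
The plan is to follow the proof of Lemma \ref{I} almost verbatim. The one structural difference is that here there is no interior domain $D$: the function $u$ equals the proper harmonic exhaustion $f$ on the finite volume end $F$ and is globally defined, and this is exactly what produces the truncation at level $t_{0}$ in the statement.

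\smallskip

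\emph{Step 1 (flux estimate).} First I would show that for every regular value $t>\delta$,
\[
\Bigl|\int_{l(t)}\left|\nabla u\right|-1\Bigr|\le \frac{C}{R\delta ^{2}},
\]
the left side in fact vanishing once $t\ge t_{0}$. Fix a regular value $N\ge \max\{t_{0},t\}$. Since the level sets of $u$ are compact, $L(t,N):=L(t,\infty )\cap \{t<u<N\}$ is precompact; since $\widetilde L(s,\infty )=\varnothing $ for $s\ge t_{0}$ by (\ref{a9}) and $L(s,\infty )\subset L(t,\infty )$ whenever $s>t$, its boundary is $l(t)\cup l(N)$. Applying the divergence theorem to $u$ on $L(t,N)$, with outer unit normal $-\nabla u/\left|\nabla u\right|$ along $l(t)$ and $+\nabla u/\left|\nabla u\right|$ along $l(N)$, gives
\[
\int_{L(t,N)}\Delta u=-\int_{l(t)}\left|\nabla u\right|+\int_{l(N)}\left|\nabla u\right|.
\]
For $N\ge t_{0}$ the maximum principle shows $\{f>N\}$ is connected and disjoint from $E$, so $L(N,\infty )=\{f>N\}$ and $l(N)=\{f=N\}$; hence $\int_{l(N)}\left|\nabla u\right|=\int_{\{f=N\}}\left|\nabla f\right|=1$ by the normalization (\ref{a3}). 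On the other hand $\Delta u=0$ on $F\cup E(R)$ while $\left|\Delta u\right|\le C/R$ on $E\setminus E(R)$ by (\ref{a5}), and $L(t,N)\cap (E\setminus D(R))\subset L(\delta ,\infty )\cap (E\setminus D(R))$, so (\ref{V}) yields $\bigl|\int_{L(t,N)}\Delta u\bigr|\le \frac{C}{R}\,\mathrm{Vol}\bigl(L(\delta ,\infty )\cap (E\setminus D(R))\bigr)\le \frac{C}{R\delta ^{2}}$. This proves the estimate; and since $\int_{L(t,N)}\Delta u=\int_{L(t,t_{0})}\Delta u$, the left side is exactly $0$ when $t\ge t_{0}$.

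\smallskip

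\emph{Step 2 (co-area formula and error terms).} Next I would apply the co-area formula. For a regular value $t\in (a,b)$ one checks, as in Lemma \ref{I}, that $\{u=t\}\cap L(a,b)=l(t)\sqcup \bigl(\widetilde l(t)\cap L(a,b)\bigr)$, so
\[
\int_{L(a,b)}\left|\nabla u\right|^{2}\varphi (u)=\int_{a}^{b}\varphi (t)\int_{l(t)}\left|\nabla u\right|\,dt+\int_{a}^{b}\varphi (t)\int_{\widetilde l(t)\cap L(a,b)}\left|\nabla u\right|\,dt.
\]
Subtracting $\int_{a}^{b}\varphi (t)\,dt$ leaves two error terms. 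In the first, $\int_{a}^{b}\varphi (t)\bigl(\int_{l(t)}\left|\nabla u\right|-1\bigr)\,dt$, the integrand vanishes for $t\ge t_{0}$ by Step 1, so only $t\in (a,\min\{t_{0},b\})\subset (a^{\prime },b^{\prime })$ contributes, and the flux estimate bounds it by $t_{0}\,\frac{C}{R\delta ^{2}}\sup_{(a^{\prime },b^{\prime })}\left|\varphi \right|$. In the second, $\widetilde l(t)=\varnothing $ for $t\ge t_{0}$ by (\ref{a9}), so again only $t\in (a^{\prime },b^{\prime })$ matters; applying the divergence theorem on $\widetilde L(t,\infty )\subset E\cap \{u>\delta \}$ as in (\ref{c11}), together with $\int_{E}f^{2}<\infty $, gives $\int_{\widetilde l(t)}\left|\nabla u\right|\le \frac{C}{R}\,\mathrm{Vol}\bigl(E\cap \{u>\delta \}\bigr)\le \frac{C}{R\delta ^{2}}$, so this term too is bounded by $\frac{C}{R\delta ^{2}}\sup_{(a^{\prime },b^{\prime })}\left|\varphi \right|$. (Alternatively one may bound the second term by $\frac{C}{\sqrt{R\delta ^{3}}}\sup \left|\varphi \right|$ using Lemma \ref{u} and (\ref{A}), exactly as in Lemma \ref{I}.) Adding the two contributions and using $R\ge 1$ gives the claimed bound $\frac{C}{\sqrt{R}\delta ^{2}}\sup_{(a^{\prime },b^{\prime })}\left|\varphi \right|$.

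\smallskip

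I expect the main obstacle to be Step 1, and within it two delicate points: verifying that $\int_{l(N)}\left|\nabla u\right|=1$ for $N\ge t_{0}$, which rests on the connectedness of the superlevel sets $\{f>N\}$ inside the finite volume end (a maximum principle argument), and keeping the orientations of the unit normals straight in the divergence theorem. One must also be careful with the bookkeeping observation that every error term is supported in $\{t<t_{0}\}$, as this is precisely what forces the supremum in the conclusion to be taken over $(a^{\prime },b^{\prime })$ rather than $(a,b)$.
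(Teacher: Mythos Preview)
Your proposal is correct and follows essentially the same route as the paper, which does not give a separate argument for Lemma~\ref{I'} but simply remarks that Lemma~\ref{I} holds with $C_{0}=1$ in view of the normalization~(\ref{a3}). The only structural adaptation needed is the one you identify and carry out: since there is no interior boundary $\partial D$ here, the role of the fixed flux surface is played by a high level set $l(N)\subset F$ with $N\ge t_{0}$, on which $u=f$ and $\int_{l(N)}|\nabla u|=1$; your use of the divergence theorem on $L(t,N)$ together with (\ref{V}) then reproduces the flux estimate~(\ref{c17}) in this setting, and your observation that all error contributions vanish for $t\ge t_{0}$ correctly accounts for the truncated supremum over $(a',b')$.
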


We also note the following.

\begin{lemma}
\label{Int_u}For any $\delta >0$ and any Lipschitz function $0\leq \chi \leq
1$ with compact support in $\left( \delta ,\infty \right) ,$ 
\begin{eqnarray*}
\lambda _{1}\left( M\right) \int_{L\left( \delta ,\infty \right) }u\chi
^{2}\left( u\right) &\leq &\frac{1}{4}\int_{\delta }^{\infty }t^{-1}\chi
^{2}\left( t\right) dt+\left( 1+\frac{C}{\sqrt{R}\delta ^{2}}\right)
\int_{\delta }^{\infty }t\left( \chi ^{\prime }\right) ^{2}\left( t\right) dt
\\
&&+\frac{C}{\sqrt{R}\delta ^{3}},
\end{eqnarray*}%
where $C$ is independent of $R$ and $\delta .$
\end{lemma}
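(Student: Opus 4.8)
The plan is to feed the Poincar\'e inequality the test function $\varphi=u^{1/2}\chi(u)$, cut off to $L(\delta,\infty)$. First one checks admissibility: since $\chi$ has compact support in $(\delta,\infty)$, $\chi(u)$ vanishes on $\{u\le\delta'\}$ for some $\delta'>\delta$, hence in a neighbourhood of $\partial L(\delta,\infty)=l(\delta)$ (where $u$ approaches $\delta$). Thus the function equal to $u^{1/2}\chi(u)$ on $L(\delta,\infty)$ and to $0$ elsewhere is Lipschitz, and it has compact support because $u$ has compact support on the end $E$ while $f$ is proper on $F$. Plugging it into $\lambda_1(M)\int_M\varphi^2\le\int_M|\nabla\varphi|^2$ and expanding the gradient gives
\begin{equation*}
\lambda_1(M)\int_{L(\delta,\infty)}u\chi^2(u)\le\frac14\int_{L(\delta,\infty)}u^{-1}\chi^2(u)|\nabla u|^2+\int_{L(\delta,\infty)}\chi(u)\chi'(u)|\nabla u|^2+\int_{L(\delta,\infty)}u(\chi'(u))^2|\nabla u|^2 .
\end{equation*}

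The cross term is disposed of by integrating by parts. Since $\chi(u)$ vanishes near $\partial L(\delta,\infty)$ there is no boundary term, so $\int_{L(\delta,\infty)}\chi(u)\chi'(u)|\nabla u|^2=\tfrac12\int_{L(\delta,\infty)}\langle\nabla\chi^2(u),\nabla u\rangle=-\tfrac12\int_{L(\delta,\infty)}\chi^2(u)\,\Delta u$. By (\ref{a6}) and the fact that $\psi\equiv1$ on $E(R)$ we have $\Delta u=0$ on $F\cup E(R)$, while $|\Delta u|\le C/R$ on $E\setminus E(R)=E\setminus D(R)$ by (\ref{a5}). As $\chi(u)\ne0$ forces $u>\delta$, the volume bound (\ref{V}) yields $\big|\int_{L(\delta,\infty)}\chi^2(u)\Delta u\big|\le\frac{C}{R}\,\mathrm{Vol}\big((E\setminus D(R))\cap L(\delta,\infty)\big)\le\frac{C}{R\delta^2}e^{-R/C}\le\frac{C}{\sqrt R\,\delta^3}$.

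For the two remaining terms one uses the co-area formula on $L(\delta,\infty)$. For a regular value $t>\delta$ one has $\{u=t\}\cap L(\delta,\infty)=l(t)\cup\widetilde l(t)$, so for any integrable $G$ supported in $(\delta,\infty)$,
\begin{equation*}
\int_{L(\delta,\infty)}G(u)|\nabla u|^2=\int_\delta^\infty G(t)\Big(\int_{l(t)}|\nabla u|+\int_{\widetilde l(t)}|\nabla u|\Big)\,dt .
\end{equation*}
On $l(t)$, the normalization (\ref{a3}) together with $\Delta u=0$ on $F\cup E(R)$ and (\ref{V}) give, exactly as in (\ref{c17}) and Lemma~\ref{I'}, that $\int_{l(t)}|\nabla u|=1+e(t)$ with $|e(t)|\le C/(\sqrt R\,\delta^2)$. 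On $\widetilde l(t)$, since $u=t$ there, (\ref{a5}) gives $|\nabla u|\le C(t+1/R)$, hence $\int_{\widetilde l(t)}|\nabla u|\le C(t+1/R)\,\mathrm{Area}(\widetilde l(t))$, and (\ref{A}) yields $\int_\delta^\infty(t+1/R)\,\mathrm{Area}(\widetilde l(t))\,dt\le C/\sqrt{R\delta^3}$. Taking $G(t)=t^{-1}\chi^2(t)$ and using $\sup_{\operatorname{supp}\chi}t^{-1}\chi^2\le\delta^{-1}$ gives $\int_{L(\delta,\infty)}u^{-1}\chi^2(u)|\nabla u|^2\le\int_\delta^\infty t^{-1}\chi^2(t)\,dt+\frac{C}{\sqrt R\,\delta^3}$, while $G(t)=t(\chi'(t))^2$ gives $\int_{L(\delta,\infty)}u(\chi'(u))^2|\nabla u|^2\le\big(1+\frac{C}{\sqrt R\,\delta^2}\big)\int_\delta^\infty t(\chi'(t))^2\,dt+\frac{C}{\sqrt R\,\delta^3}$ (here the factor $1+C/(\sqrt R\,\delta^2)$ absorbs the contribution of $e(t)$, and the $\widetilde l(t)$ contribution is $\le C/\sqrt{R\delta^3}\le C/(\sqrt R\,\delta^3)$ by (\ref{A})). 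Substituting these two estimates and the cross-term estimate into the first displayed inequality and collecting terms proves the lemma.

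The step demanding the most care is the bookkeeping of the two kinds of error — the deviation $e(t)$ of the flux of $u$ through $l(t)$ from its limiting value $1$, and the extra level-set pieces $\widetilde l(t)$ lying off $L(t,\infty)$ — so that after integration against $t^{-1}\chi^2(t)$ and against $t(\chi'(t))^2$ they collapse into precisely the stated shapes $C/(\sqrt R\,\delta^2)$ and $C/(\sqrt R\,\delta^3)$; as in the proof of Theorem~\ref{A1}, this is where the volume decay (\ref{V}) and the area bound (\ref{A}) do the work. One must also check that the cut-off test function $u^{1/2}\chi(u)$ on $L(\delta,\infty)$ is genuinely Lipschitz with compact support, which is exactly where the compact support of $\chi$ inside $(\delta,\infty)$ and the structure of $u$ on $E$ and $F$ are used.
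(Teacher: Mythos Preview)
Your proof is correct and follows essentially the same approach as the paper: plug $u^{1/2}\chi(u)$ into the Poincar\'e inequality, expand the gradient, integrate the cross term by parts using $|\Delta u|\le C/R$ together with the volume bound (\ref{V}), and control the remaining two terms via the co-area formula and the flux/area estimates (the paper packages the latter step as a direct citation of Lemma~\ref{I'} rather than inlining it as you do). Your added justification for the admissibility of the test function is a welcome detail the paper leaves implicit.
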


\begin{proof}
Since $\chi (u)$ has compact support in $L\left( \delta ,\infty \right),$ 
\begin{eqnarray*}
\lambda _{1}\left( M\right) \int_{L\left( \delta ,\infty \right)}\left( u^{%
\frac{1}{2}}\chi \right) ^{2} &\leq &\int_{L\left( \delta ,\infty
\right)}\left\vert \nabla \left( u^{\frac{1}{2}}\chi \right) \right\vert ^{2}
\\
&=&\frac{1}{4}\int_{L\left( \delta ,\infty \right)}u^{-1}\left\vert \nabla
u\right\vert ^{2}\chi ^{2}+\frac{1}{2}\int_{L\left( \delta ,\infty
\right)}\left\langle \nabla u,\nabla \chi ^{2}\right\rangle \\
&&+\int_{L\left( \delta ,\infty \right)}u\left\vert \nabla \chi \right\vert
^{2}.
\end{eqnarray*}%
By Lemma \ref{I'}, 
\begin{equation*}
\int_{L\left( \delta ,\infty \right)}u^{-1}\left\vert \nabla u\right\vert
^{2}\chi ^{2} \leq \int_{\delta }^{\infty }t^{-1}\chi ^{2}\left( t\right) dt+%
\frac{C}{\sqrt{R}\delta ^{3}}
\end{equation*}
and 
\begin{equation*}
\int_{L\left( \delta ,\infty \right)}u\left\vert \nabla \chi \right\vert
^{2} \leq \left( 1+\frac{C}{\sqrt{R}\delta ^{2}}\right) \int_{\delta
}^{\infty }t\left( \chi ^{\prime }\right) ^{2}\left( t\right) dt.
\end{equation*}%
Also, by (\ref{a5}) and (\ref{V}), 
\begin{equation*}
\int_{L\left( \delta ,\infty \right)}\left\langle \nabla u,\nabla \chi
^{2}\right\rangle =-\int_{L\left( \delta ,\infty \right)}\chi ^{2}\Delta
u\leq \frac{C}{R\delta ^{2}}.
\end{equation*}%
The desired result follows by combining these estimates.
\end{proof}

We are now ready to prove Theorem \ref{A2}. By scaling, we may assume $K=1.$

\begin{theorem}
Let $\left( M,g\right) $ be a complete three-dimensional manifold with
scalar curvature bounded by $S\geq -6$ and bottom spectrum $\lambda
_{1}\left( M\right) =1.$ Assume that $H_{2}\left( M,\mathbb{Z}\right) $ has
no spherical classes and that the Ricci curvature of $M$ is bounded from
below by a constant. Then either all the ends of $M$ have infinite volume or 
$M=\mathbb{R}\times \mathbb{T}^{2}$ equipped with the warped product metric 
\begin{equation*}
ds_{M}^{2}=dt^{2}+e^{2t}ds_{\mathbb{T}^{2}}^{2}.
\end{equation*}
\end{theorem}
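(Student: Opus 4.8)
The plan is to rerun the proof of Theorem \ref{A1}, now with the globally defined proper harmonic function $f$ of the section preamble in place of the barrier function and with all constants tracked sharply, exploiting that $\lambda_1(M)=1$ is exactly the borderline value. Suppose, contrary to the conclusion, that some end of $M$ has finite volume. As explained in the preamble we may take this end to be $F=\{f\ge t_0\}$, connected, with $u=f$ on $F$ and $u=\psi f$ on $E=M\setminus F$, and we have at our disposal Lemmas \ref{Level}, \ref{I'}, \ref{Int_u}, the area bound \eqref{A} and the volume bound \eqref{V}. The first observation is a sharpening of Lemma \ref{Level}: since $L(t,\infty)\supset F$ is \emph{unbounded}, a spherical component of $l(t)\cap\overline{M_t}$ would have to bound a bounded component of $M\setminus l(t)$ containing $F$, which is impossible because $H_2(M,\mathbb Z)$ has no spherical classes. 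Hence no component of $l(t)\cap\overline{M_t}$ is a sphere, and by Gauss--Bonnet every such component has nonpositive Euler characteristic, so
\[
\int_{l(t)\cap\overline{M_t}}S_t\;=\;4\pi\sum_k\chi\big(l_k(t)\big)\;\le\;0
\]
for every regular value $t>\delta$.

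Next I would reproduce, line by line, the Bochner computation leading to Claim~1 in the proof of Theorem \ref{A1}, with $K=1$, with the interior boundary $\partial D$ now absent, and with the Gauss--Bonnet contribution carrying the favorable sign just established. As test function take $\chi=\chi(u)$, a product of a cut-off near $u=\delta$ and a cut-off near $u=T$ (the upper one is now needed because $u=f\to\infty$ on $F$), with compact support in $(\delta,\infty)$. Invoking Lemma \ref{RicS}, the Kato inequality \eqref{Kato}, $S\ge -6$, and the estimates \eqref{A}, \eqref{V} and Lemma \ref{I'}, and then letting $R\to\infty$ (so that $\psi\equiv1$ on larger and larger sets and $\int_{\{f=t\}}|\nabla f|=1$ becomes exact), I expect to reach the clean pair of inequalities
\[
\int_M|\nabla f|^{3}f^{-2}\chi^{2}\;\le\;8\int_M f\,\chi^{2},\qquad
\int_M f\,\chi^{2}\;\le\;\tfrac14\int_{\delta}^{\infty}t^{-1}\chi^{2}\,dt+\int_{\delta}^{\infty}t\,(\chi')^{2}\,dt,
\]
the second being Lemma \ref{Int_u} with $\lambda_1=1$, while $\int_{\delta}^{\infty}t^{-1}\chi^{2}\,dt=\int_M|\nabla f|^{2}f^{-1}\chi^{2}$ by the co-area formula and the normalization. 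Since $\int_M|\nabla f|^{2}f^{-1}\chi^{2}\le\big(\int_M|\nabla f|^{3}f^{-2}\chi^{2}\big)^{2/3}\big(\int_M f\,\chi^{2}\big)^{1/3}\le 4\int_M f\,\chi^{2}$, the two displays must be \emph{asymptotically equalities} once $\chi$ is chosen so that the cut-off costs are of lower order than $\int_M f\,\chi^{2}$, which blows up as $\delta\to0$ and $T\to\infty$.

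The rigidity is extracted from the equality cases of the inequalities that were used. Because $\lambda_1(M)=1$ is the borderline, each of the following must hold: $S\equiv-6$ (equality in $S\ge-6$); $\chi(l(t))=0$, so the level sets are tori (equality in Gauss--Bonnet); the Kato inequality \eqref{Kato} is an equality, which for the harmonic $f$ forces the umbilic Hessian $\nabla^{2}f=\lambda\big(e_1\otimes e_1-\tfrac12\,g_t\big)$ with $e_1=\nabla f/|\nabla f|$ and $g_t$ the induced metric on $\{f=t\}$; and the elementary inequality $ab\le\frac{a^{3}}{3}+\frac{2b^{3/2}}{3}$ used at the end of Claim~1 is an equality, which gives $|\nabla f|=2f$. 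The foliation of $M$ by these umbilic level tori then yields a warped-product splitting $M=\mathbb R\times N$, $g=ds^{2}+\varphi(s)^{2}g_N$, with $f=f(s)$; here $f'=2f$ and the umbilicity relation reads $\varphi'/\varphi=-f''/(2f')=-1$, so $\varphi=\mathrm{const}\cdot e^{-s}$, while $S\equiv-6$ together with $\int_N K_N=2\pi\chi(N)=0$ forces $(N,g_N)$ to be flat, hence $N=\mathbb T^{2}$. Setting $t=-s$ and renormalizing $g_N$ gives $ds_M^{2}=dt^{2}+e^{2t}ds_{\mathbb T^{2}}^{2}$. (One can also see this from the equation $2(\varphi'/\varphi)'+3(\varphi'/\varphi)^{2}=3$ implied by $S\equiv-6$ with flat fiber: its complete solutions on $\mathbb R$ are $\varphi=\mathrm{const}\cdot e^{\pm s}$ and $\varphi=\mathrm{const}\cdot\cosh^{2/3}\!\big(\tfrac32(s-s_0)\big)$, cf.\ Example \ref{E2}, and only the first type has a finite volume end --- which is precisely the hypothesis that makes the proper function $f$ available in the first place.)

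The main obstacle, as flagged in the introduction, is the step where one must "balance the boundary terms at the infinity of the two ends." Removing the cut-off $\chi$ means letting $\delta\to0$ along $E$ and $T\to\infty$ along $F$, and both the Bochner/Claim~1 estimate and the Poincar\'e estimate of Lemma \ref{Int_u} then produce flux terms on $\{f=\delta\}$ and on $\{f=T\}$ that do not individually tend to zero; the equality $\lambda_1(M)=1$ survives only because these fluxes cancel. Making this cancellation quantitative --- choosing the profiles $\chi$ so that the surviving boundary terms match, and then reading off from the residual defect that equality must hold \emph{pointwise} in $S\ge-6$, in Kato, in Gauss--Bonnet, and in the elementary inequality --- is the technical heart of the argument. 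Once that pointwise rigidity is in hand, the warped-product identification above is routine bookkeeping.
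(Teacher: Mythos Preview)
Your overall strategy matches the paper's: rerun the Bochner/level-set argument of Theorem \ref{A1} with the global proper harmonic $f$, exploit the sharpened Gauss--Bonnet sign \eqref{S}, and read off the warped product from the equality cases. Your endgame identification of the metric is fine. But there are two concrete gaps.

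First, your chain ``$A:=\int|\nabla f|^{3}f^{-2}\chi^{2}\le 8B$, $B:=\int f\chi^{2}\le\tfrac14 C+D$, $C:=\int|\nabla f|^{2}f^{-1}\chi^{2}\le A^{2/3}B^{1/3}\le 4B$'' is circular: it collapses to $B\le B+D$, which forces nothing. The paper does \emph{not} use H\"older here. It carries the nonnegative defect
\[
\Gamma=\tfrac12\!\int\!\big(|f_{ij}|^{2}-\tfrac32|\nabla|\nabla f||^{2}\big)\phi^{3}+\tfrac12\!\int\!\phi^{3}\!\int_{l(t)}(S+6)
\]
in the first inequality, so that (a32) reads $\Gamma+A\le 8B+o(1)$, and then combines this with the \emph{pointwise} inequality $2u^{-1}|\nabla u|^{2}\le\tfrac23 u^{-2}|\nabla u|^{3}+\tfrac83 u$ (equation (a34)) and with Lemma \ref{Int_u} to obtain $\Gamma\le C/\ln(\varepsilon^{-1})$, whence $\Gamma=0$ and the rigidity follows.

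Second---and this is the substantive point---the ``clean'' inequality $A+\Gamma\le 8B$ is \emph{not} what falls out of the Bochner computation. After integrating \eqref{a13} by parts, the left side of \eqref{a19}/\eqref{a27} is the boundary term $-\int\langle\nabla|\nabla u|,\nabla\phi^{3}\rangle$, and for the logarithmic profile this splits into two pieces, one on $L(\varepsilon^{2},\varepsilon)$ and one on $L(\varepsilon^{-1},\varepsilon^{-2})$, each of size $O(1)$ (not $o(1)$). What the paper does is first run the whole argument once on each transition region with auxiliary cutoffs $\chi,\eta$ to pin down two-sided asymptotics
\[
\int_{L(\varepsilon^{2},\varepsilon)}u^{-2}|\nabla u|^{3}\chi^{2}=\tfrac23\ln(\varepsilon^{-1})+O(1),\qquad
\int_{L(\varepsilon^{-1},\varepsilon^{-2})}u^{-2}|\nabla u|^{3}\eta^{2}=\tfrac23\ln(\varepsilon^{-1})+O(1),
\]
(equations (I1)--(I2)); these are obtained by sandwiching with Lemma \ref{I'} and Lemma \ref{Int_u} on each end separately. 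Only then, in \eqref{a30}, do the two boundary contributions---which enter with opposite signs---cancel to give \eqref{a28}, i.e.\ $-\int\langle\nabla|\nabla u|,\nabla\phi^{3}\rangle\le C/\ln(\varepsilon^{-1})$. You correctly flag this balancing as the ``technical heart,'' but you do not supply the mechanism: it is precisely these preliminary two-sided estimates on the transition annuli, not a single cutoff $\chi$, that make the cancellation quantitative.
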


\begin{proof}
We apply the previous discussions to $\delta =\varepsilon ^{2}$ for any
given $\varepsilon >0.$ In the following, $R$ is arbitrarily large and
satisfies 
\begin{equation}
R>\frac{1}{\varepsilon ^{20}}.  \label{R}
\end{equation}%
Denote with 
\begin{equation}
\Omega \left( R\right) =F\cup E\left( R\right) .  \label{omega}
\end{equation}

Let $\phi =\phi \left( t\right) $ be a Lipschitz function such that $0\leq
\phi \leq 1$ and 
\begin{eqnarray}
\text{\textrm{supp}}\left( \phi \right) &\subset &\left( \varepsilon ^{2},%
\frac{1}{\varepsilon ^{2}}\right)  \label{a12} \\
\left\vert \phi ^{\prime }\right\vert &\leq &\frac{C}{\varepsilon ^{2}}. 
\notag
\end{eqnarray}%
Define $\phi \left( x\right) =\phi \left( u\left( x\right) \right) $ on $%
L\left( \varepsilon^{2},\varepsilon ^{-2}\right).$ From here on, we will
denote by $C$ a constant independent of $\varepsilon $ and $R$, whose value
may change from line to line.

Again, starting from the Bochner formula, we have 
\begin{eqnarray}
\int_{M}\left( \Delta \left\vert \nabla u\right\vert \right) \phi ^{2} &\geq
&\int_{M}\left( \left\vert u_{ij}\right\vert ^{2}-\left\vert \nabla
\left\vert \nabla u\right\vert \right\vert ^{2}+\mathrm{Ric}\left( \nabla
u,\nabla u\right) \right) \left\vert \nabla u\right\vert ^{-1}\phi ^{2}
\label{a13} \\
&&-\int_{M}\left\vert \nabla \left( \Delta u\right) \right\vert \phi ^{2}. 
\notag
\end{eqnarray}%
Using (\ref{c21}) that 
\begin{equation}
\int_{M}\left\vert \nabla \left( \Delta u\right) \right\vert \phi ^{2}\leq 
\frac{C}{R\varepsilon ^{4}}\leq C\varepsilon   \label{a14}
\end{equation}%
and that 
\begin{equation}
\int_{l\left( t\right) \cap \overline{M_{t}}}S_{t}\leq 0  \label{S}
\end{equation}%
on each regular level $l\left( t\right) $ by Lemma \ref{Level} and the
Gauss-Bonnet theorem, after manipulating as in the proof of Theorem \ref{A1}%
, we arrive at%
\begin{eqnarray}
&&\int_{M}\left( \left\vert u_{ij}\right\vert ^{2}-\left\vert \nabla
\left\vert \nabla u\right\vert \right\vert ^{2}+\mathrm{Ric}\left( \nabla
u,\nabla u\right) \right) \left\vert \nabla u\right\vert ^{-1}\phi ^{2}
\label{a18} \\
&\geq &\frac{3}{4}\int_{\varepsilon ^{2}}^{\varepsilon ^{-2}}\phi ^{2}\left(
t\right) \int_{l\left( t\right) \cap \Omega \left( R\right) }\left\vert
\nabla \left\vert \nabla u\right\vert \right\vert ^{2}\left\vert \nabla
u\right\vert ^{-2}dt-3\int_{M}\left\vert \nabla u\right\vert \phi ^{2} 
\notag \\
&&+\frac{1}{2}\int_{\varepsilon ^{2}}^{\varepsilon ^{-2}}\phi ^{2}\left(
t\right) \int_{\widetilde{l}\left( t\right) \cap \Omega \left( R\right)
}\left\vert \nabla \left\vert \nabla u\right\vert \right\vert ^{2}\left\vert
\nabla u\right\vert ^{-2}dt-C\varepsilon +\Gamma ,  \notag
\end{eqnarray}%
where 
\begin{equation}
\Gamma =\frac{1}{2}\int_{\Omega (R)}\left( \left\vert f_{ij}\right\vert ^{2}-%
\frac{3}{2}\left\vert \nabla \left\vert \nabla f\right\vert \right\vert
^{2}\right) \phi ^{2}+\frac{1}{2}\int_{\varepsilon ^{2}}^{\varepsilon
^{-2}}\phi ^{2}(t)\int_{l(t)\cap \Omega (R)}(S+6)dt  \label{Gamma'}
\end{equation}%
collects the relevant nonnegative terms discarded there.

Plugging (\ref{a18}) into (\ref{a13}) yields%
\begin{eqnarray}
&&-\int_{M}\left\langle \nabla \left\vert \nabla u\right\vert ,\nabla \phi
^{2}\right\rangle   \label{a19} \\
&\geq &\frac{3}{4}\int_{\varepsilon ^{2}}^{\varepsilon ^{-2}}\phi ^{2}\left(
t\right) \int_{l\left( t\right) \cap \Omega \left( R\right) }\left\vert
\nabla \left\vert \nabla u\right\vert \right\vert ^{2}\left\vert \nabla
u\right\vert ^{-2}dt  \notag \\
&&+\frac{1}{2}\int_{\varepsilon ^{2}}^{\varepsilon ^{-2}}\phi ^{2}\left(
t\right) \int_{\widetilde{l}\left( t\right) \cap \Omega \left( R\right)
}\left\vert \nabla \left\vert \nabla u\right\vert \right\vert ^{2}\left\vert
\nabla u\right\vert ^{-2}dt  \notag \\
&&-3\int_{M}\left\vert \nabla u\right\vert \phi ^{2}-C\varepsilon +\Gamma . 
\notag
\end{eqnarray}

We will make use of this inequality with different choices of $\phi $ below.
First, let $\phi =\chi \left( u\right) $ be a cut-off supported in $E,$
where 
\begin{equation*}
\chi \left( t\right) =\left\{ 
\begin{array}{c}
\frac{\ln t-2\ln \varepsilon }{\ln \left( \varepsilon ^{-1}\right) } \\ 
\frac{\ln \left( 2\varepsilon \right) -\ln t}{\ln 2}%
\end{array}%
\right. 
\begin{array}{c}
\text{for} \\ 
\text{for}%
\end{array}%
\begin{array}{c}
\varepsilon ^{2}\leq t\leq \varepsilon  \\ 
\varepsilon \leq t\leq 2\varepsilon .%
\end{array}%
\end{equation*}%
It follows that%
\begin{eqnarray}
&&\frac{3}{4}\int_{\varepsilon ^{2}}^{2\varepsilon }\chi ^{2}\left( t\right)
\int_{l\left( t\right) \cap E\left( R\right) }\left\vert \nabla \left\vert
\nabla u\right\vert \right\vert ^{2}\left\vert \nabla u\right\vert ^{-2}dt
\label{a20} \\
&&+\frac{1}{2}\int_{\varepsilon ^{2}}^{2\varepsilon }\chi ^{2}\left(
t\right) \int_{\widetilde{l}\left( t\right) \cap E\left( R\right)
}\left\vert \nabla \left\vert \nabla u\right\vert \right\vert ^{2}\left\vert
\nabla u\right\vert ^{-2}dt  \notag \\
&\leq &3\int_{M}\left\vert \nabla u\right\vert \chi
^{2}-\int_{M}\left\langle \nabla \left\vert \nabla u\right\vert ,\nabla \chi
^{2}\right\rangle +C\varepsilon .  \notag
\end{eqnarray}%
Following (\ref{c30}) to (\ref{gu}) we obtain%
\begin{equation*}
\int_{M}\left\vert \nabla u\right\vert ^{3}u^{-2}\chi ^{2}\leq
8\int_{M}u\chi ^{2}+C.
\end{equation*}%
By Lemma \ref{Int_u} we have 
\begin{eqnarray}
\int_{M}u\chi ^{2} &\leq &\frac{1}{4}\int_{\varepsilon ^{2}}^{2\varepsilon
}t^{-1}\chi ^{2}\left( t\right) dt+C\int_{\varepsilon ^{2}}^{2\varepsilon
}t\left( \chi ^{\prime }\right) ^{2}\left( t\right) dt+C  \label{chi} \\
&\leq &\frac{1}{12}\ln \left( \varepsilon ^{-1}\right) +C.  \notag
\end{eqnarray}%
Hence, this proves 
\begin{equation}
\int_{L\left( \varepsilon ^{2},\varepsilon \right) }\left\vert \nabla
u\right\vert ^{3}u^{-2}\chi ^{2}\leq \frac{2}{3}\ln \left( \varepsilon
^{-1}\right) +C.  \label{a24}
\end{equation}%
Conversely, applying $ab\leq \frac{a^{3}}{3}+\frac{2b^{\frac{3}{2}}}{3}$ for 
$a=2u^{\frac{1}{3}}$ and $b=u^{-\frac{4}{3}}\left\vert \nabla u\right\vert
^{2}$ yields 
\begin{equation}
2\int_{M}u^{-1}\left\vert \nabla u\right\vert ^{2}\chi ^{2}\leq \frac{8}{3}%
\int_{M}u\chi ^{2}+\frac{2}{3}\int_{M}u^{-2}\left\vert \nabla u\right\vert
^{3}\chi ^{2}.  \label{a25}
\end{equation}
Lemma \ref{I'} implies that 
\begin{equation*}
2\int_{M}u^{-1}\left\vert \nabla u\right\vert ^{2}\chi ^{2}\geq \frac{2}{3}%
\ln \left( \varepsilon ^{-1}\right) -C.
\end{equation*}%
Plugging this and (\ref{chi}) into (\ref{a25}) implies 
\begin{equation*}
\int_{M}u^{-2}\left\vert \nabla u\right\vert ^{3}\chi ^{2}\geq \frac{2}{3}%
\ln \left( \varepsilon ^{-1}\right) -C.
\end{equation*}%
Since by Lemma \ref{I'} and (\ref{a5}) 
\begin{equation*}
\int_{L\left( \varepsilon ,2\varepsilon \right) }u^{-2}\left\vert \nabla
u\right\vert ^{3}\chi ^{2}\leq C\int_{L\left( \varepsilon ,2\varepsilon
\right) }u^{-1}\left\vert \nabla u\right\vert ^{2}\leq C,
\end{equation*}%
we conclude 
\begin{equation*}
\int_{L\left( \varepsilon ^{2},\varepsilon \right) }u^{-2}\left\vert \nabla
u\right\vert ^{3}\chi ^{2}\geq \frac{2}{3}\ln \left( \varepsilon
^{-1}\right) -C.
\end{equation*}%
Together with (\ref{a24}), this proves 
\begin{equation}
\left\vert \int_{L\left( \varepsilon ^{2},\varepsilon \right)
}u^{-2}\left\vert \nabla u\right\vert ^{3}\chi ^{2}-\frac{2}{3}\ln \left(
\varepsilon ^{-1}\right) \right\vert \leq C.  \label{I1}
\end{equation}

A similar argument on the end $F$ implies 
\begin{equation}
\left\vert \int_{L\left( \varepsilon ^{-1},\varepsilon ^{-2}\right)
}u^{-2}\left\vert \nabla u\right\vert ^{3}\eta ^{2}-\frac{2}{3}\ln \left(
\varepsilon ^{-1}\right) \right\vert \leq C,  \label{I2}
\end{equation}%
where 
\begin{equation*}
\eta \left( t\right) =\left\{ 
\begin{array}{c}
\frac{\ln t-\ln \left( 2\varepsilon \right) ^{-1}}{\ln 2} \\ 
\frac{\ln \left( \varepsilon ^{-2}\right) -\ln t}{\ln \left( \varepsilon
^{-1}\right) }%
\end{array}%
\right. 
\begin{array}{c}
\text{for} \\ 
\text{for}%
\end{array}%
\begin{array}{c}
\frac{1}{2}\varepsilon ^{-1}\leq t\leq \varepsilon ^{-1} \\ 
\varepsilon ^{-1}\leq t\leq \varepsilon ^{-2}.%
\end{array}%
\end{equation*}%
We now return to (\ref{a19}) and replace $\phi $ by $\phi ^{\frac{3}{2}}$ to
obtain 
\begin{eqnarray}
&&-\int_{M}\left\langle \nabla \left\vert \nabla u\right\vert ,\nabla \phi
^{3}\right\rangle   \label{a27} \\
&\geq &\frac{3}{4}\int_{\varepsilon ^{2}}^{\varepsilon ^{-2}}\phi ^{3}\left(
t\right) \int_{l\left( t\right) \cap \Omega \left( R\right) }\left\vert
\nabla \left\vert \nabla u\right\vert \right\vert ^{2}\left\vert \nabla
u\right\vert ^{-2}dt  \notag \\
&&+\frac{1}{2}\int_{\varepsilon ^{2}}^{\varepsilon ^{-2}}\phi ^{3}\left(
t\right) \int_{\widetilde{l}\left( t\right) \cap \Omega \left( R\right)
}\left\vert \nabla \left\vert \nabla u\right\vert \right\vert ^{2}\left\vert
\nabla u\right\vert ^{-2}dt  \notag \\
&&-3\int_{M}\left\vert \nabla u\right\vert \phi ^{3}-C\varepsilon +\Gamma . 
\notag
\end{eqnarray}%
Let us now set 
\begin{equation*}
\phi \left( t\right) =\left\{ 
\begin{array}{c}
\frac{\ln t-2\ln \varepsilon }{\ln \left( \varepsilon ^{-1}\right) } \\ 
1 \\ 
\frac{2\ln \left( \varepsilon ^{-1}\right) -\ln t}{\ln \left( \varepsilon
^{-1}\right) }%
\end{array}%
\right. 
\begin{array}{c}
\text{for} \\ 
\text{for} \\ 
\text{for}%
\end{array}%
\begin{array}{c}
\varepsilon ^{2}\leq t\leq \varepsilon  \\ 
\varepsilon \leq t\leq \varepsilon ^{-1} \\ 
\varepsilon ^{-1}\leq t\leq \varepsilon ^{-2}.%
\end{array}%
\end{equation*}%
We claim that 
\begin{equation}
-\int_{M}\left\langle \nabla \left\vert \nabla u\right\vert ,\nabla \phi
^{3}\right\rangle \leq \frac{C}{\ln \left( \varepsilon ^{-1}\right) }.
\label{a28}
\end{equation}%
Indeed, we have 
\begin{eqnarray}
-\int_{M}\left\langle \nabla \left\vert \nabla u\right\vert ,\nabla \phi
^{3}\right\rangle  &=&-\frac{3}{\ln \left( \varepsilon ^{-1}\right) }%
\int_{L\left( \varepsilon ^{2},\varepsilon \right) }\left\langle \nabla
\left\vert \nabla u\right\vert ,\nabla u\right\rangle u^{-1}\phi ^{2}
\label{a30} \\
&&+\frac{3}{\ln \left( \varepsilon ^{-1}\right) }\int_{L\left( \varepsilon
^{-1},\varepsilon ^{-2}\right) }\left\langle \nabla \left\vert \nabla
u\right\vert ,\nabla u\right\rangle u^{-1}\phi ^{2}.  \notag
\end{eqnarray}%
On the one hand, integration by parts gives 
\begin{eqnarray*}
-\int_{L\left( \varepsilon ^{2},\varepsilon \right) }\left\langle \nabla
\left\vert \nabla u\right\vert ,\nabla u\right\rangle u^{-1}\phi ^{2}
&=&\int_{L\left( \varepsilon ^{2},\varepsilon \right) }u^{-1}\left\vert
\nabla u\right\vert \left( \Delta u\right) \phi ^{2}-\int_{L\left(
\varepsilon ^{2},\varepsilon \right) }u^{-2}\left\vert \nabla u\right\vert
^{3}\phi ^{2} \\
&&+\int_{L\left( \varepsilon ^{2},\varepsilon \right) }\left\vert \nabla
u\right\vert \left\langle \nabla u,\nabla \phi ^{2}\right\rangle
u^{-1}-\int_{l\left( \varepsilon \right) }\left\vert \nabla u\right\vert
^{2}u^{-1} \\
&\leq &-\frac{2}{3}\ln \left( \varepsilon ^{-1}\right) +C,
\end{eqnarray*}%
where in the last line we have used (\ref{I1}) and Lemma \ref{I'}. On the
other hand, integration by parts as above together with (\ref{I2}) implies 
\begin{equation*}
\int_{L\left( \varepsilon ^{-1},\varepsilon ^{-2}\right) }\left\langle
\nabla \left\vert \nabla u\right\vert ,\nabla u\right\rangle u^{-1}\phi
^{2}\leq \frac{2}{3}\ln \left( \varepsilon ^{-1}\right) +C.
\end{equation*}%
Plugging these estimates into (\ref{a30}) we obtain (\ref{a28}). Hence, by (%
\ref{a27}), we have established that 
\begin{eqnarray}
&&\Gamma +\frac{3}{4}\int_{\varepsilon ^{2}}^{\varepsilon ^{-2}}\phi
^{3}\left( t\right) \int_{l\left( t\right) \cap \Omega \left( R\right)
}\left\vert \nabla \left\vert \nabla u\right\vert \right\vert ^{2}\left\vert
\nabla u\right\vert ^{-2}dt  \label{a31} \\
&&+\frac{1}{2}\int_{\varepsilon ^{2}}^{\varepsilon ^{-2}}\phi ^{3}\left(
t\right) \int_{\widetilde{l}\left( t\right) \cap \Omega \left( R\right)
}\left\vert \nabla \left\vert \nabla u\right\vert \right\vert ^{2}\left\vert
\nabla u\right\vert ^{-2}dt  \notag \\
&\leq &3\int_{M}\left\vert \nabla u\right\vert \phi ^{3}+\frac{C}{\ln \left(
\varepsilon ^{-1}\right) }.  \notag
\end{eqnarray}%
Proceeding as in the proof of (\ref{c31}) we note that 
\begin{eqnarray*}
\int_{M}\left\langle \nabla \left\vert \nabla u\right\vert ,\nabla
u\right\rangle u^{-1}\phi ^{3} &=&-\int_{M}\left\vert \nabla u\right\vert
u^{-1}\left( \Delta u\right) \phi ^{3}+\int_{M}\left\vert \nabla
u\right\vert ^{3}u^{-2}\phi ^{3} \\
&&-\int_{M}\left\vert \nabla u\right\vert u^{-1}\left\langle \nabla u,\nabla
\phi ^{3}\right\rangle .
\end{eqnarray*}%
However, (\ref{I1}) and (\ref{I2}) imply that 
\begin{equation*}
\left\vert \int_{M}\left\vert \nabla u\right\vert u^{-1}\left\langle \nabla
u,\nabla \phi ^{3}\right\rangle \right\vert \leq \frac{C}{\ln \left(
\varepsilon ^{-1}\right) }.
\end{equation*}%
Also, 
\begin{equation*}
\left\vert -\int_{M}\left\vert \nabla u\right\vert u^{-1}\left( \Delta
u\right) \phi ^{3}\right\vert \leq C\,\varepsilon 
\end{equation*}%
by (\ref{a5}) and (\ref{V}). Therefore, 
\begin{equation*}
\int_{M}\left\vert \nabla u\right\vert ^{3}u^{-2}\phi ^{3}\leq \int_{\Omega
\left( R\right) }\left\langle \nabla \left\vert \nabla u\right\vert ,\nabla
u\right\rangle u^{-1}\phi ^{3}+\frac{C}{\ln \left( \varepsilon ^{-1}\right) }%
.
\end{equation*}%
Now arguing as from (\ref{c31}) to (\ref{gu}), we conclude from (\ref{a31})
and the above inequality that 
\begin{equation}
\,\Gamma +\int_{M}\left\vert \nabla u\right\vert ^{3}u^{-2}\phi ^{3}\leq
8\int_{M}u\phi ^{3}+\frac{C}{\ln \left( \varepsilon ^{-1}\right) }.
\label{a32}
\end{equation}%
On the other hand, as $\lambda _{1}(M)=1,$ Lemma \ref{Int_u} implies that 
\begin{equation}
\int_{M}u\phi ^{3}\leq \frac{1}{4}\int_{M}\left\vert \nabla u\right\vert
^{2}u^{-1}\phi ^{3}+\frac{C}{\ln \left( \varepsilon ^{-1}\right) }.
\label{a33}
\end{equation}%
However, as in (\ref{a25}),%
\begin{equation}
2u^{-1}\left\vert \nabla u\right\vert ^{2}\leq \frac{2}{3}u^{-2}\left\vert
\nabla u\right\vert ^{3}+\frac{8}{3}u.  \label{a34}
\end{equation}%
Applying (\ref{a32}) and (\ref{a33}) it follows that  
\begin{eqnarray*}
0 &\leq &\int_{M}\left( \frac{2}{3}u^{-2}\left\vert \nabla u\right\vert ^{3}+%
\frac{8}{3}u-2u^{-1}\left\vert \nabla u\right\vert ^{2}\right) \phi ^{3} \\
&\leq &8\int_{M}u\phi ^{3}-2\int_{M}u^{-1}\left\vert \nabla u\right\vert
^{2}\phi ^{3}-\frac{2}{3}\Gamma +\frac{C}{\ln \left( \varepsilon
^{-1}\right) } \\
&\leq &-\frac{2}{3}\Gamma +\frac{C}{\ln \left( \varepsilon ^{-1}\right) }.
\end{eqnarray*}%
In conclusion, this proves that 
\begin{eqnarray}
&&\int_{L\left( \varepsilon ,\varepsilon ^{-1}\right) \cap \Omega (R)}\left( 
\frac{2}{3}f^{-2}\left\vert \nabla f\right\vert ^{3}+\frac{8}{3}%
f-2f^{-1}\left\vert \nabla f\right\vert ^{2}\right)   \label{a35} \\
&&+\frac{1}{3}\int_{L\left( \varepsilon ,\varepsilon ^{-1}\right) \cap
\Omega (R)}\left( \left\vert f_{ij}\right\vert ^{2}-\frac{3}{2}\left\vert
\nabla \left\vert \nabla f\right\vert \right\vert ^{2}\right)   \notag \\
&&+\frac{1}{3}\int_{\varepsilon }^{\varepsilon ^{-1}}\int_{l(t)\cap \Omega
(R)}(S+6)dt  \notag \\
&\leq &\frac{C}{\ln \left( \varepsilon ^{-1}\right) }.  \notag
\end{eqnarray}

Since $R$ and $\varepsilon $ are arbitrary, we conclude from here that $%
\left\vert f_{ij}\right\vert ^{2}=\frac{3}{2}\left\vert \nabla \left\vert
\nabla f\right\vert \right\vert ^{2}$ and $\left\vert \nabla f\right\vert =2f
$.  As in \cite{LW}, this proves that $\left( M,g\right) $ splits as a
warped product $M=\mathbb{R}\times N$ with $ds_{M}^{2}=dt^{2}+e^{2h\left(
t\right) }ds_{N}^{2}$, and $f\left( t\right) =Ce^{-2t}$ if we choose 
$F=\left( -\infty ,0\right) \times N$ to be the parabolic end. Since $f$ is harmonic, it follows $%
h\left( t\right) =t$. Moroever, as $S=-6$ by (\ref{a35}), it follows that $N$
is flat. The theorem is proved.
\end{proof}

\section{Splitting for nonnegative scalar curvature\label{Sge0}}

In this section we study complete three-dimensional manifolds of
non-negative scalar curvature with more than one end. We first prove Theorem %
\ref{A3}.

\begin{theorem}
Let $\left( M,g\right) $ be a complete three-dimensional parabolic manifold
with its scalar curvature $S\geq 0$ and Ricci curvature bounded below by a
constant. Assume that $H_{2}\left( M,\mathbb{Z}\right) $ contains no
spherical classes. Then either $M$ is connected at infinity or it splits
isometrically as a direct product $M=\mathbb{R}\times \mathbb{T}^2.$
\end{theorem}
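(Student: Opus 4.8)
The plan is to prove the contrapositive: assuming $M$ is not connected at infinity, I will produce the splitting $M=\mathbb{R}\times\mathbb{T}^{2}$. So suppose $M$ has at least two ends. Since $M$ is parabolic we have $\lambda_{1}(M)=0$ and every end of $M$ is parabolic, so by the Li--Tam/Nakai theory there is a nonconstant \emph{proper} harmonic function $f$ on $M$. By the maximum principle $f$ has no interior extremum, so properness forces $f\to+\infty$ on a nonempty family of ends and $f\to-\infty$ on the complementary family; in particular every level set $\{f=t\}$ is compact, and since $\Delta f=0$ the flux $\int_{\{f=t\}}|\nabla f|$ is independent of the regular value $t$, which we normalize to $1$. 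Because $H_{2}(M,\mathbb{Z})$ contains no spherical classes, the argument of Lemma~\ref{ls} applies: a spherical component of $\{f=t\}$ would bound a relatively compact component $U$ of its complement, but then $f$ would be harmonic on $U$ with constant boundary data, hence constant on $M$ --- a contradiction. Thus no component of a regular level set is a $2$-sphere, and as in Corollary~\ref{Int} and (\ref{S}), Gauss--Bonnet gives
\[
\int_{\{f=t\}}S_{t}=4\pi\,\chi(\{f=t\})\le 0
\]
for every regular value $t$, where $S_{t}$ denotes the scalar curvature of $\{f=t\}$.

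Next I would record the Bochner identity for the harmonic $f$. Combining the Bochner formula with Lemma~\ref{RicS} (taking $\Delta f\equiv0$) gives, on $\{|\nabla f|\ne0\}$,
\[
\Delta|\nabla f|=\tfrac12\,\frac{|\nabla^{2}f|^{2}}{|\nabla f|}+\tfrac12\,(S-S_{t})\,|\nabla f|.
\]
Using $S\ge0$, the Kato inequality (\ref{Kato}) $|\nabla^{2}f|^{2}\ge\tfrac32|\nabla|\nabla f||^{2}$, and the level-set bound $\int_{\{f=t\}}S_{t}\le0$ from the first step, and then integrating this against a weight $\phi^{2}(f)$ over slabs while controlling the transition and tail terms by the Cheng--Yau estimate and the Ricci lower bound exactly as in the proofs of Theorems~\ref{A1} and~\ref{A2}, one is led to the monotonicity formula for harmonic functions of \cite{MW}, refined in \cite{MW1}: there is a monotone function of the level parameter whose failure to be constant dominates the three nonnegative quantities
\[
\int_{\{f=t\}}\!\Big(|\nabla^{2}f|^{2}-\tfrac32|\nabla|\nabla f||^{2}\Big)|\nabla f|^{-1},\qquad \int_{\{f=t\}}\!S\,|\nabla f|,\qquad -\int_{\{f=t\}}\!S_{t}.
\]

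The rigidity step is where the real work lies, and I expect it to be the main obstacle. Because $M$ is parabolic one has $\int_{\{a<f<b\}}|\nabla f|^{2}=b-a$, so the energy grows only linearly; unlike in Theorem~\ref{A1} there is no spectral gap and hence no finite-energy cut-off to exploit, and it is precisely the monotonicity formula together with the $O\!\big(1/\ln(\varepsilon^{-1})\big)$ accounting of the transition-region errors, just as in the proof of Theorem~\ref{A2}, that forces the monotone quantity to be constant. One then concludes, in place of (\ref{a35}), that all three displayed quantities vanish identically: equality holds in the Kato inequality, $|\nabla^{2}f|^{2}=\tfrac32|\nabla|\nabla f||^{2}$; the scalar curvature satisfies $S\equiv0$; and $\chi(\{f=t\})=0$ for every regular value $t$.

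Finally I would extract the splitting. Equality in the Kato inequality for the harmonic $f$ means that, in the orthonormal frame with $e_{1}=\nabla f/|\nabla f|$, one has $\nabla^{2}f=\mu\,(e_{1}\otimes e_{1}-\tfrac12 e_{2}\otimes e_{2}-\tfrac12 e_{3}\otimes e_{3})$; the level sets of $f$ are therefore totally umbilic, and integrating the flow of $\nabla f/|\nabla f|$ as in Li--Wang \cite{LW} shows $(M,g)$ splits as a warped product $M=\mathbb{R}\times N$ with $ds_{M}^{2}=dt^{2}+\varphi(t)^{2}ds_{N}^{2}$ and $f=f(t)$, where $N$ is a level set; in particular $M$ has exactly two ends, and $N$ is compact and connected. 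Since $f$ is harmonic, $\varphi^{2}f'$ is constant. The condition $S\equiv0$ reads $\varphi^{-2}S_{N}-4\varphi''/\varphi-2(\varphi'/\varphi)^{2}=0$, which forces $S_{N}$ to be constant in the $N$-variable, and then $\chi(N)=0$ with Gauss--Bonnet gives $S_{N}\equiv0$, so $N$ is flat; the residual equation $2\varphi\varphi''+(\varphi')^{2}=0$ is $(\varphi^{3/2})''=0$, and positivity of $\varphi$ on all of $\mathbb{R}$ forces $\varphi$ constant. After rescaling, $M=\mathbb{R}\times N$ isometrically with $N$ a compact flat surface of Euler characteristic $0$, i.e. $N=\mathbb{T}^{2}$, as claimed. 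To summarize, the crux is the rigidity step --- establishing the monotonicity formula in the infinite-energy parabolic regime and squeezing out every equality case; a secondary difficulty is the topological input ruling out spherical level-set components, which combines the no-spherical-classes hypothesis with the maximum principle for the proper harmonic $f$.
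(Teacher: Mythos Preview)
Your overall strategy --- a proper harmonic function on the parabolic manifold, the Bochner identity combined with Lemma~\ref{RicS}, Gauss--Bonnet on the compact level sets, and Kato --- matches the paper's. The gap is in the rigidity step. You propose to close it ``just as in the proof of Theorem~\ref{A2}'' via the $O(1/\ln(\varepsilon^{-1}))$ accounting, but that argument hinges on Lemma~\ref{Int_u}, which is the Poincar\'e inequality with $\lambda_{1}(M)=1$; in the parabolic case $\lambda_{1}(M)=0$ and that step collapses. Invoking an unspecified ``monotone function of the level parameter'' does not fill this in.

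The paper's actual mechanism is more concrete and avoids any cut-off accounting. Set $w(t)=\int_{\{u=t\}}|\nabla u|^{2}$. Integrating the Bochner identity with weight $\phi(u)$ and using Lemma~\ref{RicS}, $S\ge0$, $\int_{l(t)}S_{t}\le0$, Kato, and Cauchy--Schwarz on $w'(r)=\int_{l(r)}\partial_\nu|\nabla u|$ gives
\[
\bigl(\phi w'-\phi' w\bigr)\Big|_{s}^{t}\ \ge\ -\int_{s}^{t}\phi'' w\,dr\ +\ \tfrac{3}{4}\int_{s}^{t}\phi\,\frac{(w')^{2}}{w}\,dr.
\]
Taking $\phi\equiv1$ shows $w'$ is nondecreasing, i.e.\ $w$ is convex. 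Taking $\phi(r)=r^{-\alpha}$ and combining with the Cheng--Yau bound $w(t)\le Ct$ yields $w(r)\le C\,r^{1/8}$ for $r\ge1$ (and similarly for $r\le -1$). A convex function with sublinear two-sided growth is constant, so $w'\equiv0$, and equality throughout the chain forces $\nabla^{2}u\equiv0$ and $S\equiv0$ directly. In particular you get $|\nabla u|$ constant and an isometric splitting $M=\mathbb{R}\times N$ with $N$ flat immediately --- your warped-product/ODE detour (Kato equality $\Rightarrow$ umbilic $\Rightarrow$ solve $(\varphi^{3/2})''=0$) is unnecessary, because the paper's inequality already controls the full Hessian term $\int|u_{ij}|^{2}$, not merely the Kato defect.
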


\begin{proof}
Let us assume that $M$ has more than one end. As $M$ is parabolic, so is
each end. Then by \cite{N} and \cite{LT1} there exists a harmonic function $%
u:M\rightarrow \mathbb{R}$ such that 
\begin{equation*}
\lim_{x\rightarrow \infty, \ x\in E}u\left( x\right) =\infty \text{ \ and }%
\lim_{x\rightarrow \infty,\ x\in F}u\left( x\right) =-\infty ,
\end{equation*}%
where $E$ is an end of $M$ and $F=M\setminus E.$ Denote with 
\begin{equation*}
w\left( t\right) =\int_{l\left( t\right) }\left\vert \nabla u\right\vert
^{2},
\end{equation*}%
where $l\left( t\right) =\left\{ u=t\right\}.$ Denote also with $%
L(s,t)=\{s<u<t\}.$

Fix $s\in \mathbb{R}$ and let $\phi =\phi \left( t\right) $ be a $C^{2}$
function on $\left( s,\infty \right) $. Following \cite{MW}, we note that
for $t>s,$ 
\begin{eqnarray}
&&\left( \phi \left( t\right) w^{\prime }\left( t\right) -\phi ^{\prime
}\left( t\right) w\left( t\right) \right) -\left( \phi \left( s\right)
w^{\prime }(s)-\phi ^{\prime }\left( s\right) w\left( s\right) \right)
\label{b1} \\
&=&\int_{L\left( s,t\right) }\left( \phi \left( u\right) \Delta \left\vert
\nabla u\right\vert -\left\vert \nabla u\right\vert \Delta \phi \left(
u\right) \right)  \notag \\
&=&\int_{L\left( s,t\right) }\frac{\phi \left( u\right) }{\left\vert \nabla
u\right\vert }\left( \left\vert u_{ij}\right\vert ^{2}-\left\vert \nabla
\left\vert \nabla u\right\vert \right\vert ^{2}+\mathrm{Ric}\left( \nabla
u,\nabla u\right) \right)  \notag \\
&&-\int_{s}^{t}\phi ^{\prime \prime }\left( r\right) w\left( r\right) dr. 
\notag
\end{eqnarray}%
Since $H_{2}\left( M,\mathbb{Z}\right) $ contains no spherical classes, the
Gauss-Bonnet formula implies 
\begin{equation*}
\int_{l\left( r\right) }S_{r}\leq 0.
\end{equation*}
By Lemma \ref{RicS} and the fact that $S\geq 0,$ we get 
\begin{eqnarray*}
&&\int_{l\left( r\right) }\frac{1}{\left\vert \nabla u\right\vert ^{2}}%
\left( \left\vert u_{ij}\right\vert ^{2}-\left\vert \nabla \left\vert \nabla
u\right\vert \right\vert ^{2}+\mathrm{Ric}\left( \nabla u,\nabla u\right)
\right) \\
&=&\frac{1}{2}\int_{l\left( r\right) }\left( S-S_{r}+\frac{1}{\left\vert
\nabla u\right\vert ^{2}}\left\vert u_{ij}\right\vert ^{2}\right) \\
&\geq &\frac{1}{2}\int_{l\left( r\right) }\frac{1}{\left\vert \nabla
u\right\vert ^{2}}\left\vert u_{ij}\right\vert ^{2} \\
&\geq &\frac{3}{4}\int_{l\left( r\right) }\frac{1}{\left\vert \nabla
u\right\vert ^{2}}\left\vert \nabla \left\vert \nabla u\right\vert
\right\vert ^{2} \\
&\geq &\frac{3}{4}\frac{\left( w^{\prime }(r)\right) ^{2}}{w(r)},
\end{eqnarray*}%
where the last line follows from 
\begin{equation*}
|w^{\prime }|(r)\leq \int_{l(r)}|\nabla |\nabla u||\ \leq \left( \int_{l(r)}%
\frac{|\nabla |\nabla u||^{2}}{|\nabla u|^{2}}\right) ^{\frac{1}{2}}w(r)^{%
\frac{1}{2}}.
\end{equation*}%
Therefore, by (\ref{b1}) we conclude that 
\begin{eqnarray}
&&\left( \phi \left( t\right) w^{\prime }\left( t\right) -\phi ^{\prime
}\left( t\right) w\left( t\right) \right) -\left( \phi \left( s\right)
w^{\prime }(s)-\phi ^{\prime }\left( s\right) w\left( s\right) \right)
\label{ID} \\
&\geq &-\int_{s}^{t}\phi ^{\prime \prime }\left( r\right) w\left( r\right)
dr+\frac{3}{4}\int_{s}^{t}\phi \left( r\right) \frac{\left( w^{\prime
}\right) ^{2}}{w}\left( r\right) dr  \notag
\end{eqnarray}%
for any $C^{2}$ function $\phi \geq 0$ on $\left( s,\infty \right).$ So 
\begin{equation}
w^{\prime }(t)\geq w^{\prime }(s),  \label{b2}
\end{equation}%
for all $s<t$ by taking $\phi \left( r\right) =1.$ Letting $\phi
(r)=r^{-\alpha }$ for $\alpha >0$ and $r\geq 1,$ we obtain that 
\begin{eqnarray}
&&\left( t^{-\alpha }w^{\prime }\left( t\right) +\alpha t^{-\alpha
-1}w\left( t\right) \right) -\left( s^{-\alpha }w^{\prime }(s)+\alpha
s^{-\alpha -1}w\left( s\right) \right)  \label{b3} \\
&\geq &-\alpha (\alpha +1)\int_{s}^{t}r^{-\alpha -2}w\left( r\right) dr+%
\frac{3}{4}\int_{s}^{t}r^{-\alpha }\frac{\left( w^{\prime }\right) ^{2}}{w}%
\left( r\right) dr  \notag
\end{eqnarray}%
for all $1\leq s\leq t.$

We rewrite the elementary inequality 
\begin{equation*}
\frac{2\varepsilon }{r}w^{\prime }(r)\leq \frac{\left( w^{\prime }\right)
^{2}}{w}\left( r\right) +\frac{\varepsilon ^{2}}{r^{2}}w(r)
\end{equation*}%
into 
\begin{equation*}
\frac{3}{4}\int_{s}^{t}r^{-\alpha }\frac{\left( w^{\prime }\right) ^{2}}{w}%
\left( r\right) dr\geq \frac{3\varepsilon }{2}\int_{s}^{t}r^{-\alpha
-1}w^{\prime }(r)dr-\frac{3\varepsilon ^{2}}{4}\int_{s}^{t}r^{-\alpha
-2}w(r)dr.
\end{equation*}%
After integrating by parts the first term on the right side we get 
\begin{eqnarray*}
\frac{3}{4}\int_{s}^{t}r^{-\alpha }\frac{\left( w^{\prime }\right) ^{2}}{w}%
\left( r\right) dr &\geq &\frac{3\varepsilon }{2}\left( t^{-\alpha
-1}w(t)-s^{-\alpha -1}w(s)\right) \\
&+&\left( \frac{3\varepsilon }{2}(\alpha +1)-\frac{3\varepsilon ^{2}}{4}%
\right) \int_{s}^{t}r^{-\alpha -2}w(r)dr.
\end{eqnarray*}%
Plugging this inequality into (\ref{b3}) implies that 
\begin{eqnarray*}
&&\left( t^{-\alpha }w^{\prime }\left( t\right) -\left( \frac{3\varepsilon }{%
2}-\alpha \right) t^{-\alpha -1}w\left( t\right) \right) -\left( s^{-\alpha
}w^{\prime }(s)-\left( \frac{3\varepsilon }{2}-\alpha \right) s^{-\alpha
-1}w\left( s\right) \right) \\
&\geq &\left( \frac{3\varepsilon }{2}(\alpha +1)-\frac{3\varepsilon ^{2}}{4}%
-\alpha (\alpha +1)\right) \int_{s}^{t}r^{-\alpha -2}w\left( r\right) dr
\end{eqnarray*}%
for any $1\leq s\leq t.$ For $\alpha =\varepsilon =\frac{1}{4}$ it follows
that 
\begin{equation}
t^{-\frac{1}{4}}w^{\prime }\left( t\right) -{\frac{1}{8}}t^{-\frac{5}{4}%
}w\left( t\right) \geq s^{-\frac{1}{4}}w^{\prime }(s)-{\frac{1}{8}}s^{-\frac{%
5}{4}}w\left( s\right) .  \label{b5}
\end{equation}

As the Ricci curvature of $M$ is bounded from below by a constant, the
gradient estimate (\ref{CY}) implies 
\begin{equation*}
w(t)=\int_{l(t)}|\nabla u|^{2}\leq Ct\int_{l(t)}|\nabla u|=Ct
\end{equation*}%
for all $t\geq 1.$ In particular, there exists $t_{n}\rightarrow \infty $
such that $w^{\prime }(t_{n})\leq C.$ Letting $t=t_{n}$ in (\ref{b5}) and
making $n\rightarrow \infty $ we conclude that 
\begin{equation*}
s^{-\frac{1}{4}}w^{\prime }(s)\leq {\frac{1}{8}}s^{-\frac{5}{4}}w\left(
s\right) 
\end{equation*}%
for all $s\geq 1.$ Integrating this differential inequality from $s=1$ to $%
s=r\geq 1$ we conclude that 
\begin{equation}
w(r)\leq Cr^{\frac{1}{8}}  \label{b6}
\end{equation}%
for all $r\geq 1.$ A similar argument implies that 
\begin{equation*}
w(r)\leq C|r|^{\frac{1}{8}}
\end{equation*}%
for all $r\leq -1$ as well. Since $w$ is convex by (\ref{b2}), this
immediately implies that $w$ is a constant function. Therefore, (\ref{b2})
becomes an equality. Now tracing through the derivation of (\ref{b2}), one
sees that $u_{ij}=0$ and $S=0$. This proves the splitting.
\end{proof}

Recall that a nonparabolic manifold is called regular if the Green's
function $G\left( p,x\right) $ satisfies 
\begin{equation*}
\lim_{x\rightarrow \infty }G\left( p,x\right) =0.
\end{equation*}%
Similarly, a nonparabolic end $E$ of $M$ is called regular if $G$ is regular
on $E.$

\begin{theorem}
Let $\left( M,g\right) $ be a nonparabolic complete three-dimensional
manifold with $S\geq 0$ and Ricci curvature bounded below. Assume that $%
H_{2}\left( M,\mathbb{Z}\right) $ contains no spherical classes. If all
nonparabolic ends of $M$ are regular, then $M$ has one end.
\end{theorem}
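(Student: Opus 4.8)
The plan is to argue by contradiction: assuming $M$ has at least two ends, I would produce a nonconstant globally harmonic function $u$ on $M$ with compact level sets and then run the monotonicity machinery from the proof of Theorem \ref{A3}, using the regularity hypothesis to pin down the behavior of $u$ at infinity on a nonparabolic end. Since $M$ is nonparabolic it has a nonparabolic end, and by hypothesis every nonparabolic end of $M$ is regular. Following the Li-Tam \cite{LT1} and Nakai \cite{N} theory exactly as in the proofs of Theorems \ref{A2} and \ref{A3}, I would obtain a nonconstant \emph{positive} harmonic function $u$ on $M$ that is proper as a map onto an interval $I=(0,b)$, $0<b\le\infty$: one takes $u$ bounded (so $b<\infty$) when $M$ has at least two nonparabolic ends, and $u$ proper with $u\to+\infty$ along a parabolic end (so $b=\infty$) when $M$ has a single nonparabolic end together with a parabolic end. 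In either case the regularity of the nonparabolic ends is what forces $u$ to be proper with compact level sets and forces $0$ to be the value attained by $u$ at infinity along a nonparabolic end; in particular, for almost every $t\in I$ the level set $l(t)=\{u=t\}$ is a nonempty compact surface separating $M$ into two unbounded pieces. I expect this first step — correctly distributing the parabolic and nonparabolic ends on the two sides of $u$ and extracting compactness of the level sets from regularity — to be the main technical obstacle; the remaining steps are essentially verbatim copies of the argument in Theorem \ref{A3}.

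For the monotonicity, set $w(t)=\int_{l(t)}|\nabla u|^{2}$, so that the flux $c_{0}=\int_{l(t)}|\nabla u|$ is a positive constant independent of $t$ by harmonicity. On a regular level set no component can be a $2$-sphere: since $H_{2}(M,\mathbb{Z})$ contains no spherical classes, a spherical component would bound a bounded region, on which $u$ (harmonic with constant boundary value $t$) would be identically $t$, contradicting $|\nabla u|\neq 0$ on $l(t)$; hence Gauss-Bonnet gives $\int_{l(t)}S_{t}\le 0$. Combining this with Lemma \ref{RicS}, the hypotheses $S\ge 0$ and $\mathrm{Ric}$ bounded below, and the Kato inequality (\ref{Kato}), one obtains — exactly as in the derivation of (\ref{ID}) and (\ref{b2}) — that for every nonnegative $C^{2}$ function $\phi$ on $I$ and all $s<t$ in $I$,
\[
\big(\phi w'-\phi'w\big)(t)-\big(\phi w'-\phi'w\big)(s)\ \ge\ -\int_{s}^{t}\phi''\,w\ +\ \frac{3}{4}\int_{s}^{t}\phi\,\frac{(w')^{2}}{w},
\]
and in particular (taking $\phi\equiv 1$) that $w$ is convex on $I$. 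When $b=\infty$, applying the Cheng-Yau estimate (\ref{CY}) to the positive harmonic function $u$ gives $|\nabla u|\le Cu$, hence $w(t)\le C\,c_{0}\,t$, and feeding this linear bound into the inequality with $\phi(r)=r^{-1/4}$ and repeating the computation in (\ref{b3})--(\ref{b6}) yields the sublinear bound $w(r)\le Cr^{1/8}$.

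The final step is the endpoint analysis. By (\ref{CY}) applied to $u$, and — when $b<\infty$ — also to $b-u$, which is again positive and harmonic on $M$, one has $|\nabla u|\le C\min\{t,\,b-t\}$ on $l(t)$, hence $w(t)\le C\,c_{0}\min\{t,\,b-t\}$. If $b<\infty$ this gives $w(t)\to 0$ as $t\to 0^{+}$ and as $t\to b^{-}$, and a nonnegative convex function on $(0,b)$ vanishing at both endpoints is identically zero. If $b=\infty$, then $w$ is convex, nonnegative, $w(0^{+})=0$, and $w(r)=o(r)$ by the sublinear bound; were $w\not\equiv 0$, convexity together with $w(0^{+})=0$ would force $w$ to grow at least linearly, a contradiction, so again $w\equiv 0$. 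In either case $w\equiv 0$, hence $|\nabla u|\equiv 0$ and $u$ is constant, contradicting the construction. Therefore $M$ has exactly one end. Observe that no splitting alternative survives: the only possible split here, $M=\mathbb{R}\times\mathbb{T}^{2}$, is parabolic, and indeed the regular end is precisely what forces the constant value of $w$ appearing in the rigidity case of Theorem \ref{A3} to vanish.
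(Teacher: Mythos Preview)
Your proposal is correct and follows essentially the same strategy as the paper: construct a proper positive harmonic function $u$, derive the differential inequality (\ref{ID}) for $w(t)=\int_{l(t)}|\nabla u|^{2}$ via Lemma~\ref{RicS}, Gauss--Bonnet and Kato, and use the Cheng--Yau gradient estimate at the endpoints of the range to force $w\equiv 0$. Two small remarks: the paper's case split is ``at least one parabolic end'' versus ``all ends nonparabolic'', which sidesteps the ambiguity you flag (your bounded-$u$ case could fail to be proper if parabolic ends coexist with $\geq 2$ nonparabolic ends); and in the bounded case the paper chooses $\phi(r)=r$ in (\ref{ID}) to obtain monotonicity of $w(t)/t$, whereas your argument---convex, nonnegative, vanishing at both endpoints---is a bit more direct.
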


\begin{proof}
Suppose first that $M$ has at least one parabolic end. We let $F$ be the
union of all parabolic ends and $E=M\setminus F$. Then $E$ is nonparabolic.
Again, by \cite{N, LT1}, there exists a harmonic function $u:M\rightarrow
\left( 0,\infty \right) $ such that 
\begin{eqnarray*}
\lim_{x\rightarrow \infty ,x\in E}u\left( x\right)  &=&0 \\
\lim_{x\rightarrow \infty ,x\in F}u\left( x\right)  &=&\infty .
\end{eqnarray*}%
According to (\ref{b2}) we have $w^{\prime }\left( r\right) \geq w^{\prime
}\left( s\right) $ for all $r>s>0,$ whereas (\ref{b6}) implies $w^{\prime
}\left( r_{n}\right) \rightarrow 0$ for some sequence $r_{n}\rightarrow
\infty .$ Hence, $w^{\prime }\left( s\right) \leq 0$ for all $s>0.$
Therefore, $w\left( r\right) \leq w\left( \varepsilon \right) $ for $%
r>\varepsilon .$ Letting $\varepsilon \rightarrow 0$ implies $w=0.$ This
contradiction shows that all ends of $M$ must be nonparabolic.

Assume now that $M$ has a nonparabolic end $E$ and let $F=M\setminus E.$
Since both $E$ and $F$ are regular, there exists a harmonic function $%
u:M\rightarrow \left( 0,1\right) $ such that 
\begin{equation*}
\lim_{x\rightarrow \infty ,x\in E}u\left( x\right) =0\text{ \ and }%
\lim_{x\rightarrow \infty ,x\in F}u\left( x\right) =1.
\end{equation*}%
For $\phi \left( r\right) =r$ in (\ref{ID}) we obtain 
\begin{equation}
tw^{\prime }\left( t\right) -w\left( t\right) \geq sw^{\prime }\left(
s\right) -w\left( s\right) .  \label{b7}
\end{equation}%
However, the gradient estimate implies that $0<w\left( s\right) \leq Cs.$ In
particular, there exists a sequence $s_{n}\rightarrow 0$ such that 
\begin{equation*}
\liminf_{n \rightarrow \infty} w^{\prime }\left( s_{n}\right) \geq 0.
\end{equation*}%
Hence, (\ref{b7}) implies that $tw^{\prime }\left( t\right) -w\left(
t\right) \geq 0$ for all $t\in \left( 0,1\right) .$ Integrating in $t$ we
get 
\begin{equation}
\frac{w\left( t\right) }{t}\geq \frac{w\left( s\right) }{s}  \label{b8}
\end{equation}%
for all $0<s<t<1.$ However, the gradient estimate implies that $w\left(
t\right) \leq C\left( 1-t\right) $ for $t<1.$ After letting $t\rightarrow 1$
in (\ref{b8}) we conclude that $w=0.$ This contradiction shows that $M$ has
only one end.
\end{proof}


\begin{thebibliography}{99}
%\bibitem{AFM} V. Agostiniani, M. Fogagnolo and L. Mazzieri, Sharp geometric
%inequalities for closed hyper-surfaces in manifolds with nonnegative Ricci
%curvature, Invent. Math. 222 (2020), 1033--1101.

\bibitem{AMO}V. Agostiniani, L. Mazzieri, and F. Oronzio, 
A Green's Function Proof of the Positive Mass Theorem,
Comm. Math. Phys. 405 (2024), no.2, Paper No. 54.
 
\bibitem{BKKS} H. Bray, D. Kazaras, M. Khuri and D. Stern, Harmonic
functions and the mass of 3-dimensional asymptotically flat Riemannian
manifolds, J. Geom. Anal., 32 (2022), no. 6, Art. 184, 29 pp.

\bibitem{HKK} S. Hirsch, D. Kazaras, and M. Khuri, Spacetime harmonic functions and the mass of 3-dimensional
asymptotically flat initial data for the Einstein equations, J. Differential Geom. 122 (2022), 223--258.

\bibitem{B} R. Brooks, The fundamental group and the spectrum of the
Laplacian, Comment. Math. Helv. 56 (1981), no. 4, 581--598.

\bibitem{CG} J. Cheeger and D. Gromoll, The splitting theorem for manifolds
of nonnegative Ricci curvature, J. Differential Geometry 6 (1971), 119--128.

\bibitem{C} S.Y. Cheng, Eigenvalue comparison theorems and its geometric
applications, Math. Z. 143 (1975), 289--297.

\bibitem{CY0} S.Y. Cheng and S.T. Yau, Differential equations on Riemannian
manifolds and their geometric applications, Comm. Pure Appl. Math. 28
(1975), 333--354.
 
\bibitem{GL} M. Gromov and H. B. Lawson, Spin and scalar curvature in the
presence of a fundamental group. I, Ann. of Math. (2) 111 (1980), no. 2,
209--230.

\bibitem{HKKZ} S. Hirsch, D. Kazaras, M. Khuri, and Y. Zhang, Spectral torical band inequalities and generalizations of the Schoen-Yau black hole existence theorem, Int. Math. Res. Not. (2024), no. 4, 3139--3175.

\bibitem{J} J. Jezierski, Positivity of mass for certain spacetimes with
horizons, Classical Quantum Gravity 6 (1989), no. 11, 1535--1539.

\bibitem{JK} J. Jezierski and J. Kijowski, Positivity of total energy in
general relativity, Phys. Rev. D (3) 36 (1987), no. 4, 1041--1044.

 
\bibitem{LT1} P. Li and L.F. Tam, Harmonic functions and the structure of
complete manifolds, J. Differential Geom. 35 (1992), 359--383.
 
\bibitem{LW1} P. Li and J. Wang, Complete manifolds with positive spectrum,
J. Differential Geom. 58 (2001), 501--534.

\bibitem{LW2} P. Li and J. Wang, Complete manifolds with positive spectrum,
II, J. Diffeerential Geom. 62 (2002), 143--162.

\bibitem{LW} P. Li and J. Wang, Weighted Poincar\'{e} inequality and
rigidity of complete manifolds, Ann. Sci. Ecole Norm. Sup. 39 (2006),
921--982.
 
\bibitem{M} O. Munteanu, A sharp estimate for the bottom of the spectrum of
the Laplacian on K\"{a}hler manifolds, J. Differential Geom. 83 (2009),
163--187.

\bibitem{MW} O. Munteanu and J. Wang, Comparison Theorems for 3D Manifolds
With Scalar Curvature Bound, Int. Math. Res. Not. (2023), no 3, 2215--2242.

\bibitem{MW1} O. Munteanu and J. Wang, Geometry of three-dimensional
manifolds with positive scalar curvature, 2023, preprint.
 
\bibitem{N} M. Nakai, On Evans potential, Proc. Japan. Acad. 38, 624--629
(1962).

 
\bibitem{SY} R. Schoen and S.T. Yau, Existence of incompressible minimal
surfaces and the topology of three-manifolds of positive scalar curvature,
Ann. Math. 110 (1979), 127--142.

\bibitem{SY1} R. Schoen and S.T. Yau, On the structure of manifolds with
positive scalar curvature, Manuscripta Mathematica 28 (1979), 159--184.

 
\bibitem{SY94} R. Schoen and S.T. Yau, Lectures on Differential Geometry,
International Press, 1994
 
\bibitem{S} D. Stern, Scalar curvature and harmonic maps to $S^{1}$, J. Differential Geom. 122 (2022), 259--269.
 \end{thebibliography}
\end{document}